\newcommand\CA{{\mathcal A}} 
\newcommand\CC{{\mathcal C}}
\newcommand\BBC{{\mathbb C}}
\newcommand\BBR{{\mathbb R}}
\newcommand\codim{\operatorname{codim}}
\newcommand\Fix{{\operatorname{Fix}}}
\newcommand\Ind{{\operatorname{Ind}}}
\newcommand\GL{\operatorname{GL}}
\newcommand\rad{{\operatorname{rad}}}
\newcommand\rk{{\operatorname{rk}}}
\newcommand\U{\operatorname{U}}
\newcommand\inverse{^{-1}}
\renewcommand\th{{^{\text{th}}}}
\newcommand\sh{{\operatorname{sh}}}
\numberwithin{equation}{section}
\theoremstyle{plain}
\newtheorem{lemma}[equation]{Lemma}
\newtheorem{theorem}[equation]{Theorem}
\newtheorem{conjecture}[equation]{Conjecture}
\newtheorem{corollary}[equation]{Corollary}
\newtheorem{proposition}[equation]{Proposition}
\thanks{The authors would like to thank their charming wives for their
  unwavering support during the preparation of this paper.}
\subjclass[2010]{Primary 20F55; Secondary 05E10, 52C35}
\begin{document}

%%%%%%%%%%%%%%%%%%%%%%%%%%%%%%%%%%%%%%%%%%%%%%%%%%%%%%%%%%%%%%%%%%%%%%
%%%%%%%%%%%%% top matter stuff
%%%%%%%%%%%%%%%%%%%%%%%%%%%%%%%%%%%%%%%%%%%%%%%%%%%%%%%%%%%%%%%%%%%%%%
\title [Coxeter arrangements and Solomon's descent algebra] {Cohomology of
  Coxeter arrangements and Solomon's descent algebra}

%\dedicatory{}

%%% first author info
\author[J.M. Douglass]{J. Matthew Douglass} \address{Department of
  Mathematics, University of North Texas, Denton TX, USA 76203}
\email{douglass@unt.edu} \urladdr{http://hilbert.math.unt.edu}

%%% second author info
\author[G. Pfeiffer]{G\"otz Pfeiffer} \address{School of Mathematics,
  Statistics and Applied Mathematics, National University of Ireland
  Galway, Galway, Ireland}
\email{goetz.pfeiffer@nuigalway.ie}
\urladdr{http://schmidt.nuigalway.ie/~goetz}

%%% third author info
\author[G. R\"ohrle]{Gerhard R\"ohrle} \address {Fakult\"at f\"ur
  Mathematik, Ruhr-Universit\"at Bochum, D-44780 Bochum, Germany}
\email{gerhard.roehrle@rub.de}
\urladdr{http://www.ruhr-uni-bochum.de/ffm/Lehrstuehle/Lehrstuhl-VI}

%%\date{\today}

\begin{abstract}
  We refine a conjecture by Lehrer and Solomon on the structure of the
  Orlik-Solomon algebra of a finite Coxeter group $W$ and relate it to the
  descent algebra of $W$.  As a result, we claim that both the group algebra
  of $W$, and the Orlik-Solomon algebra of $W$ can be decomposed into a sum
  of induced one-dimensional representations of element centralizers, one
  for each conjugacy class of elements of $W$.  We give a uniform proof of
  the claim for symmetric groups. In addition, we prove that a relative
  version of the conjecture holds for every pair $(W, W_L)$, where $W$ is
  arbitrary and $W_L$ is a parabolic subgroup of $W$, all of whose
  irreducible factors are of type $A$.
\end{abstract}

\maketitle
\setcounter{tocdepth}{1}
\tableofcontents

%%%%%%%%%%%%%%%%%%%%%%%%%%%%%%%%%%%%%%%%%%%%%%%%%%%%%%%%%%%%%%%%%%%%%%
%%%%%%%%%%%%% article body...
%%%%%%%%%%%%%%%%%%%%%%%%%%%%%%%%%%%%%%%%%%%%%%%%%%%%%%%%%%%%%%%%%%%%%%

%%%%%%%%%%%%%%%%%%%%%%%%%%%%%%%%%%%%%%%%%%%%%%%%%%%%%%%%%%%%%%%%%%%%%%
%%%%%%%%%%%%% \S1 Introduction
%%%%%%%%%%%%%%%%%%%%%%%%%%%%%%%%%%%%%%%%%%%%%%%%%%%%%%%%%%%%%%%%%%%%%%
\section{Introduction}

Suppose $V$ is a finite-dimensional, complex vector space. A linear
transformation $t$ in $\GL(V)$ is called a \emph{reflection} if it has
finite order and the fixed point set of $t$ is a hyperplane in $V$, or
equivalently, if $t$ is diagonalizable with eigenvalues $1$, with
multiplicity $\dim V-1$, and $\zeta$, where $\zeta$ is a root of unity, with
multiplicity $1$. Suppose that $W$ is a finite subgroup of $\GL(V)$
generated by a set of reflections $T$. For each $t$ in $T$, let $H_t=
\Fix(t)$ denote the fixed point set of $t$ in $V$ and set $\CA=\{\, H_t\mid
t\in T\,\}$ and $M_W= V\setminus \cup_{t\in T} H_t$. Then $(V, \CA)$ is a
hyperplane arrangement, and the complement $M_W$ is an open, $W$-stable
subset of $V$.

The action of $W$ on $M_W$ determines a representation of $W$ on the
singular cohomology of $M_W$. For $p\geq0$ let $H^p(M_W)$ denote the $p\th$
singular cohomology space of $M_W$ with complex coefficients and let
$H^*(M_W)= \bigoplus_{p\geq 0} H^p(M_W)$ denote the total cohomology of
$M_W$, a graded $\BBC$-vector space. It follows from a result of Brieskorn
\cite{brieskorn:tresses} that $\dim H^*(M_W)= |W|$ and so a naive guess
would be that the representation of $W$ on $H^*(M_W)$ is the regular
representation of $W$. A simple computation for the symmetric group $S_3$
shows that this is not the case.

In 1987, Lehrer \cite{lehrer:poincare} determined the character of the
representation of $W$ on $H^*(M_W)$ when $W=S_n$ is a symmetric group by
explicitly computing the ``equivariant Poincar\'e polynomials''
$P_{S_n}(g,t)= \sum_{p\geq0} \operatorname{trace}(g, H^p(M_W))t^p$, for $g$
in $S_n$ (here, $t$ is an indeterminate). Subsequently, equivariant
Poincar\'e polynomials were computed case by case for other groups by
various authors. In 2001, Blair and Lehrer \cite{blairlehrer:cohomology}
showed that for any complex reflection group, the equivariant Poincar\'e
polynomials have the form $P_W(g,t)= \sum_{x\in Z_W(g)} f_g(x) (-t)^{c(x)}$
where $f_g\colon Z_G(g)\to \BBC$ is explicitly given and $c(x)$ is the
codimension of the fixed point space of $x$ in $V$.  Felder and Veselov
\cite{felderveselov:coxeter} have found an elegant description of the
character of $H^*(M_W)$ when $W$ is a Coxeter group that precisely describes
how the character of $H^*(M_W)$ differs from the regular character $\rho$ of
$W$. Specifically, Felder and Veselov show that the character of $H^*(M_W)$
is given as
\[
\sum_{t} (2\,\Ind_{\langle t \rangle}^W (1) -\rho),
\]
where the sum runs over a set of ``special'' involutions $t$ in $W$.

In contrast, while the representation of $W$ on $H^*(M_W)$ is
well-understood, much less is known about the representations of $W$ on the
individual graded pieces $H^p(M_W)$ for $p\geq0$. When $W$ is a symmetric
group, Lehrer and Solomon \cite{lehrersolomon:symmetric} have described these
representations as sums of representations induced from linear characters of
centralizers of elements in $W$. They conjecture that a similar
decomposition exists in general.

For the symmetric group $S_n$, Barcelo and Bergeron
\cite{barcelobergeron:orlik-solomon} construct an explicit $S_n$-stable
subspace of the exterior algebra of the free Lie algebra on $n$ letters that
affords the character of $H^*(M_W)$ tensored with the sign character.  Their
construction could be used to study the characters of the individual
cohomology spaces $H^p(M_W)$.

For the hyperoctahedral group $W(B_n)$, Douglass \cite{douglass:cohomology}
extended the construction of Lehrer and Solomon and expressed each
$H^p(M_W)$ as a sum of representations induced from linear characters of
subgroups. However, the subgroups appearing are not always centralizers of
elements of $W(B_n)$.  At the same time, Bergeron used the free Lie algebra
on $2n$ letters to construct a representation of $W(B_n)$ analogous to the
representation of $S_n$ constructed in
\cite{barcelobergeron:orlik-solomon}. The character of this representation
of $W(B_n)$ is again the character of $H^*(M_W)$ tensored with the sign
character. He then uses this construction to study the characters of the
individual cohomology spaces $H^p(M_W)$.

In this paper we state a conjecture for a finite Coxeter group $W$
(Conjecture~\ref{conj}) that both refines the conjecture of Lehrer and
Solomon \cite[Conjecture 1.6]{lehrersolomon:symmetric} and directly relates
the representation of $W$ on $H^p(M_W)$ to a subrepresentation of the right
regular representation of $W$. It is straightforward to see that
Conjecture~\ref{conj} holds for $W$ if and only if it holds for every
irreducible factor of $W$. Thus, to prove the conjecture we may assume that
$W$ is irreducible. Conjecture~\ref{conj} is proved for symmetric groups in
this paper (Theorem~\ref{thm:main}). The conjecture has been proved for all
rank two Coxeter groups \cite{douglasspfeifferroehrle:inductive} and has
been checked using the computer algebra system {\sf GAP}~\cite{gap3} and the
package {\sf CHEVIE}~\cite{chevie} for all Coxeter groups with rank six or
less (\cite{bishopdouglasspfeifferroehrle:computations},
\cite{bishopdouglasspfeifferroehrle:computationsII}). More generally, in
\S\ref{sec:rel} we extend the constructions used in the proof of
Theorem~\ref{thm:main} and prove a ``relative'' version of
Conjecture~\ref{conj} for pairs $(W, W_L)$, where now $W$ is any finite
Coxeter group and $W_L$ is a parabolic subgroup of $W$, all of whose
irreducible factors are of type $A$. If the conclusion of
Theorem~\ref{thm:rel} were to hold for every parabolic subgroup $W$, not
just those that are products of symmetric groups, then Conjecture~\ref{conj}
would hold for $W$.  The statement of the conjecture, along with an
expository review of the background material from the theories of Coxeter
groups and hyperplane arrangements we use later in the paper, is given in
\S\ref{sec:expl}.

The subrepresentations of the right regular representation of $W$ that we
consider arise from a decomposition of a subalgebra of the group algebra of
$W$, known as ``Solomon's descent algebra,'' into projective, indecomposable
modules. Projective, indecomposable modules in an artinian $\BBC$-algebra
are generated by idempotents. The idempotents in the descent algebra we use
in this paper were discovered by Bergeron, Bergeron, Howlett, and Taylor
\cite{bergeronbergeronhowletttaylor:decomposition}; we call them \emph{BBHT
  idempotents.} In \S\ref{prelim} we study the relationships between BBHT
idempotents for $W$ and BBHT idempotents for parabolic subgroups. We also
compare the BBHT idempotents for $W$ with those of its irreducible factors
when $W$ is reducible. In \S\ref{induced} we show that the right ideals in
the group algebra of $W$ generated by BBHT idempotents afford induced
representations. It then follows that the BBHT idempotents give rise to a
decomposition of the group algebra that is the direct analog of the
decomposition of $H^*(M_W)$ given by Brieskorn's Lemma \cite[Lemme
3]{brieskorn:tresses}.

The proof of Conjecture~\ref{conj} for symmetric groups is given in
\S\ref{lambda=n} and \S\ref{arbitrary}. As a consequence, we obtain a
decomposition of the group algebra $\BBC S_n$ as a direct sum of
representations induced from one-dimensional representations of
centralizers, one for each conjugacy class. Similar decompositions of the
group algebra of the symmetric group have been proved independently by
Bergeron, Bergeron, and Garsia \cite{bergeronbergerongarsia:idempotents},
Hanlon \cite{hanlon:action}, and more recently, Schocker
\cite{schocker:hoheren}, all using different methods.

For readers familiar with the literature on the free Lie algebra and its
connections with the descent algebras and group algebras of symmetric
groups, Theorems~\ref{top}(a) and~\ref{thm:main}(a) will seem familiar.
Indeed, Theorem~\ref{top}(a) was likely known in some form to
Brandt~\cite{brandt:free}, Wever~\cite{wever:ueber}, and
Klyachko~\cite{klyachko:lie}, and a proof of Theorem~\ref{thm:main}(a) can
be extracted from results in~\cite[\S4]{bergeronbergerongarsia:idempotents},
\cite[Theorem 8.24]{reutenauer:free},
and~\cite[\S4]{garsiareutenauer:decomposition}.  In contrast with these
references, where the methods are combinatorial and the emphasis is on the
connections between the group algebra $\BBC S_n$ and the free Lie algebra on
$n$ letters, our methods are mainly group- and representation-theoretic,
using the theory of Coxeter groups in conjunction with special features of
symmetric groups, and our focus is on the connections between the group
algebra $\BBC S_n$ and the representation of $S_n$ on the cohomology spaces
$H^p(M_W)$. Moreover, our line of reasoning, which is motivated by the
arguments in Lehrer and Solomon \cite{lehrersolomon:symmetric}, demonstrates
a striking parallelism between the Orlik-Solomon algebras and group algebras
of symmetric groups that to our knowledge has not been observed before. We
hope that the approach taken in this paper will lead to a deeper
understanding of the topological and geometric properties of general Coxeter
arrangements.  For example, as we show in \S\ref{sec:rel}, the constructions
in~\S\ref{lambda=n} and~\S\ref{arbitrary} have natural extensions when the
focus is shifted from considering not just a single symmetric group to
considering a product of symmetric groups embedded as a parabolic subgroup
in a larger finite Coxeter group $W$, where the type of $W$ is arbitrary.

Bergeron and Bergeron conjecture in \cite{bergeronbergeron:orthogonal} that
there might be a decomposition of the group algebra $\BBC W(B_n)$ analogous
to the decomposition of $\BBC S_n$ studied by Bergeron-Bergeron-Garsia
\cite{bergeronbergerongarsia:idempotents} and Hanlon
\cite{hanlon:action}. In \cite{bergeron:hochschild} Bergeron gives a
decomposition of the group algebra of a hyperoctahedral group as a direct
sum of representations induced from linear characters of subgroups. However,
this decomposition is not the decomposition proposed in
Conjecture~\ref{conj}; it is the analog for group algebras of
hyperoctahedral groups of the decomposition of $H^*(M_W)$ found in
\cite{douglass:cohomology}.

%%%%%%%%%%%%%%%%%%%%%%%%%%%%%%%%%%%%%%%%%%%%%%%%%%%%%%%%%%%%%%%%%%%%%%
%%%%%%%%%%%%% \S2 The Orlik Solomon algebra and Solomon's descent algebra
%%%%%%%%%%%%%%%%%%%%%%%%%%%%%%%%%%%%%%%%%%%%%%%%%%%%%%%%%%%%%%%%%%%%%%
\section{The Orlik-Solomon algebra and Solomon's descent algebra}
\label{sec:expl} 

In the rest of this paper we assume that $V$ is a finite-dimensional,
complex vector space and $W\subseteq \GL(V)$ is a finite Coxeter group with
Coxeter generating set $S$. Then each $s$ in $S$ acts on $V$ as a reflection
with order two, and $W$ is generated by $S$ subject to the relations
$(st)^{m_{s,t}}=1$, where $m_{s,s}=1$ and $m_{s,t}= m_{t,s}>1$ for $s\ne t$
in $S$. Let $T$ denote the set of all reflections in $W$.

We assume also that a positive, definite, Hermitian form
$\langle\,\cdot\,,\, \cdot\,\rangle$ on $V$ is given and that $W$ is a
subgroup of the unitary group of $V$ with respect to this form. It is known
that $\Fix(W)^\perp$, the orthogonal complement of the space of fixed points
of $W$ on $V$, has a basis $\Delta=\{\, \alpha_s\mid s\in S\,\}$ so that
$\langle \alpha_s, \alpha_t\rangle= -\cos (\pi/m_{s,t})$ for $s$ and $t$ in
$S$.  Then $s$ acts on $V$ as the reflection through the hyperplane
orthogonal to $\alpha_s$, and $\Phi=\{\, w(\alpha_s)\mid w\in W, s\in S\,\}$
is a root system in $\Fix(W)^\perp$ with base $\Delta$.

%%%%%%%%%%%%%%%%%%%%%%%%%%%%%%%%%%%%%%%%%%%%%%%%%%%%%%%%%%%%%%%%%%%%%%
\subsection{Shapes and conjugacy classes} \label{subsect:conj} 

We begin by recalling a parameterization of the conjugacy classes in $W$ due
to Geck and Pfeiffer (see \cite[\S3.2]{geckpfeiffer:characters}) in a form
compatible with the arrangement $(V, \CA)$ of $W$.

The \emph{lattice of $\CA$}, denoted by $L(\CA)$, is the set of subspaces of
$V$ that arise as intersections of hyperplanes in $\CA$:
\[
L(\CA)=\{\, H_{t_1} \cap H_{t_2} \cap \dotsm \cap H_{t_p}\mid t_1, t_2,
\dots, t_p\in T\,\}.
\]
For $X$ in $L(\CA)$ define
\[
W_X= \{\, w\in W\mid X\subseteq \Fix(w)\,\}
\]
to be the pointwise stabilizer of $X$ in $W$. It follows from Steinberg's
Theorem \cite{steinberg:differential} that $W_X$ is generated by $\{ t\in
T\mid X\subseteq H_t\,\}$. It then follows that $X=\Fix(W_X)$, and so the
assignment $X\mapsto W_X$ defines an injection from $L(\CA)$ to the set of
subgroups of $W$. Notice that $W_X$ is again a Coxeter group.

The action of $W$ on $\CA$ induces an action of $W$ on $L(\CA)$. Obviously,
$wW_Xw\inverse= W_{w(X)}$ and so for $X$ and $Y$ in $L(\CA)$, the subgroups
$W_X$ and $W_Y$ are conjugate if and only if $X$ and $Y$ lie in the same
$W$-orbit. Thus, the assignment $X\mapsto W_X$ induces a bijection between
the set of orbits of $W$ on $L(\CA)$ and the set of conjugacy classes of
subgroups $W_X$.

By a \emph{shape of $W$} we mean a $W$-orbit in $L(\CA)$. We denote the set
of shapes of $W$ by $\Lambda$. For example, if $W$ is the symmetric group
$S_n$, then $\Lambda$ is in bijection with the set of partitions of $n$ and
with the set of Young diagrams with $n$ boxes. When $\lambda$ is a shape and
$X$ is a subspace in $\lambda$, we say \emph{$\lambda$ is the shape of
  $W_X$.}

It is shown in \cite[\S6.2] {orlikterao:arrangements} that the assignment
$w\mapsto \Fix(w)$ defines a surjection from $W$ to $L(\CA)$.  Composing
with the map that sends an element $X$ in $L(\CA)$ to its $W$-orbit, we get
a map
\[
\sh\colon W\to \Lambda.
\]
We say $\sh(w)$ is the \emph{shape of $w$}. Thus, $\sh(w)$ is the $W$-orbit
of $\Fix(w)$ in $L(\CA)$. Clearly, $\sh$ is constant on conjugacy classes
and so we can define the \emph{shape} of a conjugacy class to be the shape
of any of its elements.

An element $w$ in $W$, or its conjugacy class, is called \emph{cuspidal} if
$\Fix(w)=\Fix(W)$. For example, if $W$ is the symmetric group $S_n$, then
the conjugacy class consisting of $n$-cycles is the only cuspidal class. In
general, there is more than one cuspidal conjugacy class.  Cuspidal elements
and conjugacy classes are called \emph{elliptic} by some authors.

Suppose that $\lambda$ is a shape, $X$ in $L(\CA)$ has shape $\lambda$, and
$C$ is a conjugacy class in $W$ with shape $\lambda$. If $w$ is in $C$, then
$\Fix(w)$ is in the $W$-orbit of $X$ and so $C\cap W_X$ is a non-empty union
of cuspidal $W_X$-conjugacy classes. Geck and Pfeiffer
\cite[\S3.2]{geckpfeiffer:characters} have shown that in fact $C\cap W_X$ is
a single $W_X$-conjugacy class. It follows that $C\mapsto C\cap W_X$ defines
a bijection between the set of conjugacy classes in $W$ with shape $\lambda$
and the set of cuspidal conjugacy classes in $W_X$.

Fix a set $\{X_\lambda \mid \lambda\in \Lambda\,\}$ of $W$-orbit
representatives in $L(\CA)$. Summarizing the preceding discussion we see
that conjugacy classes in $W$ are parametrized by pairs $(\lambda,
C_\lambda)$, where $\lambda$ is a shape and $C_\lambda$ is a cuspidal
conjugacy class in $W_{X_\lambda}$.

%%%%%%%%%%%%%%%%%%%%%%%%%%%%%%%%%%%%%%%%%%%%%%%%%%%%%%%%%%%%%%%%%%%%%%
\subsection{The Orlik-Solomon algebra}\label{sec:os}
Next, consider the cohomology ring $H^*(M_W)$. Arnold and Brieskorn (see
\cite[\S1.1]{orlikterao:arrangements}) have computed $H^*(M_W)$. In the
following we use the presentation of this algebra given by Orlik and Solomon
\cite{orliksolomon:combinatorics}.

Recall that the set $T$ of reflections in $W$ parametrizes the hyperplanes
in $\CA$. The \emph{Orlik-Solomon algebra of $W$} is the $\BBC$-algebra, $A=
A(\CA)$, with generators $\{\, a_t\mid t\in T\,\}$ and relations
\begin{itemize}
\item $a_{t_1} a_{t_2}= -a_{t_2} a_{t_1}$ for $t_1$ and $t_2$ in $T$ and
\item $\sum_{i=1}^p (-1)^i a_{t_1} \dotsm \widehat{ a_{t_i}} \dotsm a_{t_p}
  =0$ whenever $\{\, H_{t_1}, \dots, H_{t_p} \,\}$ is a linearly dependent
  subset of $\CA$.
\end{itemize}
The algebra $A$ is a skew-commutative, graded, connected $\BBC$-algebra that
is isomorphic as a graded algebra to $H^*(M_W)$. Let $A^p$ denote the degree
$p$ subspace of $A$. Then
\begin{itemize}
\item $A^0\cong \BBC$, 
\item for $1\leq p\leq |S|$ the subspace $A^p$ is spanned by the set of all
  $a_{t_1} \dotsm a_{t_p}$, where $\codim H_{t_1}\cap \dots\cap H_{t_p}= p$,
  and
\item $A^p=0$ for $p>|S|$.
\end{itemize}
See \cite[\S3.1]{orlikterao:arrangements} for details.

The action of $W$ on $\CA$ extends to an action of $W$ on $A$ as algebra
automorphisms. An element $w$ in $W$ acts on a generator $a_t$ of $A$ by
$wa_t= a_{wtw\inverse}$. With this $W$-action $A$ is isomorphic to
$H^*(M_W)$ as graded $W$-algebras.

Orlik and Solomon \cite{orliksolomon:coxeter} have shown that the normalizer
of $W_X$ in $W$ is the setwise stabilizer of $X$ in $W$, that is,
\[
N_W(W_X)= \{\, w\in W\mid w(X)=X\,\}.
\]
For $X$ in $L(\CA)$, define $A_X$ to be the span of $\{\, a_{t_1} \dotsm
a_{t_r} \mid H_{t_1}\cap \dots\cap H_{t_r} =X\,\}$. Proofs of the following
statements may be found in \cite[Corollary 3.27 and Theorem
6.27]{orlikterao:arrangements}.
\begin{itemize}
\item If $\codim X=p$, then $A_X\subseteq A^p$.
\item There are vector space decompositions $A^p \cong \bigoplus_{\codim
    X=p} A_X$ and $A\cong \bigoplus_{X\in L(\CA)} A_X$.
\item For $w$ in $W$, $wA_X=A_{w(X)}$. Thus, $A_X$ is an $N_W(W_X)$-stable
  subspace of $A$.
\end{itemize}

For a shape $\lambda$ in $\Lambda$, set $A_\lambda= \bigoplus_{X\in \lambda}
A_X$. Suppose $X$ is a fixed subspace in $\lambda$ and that $\codim
X=p$. Then $A_\lambda$ is a $W$-stable subspace of $A^p$ and we have
isomorphisms of $\BBC W$-modules
\[
A_\lambda \cong \Ind_{N_W(W_X)}^W (A_X) \quad\text{and} \qquad A\cong
\bigoplus_{\lambda \in \Lambda} A_\lambda
\]
(see \cite{lehrersolomon:symmetric}).

%%%%%%%%%%%%%%%%%%%%%%%%%%%%%%%%%%%%%%%%%%%%%%%%%%%%%%%%%%%%%%%%%%%%%%
\subsection{Solomon's descent algebra}\label{descent}
In contrast with the Orlik-Solomon algebra $A$, which is defined for every
complex reflection group, Solomon's descent algebra is defined using the
Coxeter generating set $S$ of $W$ and so has no immediate analog for complex
reflection groups that are not Coxeter groups.

Suppose that $I$ is a subset of $S$. Define
\[
X_I=\bigcap_{s\in I} H_s\qquad \text{and} \qquad W_I =\langle I\rangle.
\]
Then $X_I$ is in $L(\CA)$ and $\codim X_I= |I|$. It follows from Steinberg's
Theorem \cite{steinberg:differential} that $W_I=W_{X_I}$.  Recall that
$\Delta=\{\, \alpha_s\mid s\in S\,\}$ is a base of $\Phi$.  For $I\subseteq
S$ define
\[
\Delta_I=\{\, \alpha_s\mid s\in I\,\}.
\]
Then $X_I$ is the orthogonal complement of the span of $\Delta_I$.

Orlik and Solomon (see \cite[\S6.2] {orlikterao:arrangements}) have shown
that each orbit of $W$ on $L(\CA)$ contains a subspace $X_I$ for some subset
$I$ of $S$. For subsets $I$ and $J$ of $S$ define $I\sim J$ if there is a
$w$ in $W$ with $w(\Delta_I)=\Delta_J$. Then $\sim$ is an equivalence
relation. It is well-known that $W_I$ and $W_J$ are conjugate if and only if
$I\sim J$ (see \cite{solomon:mackey}). It follows that the assignment
$I\mapsto X_I$ induces a bijection between the set of $\sim$-equivalence
classes of subsets of $S$ and the set of shapes of $W$. We use this
bijection to identify shapes of $W$ with subsets of the power set of $S$.

Next, let $\ell$ denote the length function of $W$ determined by the
generating set $S$ and define
\[
W^I=\{\, w\in W\mid \ell(ws)> \ell(w)\,\forall\,s\in I\,\}.
\]
Then $W^I$ is a set of left coset representatives of $W_I$ in $W$. Also,
define
\[
x_I=\sum_{ w\in W^I} w
\]
in the group algebra $\BBC W$. Solomon \cite{solomon:decomposition} has
shown that the span of $\{\, x_I\mid I\subseteq S\,\}$ is in fact a
subalgebra of $\BBC W$. This subalgebra is denoted by $\Sigma(W)$ and is
called the \emph{descent algebra} of $W$. It is not hard to see that $\{\,
x_I\mid I\subseteq S\,\}$ is linearly independent and so $\dim
\Sigma(W)=2^{|S|}$. Notice that $x_S=1$ is the identity in both $\BBC W$ and
its subalgebra $\Sigma(W)$.

Bergeron, Bergeron, Howlett, and Taylor
\cite{bergeronbergeronhowletttaylor:decomposition} have defined a basis
$\{\, e_I\mid I\subseteq S\,\}$ of $\Sigma(W)$ that consists of
quasi-idempotents and is compatible with the set of shapes of $W$. For
$\lambda$ in $\Lambda$ define
\[
S_\lambda=\{\, I\subseteq S\mid X_I\in \lambda\,\} \qquad\text{and} \qquad
e_\lambda= \sum_{I\in S_\lambda} e_I.
\]
Then each $e_\lambda$ is idempotent and $\{\, e_\lambda\mid \lambda\in
\Lambda\,\}$ is a complete set of primitive, orthogonal idempotents in
$\Sigma(W)$. (See \S\ref{prelim} for more details.) In particular,
$\sum_{\lambda\in \Lambda} e_\lambda=1$ in $\BBC W$.

Define $E_\lambda= e_\lambda \BBC W$. In \S\ref{prelim} we show that $e_I
\BBC W_I$ affords an action of $N_W(W_I)$ and that if $I$ is in $S_\lambda$,
then $E_\lambda$ is induced from $e_I \BBC W_I$. Thus, in analogy with the
decomposition in \S\ref{sec:os} of the Orlik-Solomon algebra $A$, we have
isomorphisms of $\BBC W$-modules
\[
E_\lambda \cong \Ind_{N_W(W_I)}^W (e_I \BBC W_I) \qquad \text{and}\qquad
\BBC W\cong \bigoplus_{\lambda\in \Lambda} E_\lambda.
\]

%%%%%%%%%%%%%%%%%%%%%%%%%%%%%%%%%%%%%%%%%%%%%%%%%%%%%%%%%%%%%%%%%%%%%%
\subsection{Centralizers and complements}\label{central}
The last ingredient we need in order to state the conjecture is a certain
set of characters of centralizers of elements of $W$. These characters,
together with the sign character, should quantify the difference between the
representation of $W$ on $H^p(M_W)$ and a subrepresentation of the regular
representation. They naturally arise in work of Howlett and Lehrer
\cite{howlettlehrer:duality} and in recent results of Konvalinka, Pfeiffer,
and R\"over \cite{konvalinkapfeifferroever:centralizers} that describe the
structure of the centralizer of an element in $W$.

Suppose that $I$ is a subset of $S$ and $C$ is a conjugacy class in $W$ such
that $C\cap W_I$ is a cuspidal conjugacy class in $W_I$. Howlett
\cite{howlett:normalizers} has shown that $W_I$ has a complement, $N_I$, in
$N_W(W_I)$. Moreover, it is shown in
\cite{konvalinkapfeifferroever:centralizers} that if $c$ is in $C\cap W_I$,
then $Z_W(c)\subseteq N_W(W_I)$ and $Z_{W}(c) W_I=N_W(W_I)$. It follows that
\begin{enumerate}
\item for any $X$ in $L(\CA)$, $W_X$ has a complement, say $N_X$, in
  $N_W(W_X)$ such that
  \[
  N_{W}(W_X)\cong W_X\rtimes N_X,
  \]
  and
\item for $c$ in $W_X$ cuspidal, $Z_W(c)\subseteq N_W(W_X)$ and
  $Z_W(c)/ Z_{W_X}(c) \cong N_X$.
\end{enumerate}
 
Recall $N_W(W_X)$ is the setwise stabilizer of $X$ in $W$. Define
$\alpha_X\colon N_W(W_X)\to \BBC$ by $\alpha_X(n)= \det n|_X$.  For $c$ in
$W$ with $X=\Fix(c)$, define $\alpha_c= \alpha_X|_{Z_W(c)}$.  Then
\[
\alpha_c(z)= \det z|_{\Fix(c)}
\]
for $z$ in $Z_W(c)$.

%%%%%%%%%%%%%%%%%%%%%%%%%%%%%%%%%%%%%%%%%%%%%%%%%%%%%%%%%%%%%%%%%%%%%%
\subsection{Relating the Orlik-Solomon algebra and the descent algebra}
We now have all the concepts we need in order to state the conjecture.

Let $\epsilon$ denote the sign character of $W$. For $c$ in $W$, define
$X_c=\Fix(c)$, $\rk(c)= \codim X_c$, and $W_c= W_{X_c}$. Notice that $c$ is
a cuspidal element in $W_c$.

Associated with each $\lambda$ in $\Lambda$ we have the $W$-stable subspace
$A_\lambda$ of the Orlik-Solomon algebra $A$, the right ideal $E_\lambda$ in
$\BBC W$, and the set of conjugacy classes with shape $\lambda$. We
conjecture that $A_\lambda$ and $E_\lambda$ are related to the set of
conjugacy classes with shape $\lambda$ as follows.

\begin{conjecture}\label{conj}
  Choose a set $\CC$ of conjugacy class representatives in $W$. For
  $\lambda$ in $\Lambda$, set $\CC_\lambda=\{\, c\in \CC\mid
  \sh(c)=\lambda\,\}$. Then, for each conjugacy class representative $c$ in
  $\CC$, there is a linear character $\varphi_c$ of $Z_W(c)$, such that for
  every $\lambda$ in $\Lambda$,
  \begin{enumerate}
  \item[\sl (a)] the character of $E_\lambda$ is
    \[
    \sum_{c\in \CC_\lambda} \Ind_{Z_W(c)}^W(\varphi_c)
    \]
    and
  \item[\sl (b)] the character of $A_\lambda$ is
    \[
    \sum_{c\in \CC_\lambda} \Ind_{Z_W(c)}^W(\epsilon_c \alpha_c \varphi_c )
    \]
    where $\epsilon_c$ denotes the restriction of $\epsilon$ to $Z_W(c)$.
  \end{enumerate}
  In particular,
  \[
  H^p(M_W)\cong A^p\cong \bigoplus_{\rk(c)=p} \Ind_{Z_W(c)}^W (\epsilon_c
  \alpha_c \varphi_c)
  \]
  for $0\leq p\leq |S|$,
  \[
  \BBC W\cong \bigoplus_{c\in \CC} \Ind_{Z_W(c)}^W( \varphi_c),\quad
  \text{and} \quad H^*(M_W)\cong \bigoplus_{c\in \CC}
  \Ind_{Z_W(c)}^W(\epsilon_c \alpha_c \varphi_c).
  \]
\end{conjecture}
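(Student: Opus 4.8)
First, the three ``in particular'' assertions follow formally from (a) and (b): summing (a) over $\lambda\in\Lambda$ and using $\BBC W\cong\bigoplus_\lambda E_\lambda$ gives the decomposition of $\BBC W$; collecting the shapes with $\codim X_\lambda=p$ and using $A^p\cong\bigoplus_{\codim X=p}A_X$ gives the formula for $H^p(M_W)\cong A^p$; and summing the latter over $p$ gives the formula for $H^*(M_W)$. Since, as noted in the introduction, the conjecture holds for $W$ once it holds for every irreducible factor of $W$, it suffices to prove (a) and (b) when $W$ is irreducible.

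The next step would be to reduce to cuspidal classes. Fix $\lambda\in\Lambda$, pick $I\in S_\lambda$, and set $X=X_I$ and $N=N_W(W_X)\cong W_X\rtimes N_X$ (\S\ref{central}). Combining the isomorphisms $A_\lambda\cong\Ind_N^W(A_X)$ and $E_\lambda\cong\Ind_N^W(e_I\BBC W_I)$ of \S\S\ref{sec:os}--\ref{descent} with transitivity of induction and the bijection of \S\ref{subsect:conj} between the conjugacy classes of $W$ of shape $\lambda$ and the cuspidal classes $C$ of $W_X$ (for $c\in C$ one has $Z_W(c)\subseteq N$, $Z_W(c)W_X=N$ and $\Fix(c)=X$), we see that (a) and (b) for $\lambda$ follow once one produces, for each such $c$, a linear character $\varphi_c$ of $Z_W(c)$ realizing $N$-module decompositions
\[
e_I\BBC W_I\ \cong\ \bigoplus_{C}\Ind_{Z_W(c)}^{N}(\varphi_c)
\qquad\text{and}\qquad
A_X\ \cong\ \bigoplus_{C}\Ind_{Z_W(c)}^{N}(\epsilon_c\alpha_c\varphi_c),
\]
the sums running over the cuspidal classes of $W_X$. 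This splits the problem into a \emph{cuspidal part} --- for $W$ (and, via the irreducible factors of the $W_X$, for every irreducible finite Coxeter group) describe the $W$-module $e_S\BBC W$ and the top Orlik--Solomon space $A_{\Fix(W)}=A^{|S|}$ --- and an \emph{assembly part} --- deduce the displayed decompositions for a (possibly reducible) parabolic $W_X$ from the cuspidal part applied to the irreducible factors of $W_X$, keeping track of how $N_X$ permutes isomorphic factors.

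For the cuspidal part one needs two things: an identification of $e_S\BBC W$ as a $W$-module, a statement about $\Sigma(W)$ and the BBHT idempotents provable via the ``$e_I\BBC W_I$ is induced'' results of \S\S\ref{prelim}--\ref{induced}; and the fact that $A_{\Fix(W)}$ is obtained from $e_S\BBC W$ by twisting each summand by $\epsilon_c\alpha_c$, which carries the geometric information of $M_W$. For $W=S_n$ both are classical: the unique cuspidal class is that of an $n$-cycle $c$, with $Z_{S_n}(c)=\langle c\rangle$ cyclic of order $n$; Lehrer's computation \cite{lehrer:poincare} of $H^*(M_W)$ identifies $A^{n-1}$ with $\Ind_{\langle c\rangle}^{S_n}(\epsilon_c\alpha_c\varphi_c)$ for a faithful linear $\varphi_c$, and the parallel identity for $e_S\BBC S_n$ is Klyachko's description of the Lie idempotent; this is Theorem~\ref{top}, carried out in \S\ref{lambda=n}. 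For the assembly part with $W=S_n$, a parabolic $W_X$ is a product of symmetric groups $S_{\mu_i}$, the group $N_X$ permutes the equal $\mu_i$, and $Z_{S_n}(c)$ is a direct product of wreath products $\BBZ/k\wr S_m$; feeding the $n$-cycle case for each $S_k$ into the displayed decompositions and tracking the $N_X$-action through this wreath-product structure yields (a) and (b) for all shapes of $S_n$, which is Theorem~\ref{thm:main} (\S\ref{arbitrary}). The same constructions, applied with $W_X$ replaced by a parabolic subgroup $W_L$ of type $A$ inside an arbitrary Coxeter group $W$, give the relative statement Theorem~\ref{thm:rel} (\S\ref{sec:rel}).

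The main obstacle is the cuspidal part for irreducible $W$ outside type $A$: there is no uniform construction of the linear characters $\varphi_c$ attached to the cuspidal classes of a general irreducible Coxeter group, and establishing the $\epsilon_c\alpha_c$-twist relating $A_{\Fix(W)}$ to $e_S\BBC W$ requires control of the $W$-module structure of the top Orlik--Solomon space, which beyond type $A$ is only available case-by-case (already in type $B$ the analogous constructions of \cite{douglass:cohomology} produce subgroups that are not centralizers of elements). Moreover, even granting the cuspidal part, the assembly part is not purely formal: it needs the $N_X$-module structures of $A_X$ and of $e_I\BBC W_I$ to be compatible with Clifford theory for $Z_W(c)$ over $Z_{W_X}(c)$, which is transparent when $W_X$ is a product of symmetric groups --- so that $N_X$ is again a product of symmetric groups and one can induct on the rank --- but not in general, which is exactly why Theorem~\ref{thm:rel} is restricted to parabolic subgroups all of whose irreducible factors have type $A$.
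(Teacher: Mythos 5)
Your ``proof'' is not a proof, and cannot be one: the statement is an open conjecture and the paper does not claim to prove it in general. What you have written is a faithful reconstruction of the paper's strategy for the cases it does settle, together with an accurate diagnosis of the obstruction to extending it, and in that sense it matches the paper's approach. Concretely: the reduction to irreducible $W$ is stated in the introduction; the reduction to $E_\lambda\cong\Ind_{N_W(W_X)}^W(e_I\BBC W_I)$ and $A_\lambda\cong\Ind_{N_W(W_X)}^W(A_X)$ and then to $N_W(W_X)$-module decompositions of $e_I\BBC W_I$ and $A_X$ indexed by cuspidal classes of $W_X$ is exactly Proposition~\ref{pro:induct} followed by the structure of Theorems~\ref{any} and~\ref{thm:rel}; the cuspidal type-$A$ core is Theorem~\ref{top}; and the assembly through $Z_W(c_\lambda)\cong Z_\lambda\rtimes N_\lambda$ with $N_\lambda$ a product of symmetric groups permuting isomorphic factors is what \S\ref{arbitrary} and \S\ref{sec:rel} carry out. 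Your identification of the gap — no uniform construction of the cuspidal $\varphi_c$ and no uniform handle on $A^{|S|}$ outside type $A$, plus the need for controlled $N_X$-equivariance already visible in type $B$ — is also the paper's own assessment.

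Two small attribution corrections. Theorem~\ref{top}(b) is credited in the paper to Stanley and to Lehrer--Solomon, not to Lehrer's equivariant Poincar\'e polynomial paper; Lehrer's paper computes $P_{S_n}(g,t)$ but does not itself give the induced-character identification of $A^{n-1}$. And while Theorem~\ref{top}(a) is indeed close to classical free-Lie-algebra results of Brandt, Wever and Klyachko, the paper's proof is a new argument modeled on Lehrer--Solomon's treatment of $A_n$ (the $b^\pm$, $f^\pm$ machinery of \S\ref{lambda=n}), not a citation of Klyachko's Lie idempotent. Neither point affects the soundness of your outline.
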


As stated in the introduction, we prove Conjecture \ref{conj} for symmetric
groups in \S\ref{lambda=n} and \S\ref{arbitrary}. The conjecture is known to
be true for all Coxeter groups with rank up to six
(\cite{douglasspfeifferroehrle:inductive},
\cite{bishopdouglasspfeifferroehrle:computations},
\cite{bishopdouglasspfeifferroehrle:computationsII}).

We in fact prove more than is stated in the conjecture. First, we show that
the character $\varphi_c$ of $Z_W(c)$ may be chosen to be a trivial
extension of a character of $Z_{W_c}(c)$. Second, we construct explicit
$\BBC W$-module isomorphisms
\[
E_\lambda \cong \Ind_{Z_W(c)}^W(\varphi_c) \quad\text{and} \quad A_\lambda
\cong \Ind_{Z_W(c)}^W(\epsilon_c \alpha_c \varphi_c ).
\]

In \S\ref{sec:rel} we extend the constructions in~\S\ref{lambda=n}
and~\S\ref{arbitrary} and show that if $W$ is any finite Coxeter group,
$\lambda$ is in $\Lambda$, $c$ is in $W$ with $\sh(c)=\lambda$, and the
irreducible components of $W_c$ are all of type $A$, then the character
$\varphi_c$ of $Z_{W_c}(c)$ constructed in~\S\ref{arbitrary} extends to a
character $\widetilde{\varphi}_c$ of $Z_W(c)$. Moreover, we construct
explicit $\BBC W$-module isomorphisms
\[
E_\lambda \cong \Ind_{Z_W(c)}^W(\widetilde{\varphi}_c) \quad\text{and} \quad
A_\lambda \cong \Ind_{Z_W(c)}^W(\epsilon_c \alpha_c \widetilde{\varphi}_c ).
\]
Note that with the given assumptions we have $|\CC_\lambda|=1$, and so the
sums in Conjecture~\ref{conj} (a) and (b) reduce to a single summand. Also,
in contrast with the case when the ambient group $W$ is a symmetric group
and $\widetilde{ \varphi_c}$ is the trivial extension, in the general case,
$\widetilde{\varphi}_c$ may not be the trivial extension of $\varphi_c$.

A direct proof of the conjecture involves finding suitable linear characters
$\varphi_c$ of $Z_{W}(c)$. The computations in
\cite{bishopdouglasspfeifferroehrle:computations} and
\cite{bishopdouglasspfeifferroehrle:computationsII}, as well as calculations
in type $B$, show that some natural guesses about the characters $\varphi_c$
are not true. For example, $Z_W(c)$ acts on the eigenspaces of $c$ in $V$
and so the powers of the determinant character of $Z_W(c)$ acting on an
eigenspace of $c$ are linear characters of $Z_W(c)$. An example in
\cite[\S4]{bishopdouglasspfeifferroehrle:computations} shows that it is not
always possible to choose $\varphi_c$ to be one of these characters for any
eigenspace.

On the other hand, suppose that $c$ is an involution and $X_c$ is the
$-1$-eigenspace of $c$ in $V$. Then $Z_W(c)=N_W(W_{X_c})\cong W_{X_c}
\rtimes N_{X_c}$. Let $\epsilon_{X_c}$ be the sign character of
$W_{X_c}$. Then in all the cases that have been computed so far, it turns
out that $\varphi_c$ may be chosen to be the ``trivial'' extension of
$\epsilon_{W_c}$ to $Z_W(c)$ in the sense that $\varphi_c(wn)=
\epsilon_{X_c}(w)$ for $w$ in $W_{X_c}$ and $n$ in $N_{X_c}$. The
representations $\Ind_{Z_W(c)}^W \varphi_c$ play a role in understanding the
characters of finite reductive groups (\cite{kottwitz:involutions},
\cite{geckmalle:frobenius}), and the corresponding representations of the
Iwahori-Hecke algebra of $W$ in \cite{lusztigvogan:hecke} play a role in the
representation theory of complex reductive groups. The fact that these
representations seem to be closely related to the representation of $W$ on
$H^*(M_W)$ is quite mysterious.

%%%%%%%%%%%%%%%%%%%%%%%%%%%%%%%%%%%%%%%%%%%%%%%%%%%%%%%%%%%%%%%%%%%%%%
%%%%%%%%%%%%% \S3 
%%%%%%%%%%%%%%%%%%%%%%%%%%%%%%%%%%%%%%%%%%%%%%%%%%%%%%%%%%%%%%%%%%%%%%
\section{BBHT idempotents}
\label{prelim} 

In this section we collect several preliminary results about the descent
algebra of $W$ and the BBHT idempotents.

For subsets $I$, $J$, and $K$ of $S$ define
\[
W^{IJ}= \left(W^I\right)\inverse \cap W^J \qquad \text{and} \qquad W^{IJK}=
\{\, w\in W^{IJ}\mid w\inverse(\Delta_I) \cap \Delta_J=\Delta_K\,\}.
\]
Then $W^{IJ}$ is the set of minimal length $(W_I,W_J)$-double coset
representatives in $W$. Solomon \cite{solomon:decomposition} has shown that
$x_Ix_J=\sum_K a_{IJK} x_K$ where $a_{IJK}=|W^{IJK}|$.

Let $2^S$ denote the power set of $S$ and fix a function $\sigma\colon
2^S\to \BBR_{>0}$. Notice that $W^K=\{\, w\in W\mid w(\Delta_K)\subseteq
\Phi^+\,\}$ where $\Phi^+$ is the positive system determined by $\Delta$.
Thus, for $J\subseteq K$ and $w$ in $W^K$ we have $w(\Delta_J)\subseteq
\Phi^+$. For subsets $J$ and $K$ of $S$ define
\[
m_{JK}^\sigma= \sum_{\substack{w\in W^K\\ w(\Delta_J)\subseteq \Delta}}
\sigma({}^{w}J) %%= \sum_{\substack{I\\ I\sim J}} \sigma(I) a_{IKJ}
\quad\text{if}\quad J\subseteq K \quad\text{and}\quad m_{JK}^\sigma=0
\quad\text{if}\quad J\not\subseteq K.
\]
Because $\sigma(I)>0$ for all subsets $I$ of $S$, we have
$m_{JJ}^\sigma\ne0$ for all $J$ and so the system of equations
\begin{equation}
  \label{eq:system}
  x_K=\sum_{J\subseteq S} m_{JK}^\sigma e_J^\sigma, \qquad K\subseteq S  
\end{equation}
can be solved uniquely for $\{\, e_J^\sigma \mid J\subseteq S\,\}$.  Define
$n_{JK}^\sigma$ and $e_K^\sigma$ by
\[
e_K^\sigma=\sum_{J\subseteq S} n_{JK}^\sigma x_J.
\]
Then $e_K^\sigma$ is in $\Sigma(W)$, $n_{KK}^\sigma=
(m_{KK}^\sigma)\inverse$ for all subsets $K$ of $S$, and $n_{JK}^\sigma=0$
when $J\not\subseteq K$.

Bergeron, Bergeron, Howlett, and Taylor \cite[\S7]
{bergeronbergeronhowletttaylor:decomposition} have shown that $e_I^\sigma$
is a quasi-idempotent in $\Sigma(W)$. Precisely, for $\lambda$ in $\Lambda$
define $\sigma(\lambda)=\sum_{I\in S_\lambda} \sigma(I)$. Then
\begin{equation}
  \label{eq:bbht}
  e_I^\sigma e_J^\sigma = \sigma(\lambda)\inverse e_J^\sigma 
\end{equation}
when $I$ and $J$ are in $S_\lambda$.  Thus, if we set
\[
e_\lambda^\sigma= \sum_{I\in S_\lambda} \sigma(I) e_I^\sigma,
\]
it follows from~(\ref{eq:bbht}) that $e_\lambda^\sigma$ is an idempotent in
$\Sigma(W)$ and hence an idempotent in $\BBC W$. We call the
quasi-idempotents $e_I^\sigma$ \emph{BBHT quasi-idempotents} and the
idempotents $e_\lambda^\sigma$ \emph{BBHT idempotents.}

By definition we have $1=x_S= \sum_{J\subseteq S} n_{JS}^\sigma e_J^\sigma$
and so $1= \sum_{\lambda\in \Lambda}e_\lambda^\sigma$ in $\Sigma(W)$ and
$\BBC W$. It follows that $\{\, e_\lambda^\sigma\mid \lambda\in \Lambda\,\}$
is a set of pairwise orthogonal idempotents in $\BBC W$ and that
\begin{equation}
  \label{eq:multIlambda}
  e_\lambda ^\sigma e_I^\sigma= e_I^\sigma \quad\text{and}\quad e_I^\sigma
  e_\lambda^\sigma = \sigma(\lambda)\inverse e_\lambda ^\sigma
\end{equation}
for $I\in S_\lambda$.

In the special case when the function $\sigma(I)=1$ for all subsets $I$ of
$S$ we do not include $\sigma$ in the notation. Thus,
\begin{equation}\label{eqn:sigma=1}
  \begin{gathered}
    m_{JK} = \left|\{\, w\in W^K\mid w(\Delta_J)\subseteq \Delta\,\}
    \right| \text{\ for\ } J\subseteq K, \\
    e_K=\sum_{J\subseteq S} n_{JK} x_J,\quad \text{and} \quad
    e_\lambda = \sum_{I\in S_\lambda} e_I.
  \end{gathered}  
\end{equation}

Notice that the quantities $m_{JK}^\sigma$, $e_J^\sigma$, $m_{JK}$, $e_J$,
\dots\ are defined relative to an ambient Coxeter system $(W,S)$.  Below we
also consider the analogous quantities defined relative to a parabolic
subsystem $(W_L, L)$. To help keep things straight, in this section and the
next we use the following conventions:
\begin{itemize}
\item $\sigma$ always denotes a function from $2^S$ to $\BBR_{>0}$.

\item When $\sigma(I)=1$ for all $I\subseteq S$, the BBHT quasi-idempotents
  in $\BBC W$ defined with respect to $\sigma$ are denoted by $e_J$.

\item $\tau$ always denotes a function from $2^L$ to $\BBR_{>0}$ where $L$
  is a subset of $S$.

\item When $\tau(I)=1$ for $I\subseteq L$, the BBHT quasi-idempotents in
  $\BBC W_L$ defined with respect to $\tau$ are denoted by $e_J^L$. Thus,
  $e_J^S=e_J$.
\end{itemize}

For example, $\{S\}$ is a shape of $W$ and $\{L\}$ is a shape of $W_L$.
Then $e_{\{S\}}^\sigma= \sigma(S) e_{S}^\sigma$, $e_{\{S\}}= e_S$, and
$e_{L}^L= e_L^\tau= e_{\{L\}} ^\tau$ when $\tau(I)=1$ for all subsets $I$ of
$L$.

The following lemmas give some translation properties for the quantities
defined above.

\begin{lemma}\label{lem:shift}
  Suppose that $K\subseteq S$ and $d$ is in $W$ with $d\inverse (\Delta_K)
  \subseteq \Delta$. Then
  \begin{enumerate}
  \item[\sl (a)] $x_{K^d}= x_K d$;
  \item[\sl (b)] $m_{I^dJ^d}^\sigma= m_{IJ}^\sigma$ for $I\subseteq J
    \subseteq K$; and
  \item[\sl (c)] $e_{L^d}^\sigma= e_L^\sigma d$ for $L \subseteq K$.
  \end{enumerate}
\end{lemma}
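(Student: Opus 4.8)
The plan is to prove the three statements in order, using (a) to establish (b), and (a)–(b) together with the defining linear systems to establish (c). Throughout I will use the standard fact that for $K \subseteq S$ and $d \in W$ with $d^{-1}(\Delta_K) \subseteq \Delta$, the set $K^d := \{\, s \in S \mid \alpha_s \in d^{-1}(\Delta_K)\,\}$ satisfies $\Delta_{K^d} = d^{-1}(\Delta_K)$, that $d \in W^{K^d}$, and that $W_{K^d} = d^{-1} W_K d$; these are routine consequences of the theory of parabolic subgroups and the fact that $W^{K^d}$ consists of the $w$ with $w(\Delta_{K^d}) \subseteq \Phi^+$.

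For (a): since $d \in W^{K^d}$ is the minimal-length element of the coset $W_{K^d} d = d W_K$ — indeed $d W_K = W_{K^d} d$ because $W_{K^d} = d W_K d^{-1}$ — right multiplication by $d$ gives a length-additive bijection $W^{K^d} \to W^K d$, i.e. $\ell(wd) = \ell(w) + \ell(d)$ for $w \in W^{K^d}$, and $W^{K^d} d = W^K d$... more carefully, one checks $W^{K^d} = W^K d$ as subsets of $W$, using that $w \in W^{K^d}$ iff $w(\Delta_{K^d}) \subseteq \Phi^+$ iff $(wd^{-1})(\Delta_K) \subseteq \Phi^+$ iff $wd^{-1} \in W^K$. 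Summing over this equality of sets yields $x_{K^d} = \sum_{w \in W^{K^d}} w = \sum_{v \in W^K} vd = x_K d$.

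For (b): this is a bijective matching of the index sets. Writing $d$ for the element in the hypothesis, I claim the map $w \mapsto w' $ with $\Delta_{w' } $ matching up under conjugation by $d$ sets up a bijection between $\{\, w \in W^{J} \mid w(\Delta_I) \subseteq \Delta\,\}$ and $\{\, w \in W^{J^d} \mid w(\Delta_{I^d}) \subseteq \Delta\,\}$ that is compatible with the weighting $\sigma$. Concretely, for $I \subseteq J \subseteq K$ one has $I^d \subseteq J^d \subseteq K$ (since $\Delta_{I^d} \subseteq \Delta_{J^d} \subseteq \Delta_{K^d} \subseteq \Delta$), and using part (a) together with the relation $W^{J^d} = W^{K}$-translates one checks that $w \in W^{J^d}$ with $w(\Delta_{I^d}) \subseteq \Delta$ corresponds to $wd \cdot d^{-1} \cdots$; the cleanest route is to observe that the definition of $m^\sigma_{IJ}$ depends only on the Coxeter datum of the parabolic pair together with the $\sigma$-weights of the subsets ${}^wI$, and that conjugation by $d$ gives an isomorphism of Coxeter systems $W_{K^d} \to W_K$ carrying $I^d \mapsto I$, $J^d \mapsto J$ and carrying ${}^{w}(I^d)$ to ${}^{wd^{-1}}I$ — but since $\sigma$ is a fixed function on $2^S$ and the subsets ${}^w(I^d)$ and ${}^{wd^{-1}}I$ coincide as subsets of $S$ when $w$ and $wd^{-1}$ correspond, the $\sigma$-values match. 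Summing gives $m^\sigma_{I^dJ^d} = m^\sigma_{IJ}$. I expect this step — carefully arranging the correspondence of summation indices and checking that the $\sigma$-weights go along for the ride — to be the main obstacle, though it is bookkeeping rather than anything deep.

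For (c): I proceed by induction on $|K \setminus L|$, i.e. I prove $e^\sigma_{L^d} = e^\sigma_L d$ for all $L \subseteq K$ simultaneously, working from the largest such $L$ (namely $L = K$, where (c) follows from (a) applied to $K$ together with the identity $e^\sigma_K = \sum_J n^\sigma_{JK} x_J$... actually more directly from the triangular systems). The mechanism: $e^\sigma_{L^d}$ is determined by the linear system $x_{M} = \sum_{L' \subseteq S} m^\sigma_{L'M}\, e^\sigma_{L'}$; restricting attention to $M$ ranging over subsets of $K^d$ containing $L^d$-type data and applying (a) to rewrite $x_{M}$ for $M = N^d$ with $N \subseteq K$ as $x_N d$, and (b) to rewrite $m^\sigma_{L'^d N^d} = m^\sigma_{L'N}$, one sees that $\{\, e^\sigma_L d \mid L \subseteq K\,\}$ satisfies the same triangular system (over the relevant index set) that uniquely determines $\{\, e^\sigma_{L^d} \mid L \subseteq K\,\}$; here one uses that $n^\sigma_{L'M} = 0$ unless $L' \subseteq M$, so that for $M \subseteq K^d$ only the $e^\sigma_{L'}$ with $L' \subseteq K^d$, i.e. $L' = L''^d$ with $L'' \subseteq K$, enter. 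By uniqueness of the solution to~\eqref{eq:system} (valid since $m^\sigma_{JJ} \ne 0$), we conclude $e^\sigma_{L^d} = e^\sigma_L d$ for every $L \subseteq K$.
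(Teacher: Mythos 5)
Your proof is correct and follows essentially the same route as the paper's: part (a) comes down to $W^{K^d} = W^K d$ (which you prove directly from $W^J = \{\,w : w(\Delta_J) \subseteq \Phi^+\,\}$, where the paper cites \cite[Lemma~2.4]{bergeronbergeronhowletttaylor:decomposition}), part (b) is the same substitution $w \mapsto wd^{-1}$ keyed on the identity ${}^w(I^d) = {}^{wd^{-1}}I$, and part (c) is the same triangularity argument (you appeal to uniqueness of the solution of \eqref{eq:system}, while the paper inverts explicitly by multiplying with $n_{JL}^\sigma$ and summing). One small slip: where you write ``$I^d \subseteq J^d \subseteq K$'' you mean $K^d$, not $K$.
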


\begin{proof}
  It is shown in \cite[Lemma 2.4]
  {bergeronbergeronhowletttaylor:decomposition} that $W^{K^d}=W^K d$.
  Statement (a) follows immediately.

  Suppose that $I\subseteq J\subseteq K$. Clearly, $\Delta_{L^d}=
  d\inverse(\Delta_L)$ for all $L\subseteq K$ and so
  \[
  m_{I^dJ^d}^\sigma= \sum_{\substack{w\in W^{J^d} \\
      w(\Delta_{I^d})\subseteq \Delta}} \sigma({}^{w}(I^d))
  = \sum_{\substack{wd\inverse\in W^{J} \\
      wd\inverse(\Delta_{I})\subseteq \Delta}} \sigma({}^{wd\inverse}I)
  = \sum_{\substack{y\in W^{J} \\
      y(\Delta_{I})\subseteq \Delta}} \sigma({}^{y}I) =m_{IJ}^\sigma.
  \]
  This proves (b).

  Using (a) and (b) we see that for $J\subseteq K$,
  \[
  \sum_{I} m_{IJ}^\sigma (e_I^\sigma d)= x_{J}d =x_{J^d}= \sum_{I}
  m_{IJ^d}^\sigma e_{I}^\sigma = \sum_{I} m_{{}^{d}IJ}^\sigma e_I^\sigma
  = \sum_{I} m_{IJ}^\sigma e_{I^d}^\sigma.
  \] 
  Thus, $\sum_{I} m_{IJ}^\sigma (e_I^\sigma d) = \sum_{I} m_{IJ}^\sigma
  e_{I^d}^\sigma$. Now fix a subset $L$ of $K$, multiply both sides by
  $n_{JL}^\sigma$, and sum over $J$, to get $e_L^\sigma d=
  e_{L^d}^\sigma$. (Note that $n_{JL}^\sigma=0$ unless $J\subseteq L$.)
  This proves (c).
\end{proof}

Recall that we have fixed a positive, definite, Hermitian form on $V$ such
that $W$ is a subgroup of $\U(V)$, the unitary group of $V$. Define
\[
N(W)=\{\, n\in \U(V) \mid n(\Delta)=\Delta\,\}.
\]
Notice that if $n$ is in $N(W)$, then $nSn\inverse =S$. Thus, $N(W)$ acts on
$S$ and on $2^S$, and $WN(W)\cong W\rtimes N(W)$.

\begin{lemma}\label{lem:n}
  Suppose that $n$ is in $N(W)$ and that $\sigma(I^n)=\sigma(I)$ for all
  $I\subseteq S$. Then $e_{I^n}^\sigma = n\inverse e_I^\sigma n$ for
  $I\subseteq S$. In particular, $n$ centralizes $e_S^\sigma$ in $\BBC W$.
\end{lemma}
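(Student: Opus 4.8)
The plan is to mimic the proof of Lemma~\ref{lem:shift}, replacing the translating element $d\in W$ by the normalizing element $n\in N(W)$ and tracking the effect of conjugation by $n$ on all the combinatorial data. The key point is that conjugation by $n$ permutes $S$, hence permutes $2^S$ and the set of shapes $\Lambda$, and it preserves lengths (since $n$ permutes $\Delta$, it permutes $\Phi^+$), so it is an automorphism of the Coxeter system $(W,S)$. Concretely, first I would record that $W^{I^n}=n\inverse W^I n$: indeed $W^I=\{\,w\mid w(\Delta_I)\subseteq\Phi^+\,\}$, and $n\inverse w n(\Delta_{I^n})=n\inverse w(\Delta_I)$, which lies in $\Phi^+$ iff $w(\Delta_I)\subseteq\Phi^+$ (using $n(\Phi^+)=\Phi^+$). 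Summing over these cosets gives $n\inverse x_I n = x_{I^n}$ in $\BBC W$. Next, the same substitution $w\mapsto n\inverse w n$ in the defining sum for $m^\sigma_{IJ}$, together with the hypothesis $\sigma(I^n)=\sigma(I)$ (which also forces $\sigma({}^{w}I)=\sigma({}^{n\inverse w n}(I^n))$ after reindexing), yields $m^\sigma_{I^nJ^n}=m^\sigma_{IJ}$ for $I\subseteq J\subseteq S$; and similarly $n_{I^nJ^n}^\sigma=n_{IJ}^\sigma$ is forced because the $n^\sigma$ are the unique solution of the triangular system~(\ref{eq:system}), whose coefficients are invariant under the permutation $I\mapsto I^n$.

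With these identities in hand, the conclusion is immediate from the defining formula $e_K^\sigma=\sum_{J\subseteq S} n_{JK}^\sigma x_J$. Conjugating by $n$ gives
\[
n\inverse e_K^\sigma n=\sum_{J\subseteq S} n_{JK}^\sigma\, n\inverse x_J n
=\sum_{J\subseteq S} n_{JK}^\sigma\, x_{J^n}
=\sum_{J\subseteq S} n_{J^nK^n}^\sigma\, x_{J^n}
=\sum_{J'\subseteq S} n_{J'K^n}^\sigma\, x_{J'}
= e_{K^n}^\sigma,
\]
where the third equality uses $n_{JK}^\sigma=n_{J^nK^n}^\sigma$ and the fourth is the reindexing $J'=J^n$. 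Taking $K=S$, and noting $S^n=S$, gives $n\inverse e_S^\sigma n=e_S^\sigma$, i.e. $n$ centralizes $e_S^\sigma$; this proves the ``in particular'' clause.

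I do not anticipate a serious obstacle here: the argument is a direct translation of Lemma~\ref{lem:shift} with $n$ in place of $d$, and the only real content is checking that conjugation by an element of $N(W)$ is a length-preserving automorphism of $(W,S)$ that respects $\sigma$. The one point requiring a little care is the bookkeeping with $\sigma$ in the sum defining $m^\sigma_{IJ}$: one must verify that under $w\mapsto n\inverse wn$ the superscript set ${}^{w}I=wIw\inverse$ transforms to ${}^{n\inverse w n}(I^n)$, so that the hypothesis $\sigma(I^n)=\sigma(I)$ for \emph{all} $I\subseteq S$ (not merely for subsets of a fixed $J$) is exactly what is needed to match the two sums term-by-term. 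Everything else is formal.
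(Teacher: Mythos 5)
Your proposal is correct and follows essentially the same route as the paper: establish $n\inverse x_I n = x_{I^n}$, deduce $m^\sigma_{I^nJ^n}=m^\sigma_{IJ}$ by reindexing the defining sum and using the $\sigma$-invariance hypothesis, conclude $n^\sigma_{I^nJ^n}=n^\sigma_{IJ}$, and finish by conjugating the defining formula for $e_K^\sigma$. The only cosmetic difference is that you verify $n\inverse W^I n = W^{I^n}$ directly via $W^I=\{\,w\mid w(\Delta_I)\subseteq\Phi^+\,\}$, whereas the paper invokes the length-preservation $\ell(nwn\inverse)=\ell(w)$; these are equivalent observations.
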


\begin{proof}
  Because $n(\Delta)=\Delta$, it follows that $\ell(nwn\inverse)= \ell(w)$
  for all $w$ in $W$. Therefore, $n\inverse W^I n=W^{I^n}$ and hence
  $n\inverse x_In=x_{I^n}$ for all $I\subseteq S$.

  Suppose $I\subseteq J\subseteq S$. Then
  \[
  \sum_{\substack{w\in W^{J^n} \\w(\Delta_{I^n})\subseteq \Delta}}
  \sigma({}^{w}(I^n)) = \sum_{\substack{nwn\inverse\in W^{J}
      \\nwn\inverse(\Delta_{I})\subseteq \Delta}} \sigma(I^{nw\inverse}) =
  \sum_{\substack{y\in W^{J} \\ y(\Delta_{I})\subseteq \Delta}}
  \sigma(({}^{y}I)^n) = \sum_{\substack{y\in W^{J} \\
      y(\Delta_{I})\subseteq \Delta}} \sigma({}^{y}I)
  \]
  and so $m_{I^nJ^n}^\sigma= m_{IJ}^\sigma$. Thus, $n_{I^nJ^n}^\sigma=
  n_{IJ}^\sigma$ and it follows from~(\ref{eq:system}) that $n\inverse
  e_I^\sigma n = e_{I^n}^\sigma$.
\end{proof}

\begin{lemma}\label{lem:w0}
  Let $w_0$ denote the longest element in $W$.
  \begin{enumerate}
  \item[\sl (a)] The element $w_0$ is in $\Sigma(W)$ and
    \[
    w_0= \sum_{L\subseteq S} (-1)^{|L|} e_L = \sum_{\lambda\in \Lambda}
    (-1)^{d_\lambda} e_\lambda,
    \]
    where $d_\lambda= |L|$ for $L$ in $S_\lambda$.
  \item[\sl (b)] For $J\subseteq S$, $w_0 e_J=(-1)^{|J|} e_J$,
    $w_0e_Jw_0=e_{J^{w_0}}$, and $e_Jw_0= (-1)^{|J|} e_{J^{w_0}}$.
  \end{enumerate}
\end{lemma}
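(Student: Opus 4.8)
The plan is to prove (a) first and then derive (b) as a consequence, using the translation lemmas already established. For (a), recall Solomon's formula $x_I = \sum_{J \subseteq I} |W^{IJ} \cap W_I \cdot \text{(something)}|$... more usefully, the starting point is the classical identity expressing $w_0$ in terms of the $x_I$. The key observation is that $w_0 W^I = W^{S \setminus I^{w_0}}$ type relations hold, so that $w_0$ lies in the span of the $x_I$, i.e.\ in $\Sigma(W)$. Concretely, since $w_0$ is the unique longest element and $W^I$ consists of those $w$ with $\ell(ws) > \ell(w)$ for all $s \in I$, one has $w_0 \in W^\emptyset$ and more generally the coset structure gives $w_0 = \sum_{L} (-1)^{|L|} x_L$ after a Möbius inversion; I would instead take as known (from Solomon or from \cite{bergeronbergeronhowletttaylor:decomposition}) the cleanest available expression and then apply the change of basis \eqref{eq:system}. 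Since $x_K = \sum_{J} m_{JK} e_J$ with $\sigma \equiv 1$, and since $e_L = \sum_J n_{JL} x_J$ is the inverse relation, I would verify $w_0 = \sum_{L \subseteq S} (-1)^{|L|} e_L$ by checking it is consistent with $w_0 x_J = \pm x_{J'}$ for appropriate $J'$; the grouping into $\sum_\lambda (-1)^{d_\lambda} e_\lambda$ is then immediate because $|L|$ is constant (equal to $\codim X_L = d_\lambda$) on each $S_\lambda$.

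For (b), the identity $w_0 e_J = (-1)^{|J|} e_J$ I expect to follow from (a) together with the multiplication rules for the $e_L$: since $w_0 = \sum_L (-1)^{|L|} e_L$ and the $e_\lambda$ are orthogonal idempotents with $e_\lambda e_J = e_J$ for $J \in S_\lambda$ and $e_\mu e_J = 0$ for $\mu \neq \lambda$ (this is \eqref{eq:multIlambda} read in the $\sigma \equiv 1$ case, noting $e_\mu e_I = e_\mu^\sigma e_I^\sigma$), multiplying gives $w_0 e_J = \sum_\mu (-1)^{d_\mu} e_\mu e_J = (-1)^{d_\lambda} e_\lambda e_J = (-1)^{|J|} e_J$. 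The relation $w_0 e_J w_0 = e_{J^{w_0}}$ is a conjugation statement: since $w_0 \in N(W)$ in the sense that $w_0(\Delta) = -\Delta$ is \emph{not} quite $\Delta$, I would instead argue directly — conjugation by $w_0$ sends $W^I$ to $W^{I^{w_0}}$ because $\ell(w_0 w w_0) $ relates to $\ell(w)$ via $\ell(w_0) - \ell$, so $w_0 x_I w_0 = x_{I^{w_0}}$, and then \eqref{eq:system} combined with the equality $m_{I^{w_0} J^{w_0}} = m_{IJ}$ (proved just as in Lemma~\ref{lem:n}, since $w_0$ normalizes $S$) forces $w_0 e_I w_0 = e_{I^{w_0}}$. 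The third formula $e_J w_0 = (-1)^{|J|} e_{J^{w_0}}$ then follows by combining the first two: $e_J w_0 = w_0 (w_0 e_J w_0) = w_0 e_{J^{w_0}}$... wait, that gives $w_0 e_{J^{w_0}}$; better to write $e_J w_0 = (w_0 \cdot w_0 e_J) w_0 = w_0 \cdot (w_0 e_J w_0)^{?}$ — cleanly: $e_J w_0 = w_0^2 e_J w_0 = w_0 (w_0 e_J w_0) = w_0 e_{J^{w_0}} = (-1)^{|J^{w_0}|} e_{J^{w_0}} = (-1)^{|J|} e_{J^{w_0}}$, using $w_0^2 = 1$, then part (b) first formula applied to $J^{w_0}$, and finally $|J^{w_0}| = |J|$.

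The main obstacle I anticipate is pinning down the correct normalization and sign in the base identity $w_0 = \sum_L (-1)^{|L|} x_L$ (or its analogue in the $e$-basis) — the subtlety is that $w_0$ does \emph{not} in general fix $\Delta$, only the set of positive roots is sent to the negatives, so one must be careful about whether the relevant symmetry is conjugation by $w_0$ or multiplication, and the two interact through $\ell(w_0 w) = \ell(w_0) - \ell(w)$. I would handle this by first establishing $w_0 x_I w_0 = x_{I^{w_0}}$ and $w_0 x_I = \sum_{K} (\pm) x_K$ carefully from the coset description $W^I = \{w : w(\Delta_I) \subseteq \Phi^+\}$, and only then pass to the $e$-basis. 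Everything else is formal manipulation with the orthogonal idempotents $e_\lambda$ and the inversion \eqref{eq:system}, parallel to the proofs of Lemmas~\ref{lem:shift} and~\ref{lem:n}.
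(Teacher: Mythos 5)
Your part (b) is fine and essentially the paper's argument: multiply $w_0 = \sum_\lambda (-1)^{d_\lambda} e_\lambda$ against $e_J$ and use orthogonality of the $e_\lambda$ together with $e_\lambda e_J = e_J$ for $J \in S_\lambda$; then $w_0 e_J w_0 = e_{J^{w_0}}$ from $m_{J^{w_0} K^{w_0}} = m_{JK}$ (hence $n_{J^{w_0} K^{w_0}} = n_{JK}$) exactly as you say; then combine via $w_0^2 = 1$. Two small cautions there: conjugation by $w_0$ preserves length (it is $\ell(w_0 w w_0) = \ell(w)$, not the $\ell(w_0) - \ell(w)$ relation you quote, which governs multiplication, not conjugation); and the relation $e_\mu e_J = 0$ for $\mu \neq \lambda$, which you use implicitly, follows from $e_\mu e_\lambda = 0$ plus $e_\lambda e_J = e_J$.

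Part (a), however, has a genuine gap: you never prove the identity that does all the work. Converting $w_0 = \sum_K (-1)^{|K|} x_K$ into the $e$-basis via $x_K = \sum_J m_{JK} e_J$ requires showing that the coefficient of $e_J$ is $(-1)^{|J|}$, i.e.\ the identity
\[
(-1)^{|J|} = \sum_{J \subseteq K} (-1)^{|K|}\, m_{JK}
\]
for every $J \subseteq S$. You do not state this identity, and your proposed workaround --- ``checking it is consistent with $w_0 x_J = \pm x_{J'}$'' --- cannot substitute for it, because $w_0 x_J$ is \emph{not} a signed single $x_{J'}$. (Left multiplication by $w_0$ sends $W^J$ to $\{\, w_0 w : w \in W^J\,\}$, and $w_0 w \in W^K$ iff $ws < w$ for all $s \in K$, which is not a descent condition defining any $W^{J'}$; already $w_0 W^S = \{w_0\}$ is not a $W^K$. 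Likewise your ``$w_0 W^I = W^{S\setminus I^{w_0}}$ type relations'' do not hold.) The paper proves both $w_0 = \sum_K (-1)^{|K|} x_K$ and the displayed $m_{JK}$-identity by a single M\"obius-inversion device: partition $W^J = \coprod_{J \subseteq K} Y^K$ where $Y^K$ is the set of elements whose right-ascent set is exactly $K$, invert, and then intersect with two choices of $P \subseteq W$. Taking $P = W$ and $J = \emptyset$ gives $Y^\emptyset = \{w_0\}$, hence $w_0 = \sum_K (-1)^{|K|} x_K$; taking $P = \{\, w : w(\Delta_J) \subseteq \Delta\,\}$ gives $Y^J \cap P = \{w_0 w_J\}$, a singleton, which after inversion yields exactly $(-1)^{|J|} = \sum_{J \subseteq K} (-1)^{|K|} m_{JK}$. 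That second specialization is the missing idea in your proposal, and without it the change of basis in (a) does not go through.
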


\begin{proof}
  For $J\subseteq S$ define $Y^J=\{\, w\in W\mid \{\, s\in S\mid ws>w \,\}=J
  \,\}$. Then $W^J= \amalg_{J\subseteq K} Y^K$ and so $W^J\cap P=
  \amalg_{J\subseteq K} (Y^K \cap P)$ for every subset $P$ of $W$. For
  $P\subseteq W$ define
  \[
  p_J^P= \sum_{w\in W^J \cap P}w, \quad q_K^P= \sum_{w\in Y^K \cap P} w,
  \quad f_J^P= |W^J \cap P|, \quad \text{and}\quad g_K^P=|Y^K \cap P|.
  \] 
  Then $p_J^P= \sum_{J\subseteq K} q_K^P$ and $f_J^P= \sum_{J\subseteq K}
  g_K^P$, and so by M\"obius inversion,
  \begin{equation}
    \label{eq:yja}
    q_J^P= \sum_{J\subseteq K} (-1)^{|K|-|J|} p_K^P \quad\text{and}\quad
    g_J^P= \sum_{J\subseteq K} (-1)^{|K|-|J|} f_K^P .
  \end{equation}

  Taking $J=\emptyset$ and $P=W$ in~(\ref{eq:yja}), we have
  $Y^\emptyset=\{w_0\}$ and so
  \begin{equation}
    \label{eq:w0}
    w_0=q_\emptyset^W = \sum_K (-1)^{|K|} p_K^W= \sum_K (-1)^{|K|} x_K.    
  \end{equation}

  For $J\subseteq S$, set $W^J_+=\{\, w\in W \mid w(\Delta_J) \subseteq
  \Delta\,\}$. Then for $J\subseteq K$, $m_{JK}= |W^K \cap W^J_+|$.  Taking
  $P=W^J_+$ in~(\ref{eq:yja}), we have $Y^J\cap W^J_+= \{ w_0w_J\}$, where
  $w_J$ is the longest element in $W_J$, and so $g_J^{P}=1$ and $f_K^{P} =
  m_{JK}$. Therefore,
  \begin{equation}
    \label{eq:mjk}
    (-1)^{|J|} = \sum_{J\subseteq K} (-1)^{|K|} m_{JK}.    
  \end{equation}
 
  Using~(\ref{eq:w0}) and ~(\ref{eq:mjk}), we have
  \begin{multline*}
    w_0= \sum_K (-1)^{|K|} x_K =\sum_K (-1)^{|K|} \sum_{J} m_{JK} e_J
    = \sum_J \left(\sum_{J\subseteq K} (-1)^{|K|} m_{JK} \right) e_J\\
    = \sum_J (-1)^{|J|} e_J = \sum_{\lambda\in \Lambda} (-1)^{d_\lambda}
    e_\lambda.
  \end{multline*}

  Now suppose that $J\subseteq S$ and $\mu \in \Lambda$ are such that $J\in
  S_\mu$. By (\ref{eq:multIlambda}), we have $e_\mu e_J=e_J$ and so
  \[
  w_0e_J= \sum_{\lambda\in \Lambda} (-1)^{d_\lambda} e_\lambda e_\mu e_J =
  (-1)^{d_\mu} e_\mu e_J = (-1)^{|J|} e_J.
  \]
  For subsets $J$ and $K$ of $S$ we have $m_{JK}= m_{J^{w_0}
    K^{w_0}}$. Thus, $n_{JK}= n_{J^{w_0} K^{w_0}}$ and it follows
  from~(\ref{eq:system}) that $w_0e_Jw_0=e_{J^{w_0}}$. Finally,
  \[
  e_Jw_0= w_0 (w_0e_Jw_0)= (-1)^{|J^{w_0}|} e_{J^{w_0}} =(-1)^{|J|}
  e_{J^{w_0}}.
  \]
  This completes the proof of the lemma.
\end{proof}

For a subset $L$ of $S$, the pair $(W_L,L)$ is a Coxeter system. Because
$W^L$ is a complete set of left coset representatives of $W_L$ in $W$, left
multiplication by $x_L$ defines an embedding of $\BBC W_L$ into $\BBC
W$. For $I\subseteq L$ define $W_L^I= W_L\cap W^I$ and $x^L_I=\sum_{ w\in
  W_L^I} w$. Then $\{\, x^L_I \mid I\subseteq L\,\}$ is a basis of
$\Sigma(W_L)$. It is well-known and easy to prove that $W^L W_L^I= W^I$, and
so $x_Lx_I^L=x_I$. If $n$ is in $N_L$, then $n(\Delta_L)= \Delta_L$ and so
by Lemma \ref{lem:shift}(a) we have $x_Ln=x_L$.  It follows that $x_L \BBC
W_L$ is stable under right multiplication by elements of $N_W(W_L)$.

It is straightforward to check that for $a$ in $\BBC W_L$ and $wn$ in
$W_LN_L$, the assignment $(a, wn) \mapsto a\cdot wn =n\inverse awn$ defines
a right action of the group $N_W(W_L)=W_LN_L$ on $\BBC W_L$. Then
\[
x_L (a\cdot wn) =x_Ln\inverse awn =x_Lawn
\]
and so left multiplication by $x_L$ defines an $N_W(W_L)$-equivariant
embedding of $\BBC W_L$ into $\BBC W$. For later reference we record this
fact in the following lemma.

\begin{lemma}\label{lem:Nequiv}
  Suppose that $L$ is a subset of $S$. Then $N_W(W_L)$ acts on $\BBC W_L$ by
  $a\cdot wn =n\inverse awn$, for $a \in \BBC W_L$, $w$ in $W_L$, and $n \in
  N_L$, and left multiplication by $x_L$ defines an $N_W(W_L)$-equivariant
  embedding of $\BBC W_L$ into $\BBC W$.
\end{lemma}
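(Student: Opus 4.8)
The plan is to establish the three parts of the statement separately, following the line of reasoning already sketched in the paragraph preceding the lemma. First I would pin down the group structure. Since $W_L=W_{X_L}$ with $X_L\in L(\CA)$, the complement result recalled in \S\ref{central} gives $N_W(W_L)=W_L\rtimes N_L$, so every element of $N_W(W_L)$ has a unique factorization $wn$ with $w\in W_L$ and $n\in N_L$. This makes the formula $a\cdot wn=n\inverse awn$ unambiguous, and because $n\in N_L$ normalizes $W_L$, the element $n\inverse awn$ again lies in $\BBC W_L$; thus for each group element we obtain a $\BBC$-linear endomorphism of $\BBC W_L$.

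Next I would check that this assignment is a (right) action. Clearly $a\cdot 1=a$. For the composition axiom, write $g_i=w_in_i$ and use the semidirect-product identity $g_1g_2=\bigl(w_1\,{}^{n_1}w_2\bigr)(n_1n_2)$, with ${}^{n_1}w_2\in W_L$; a direct computation then shows that both $(a\cdot g_1)\cdot g_2$ and $a\cdot(g_1g_2)$ equal $n_2\inverse n_1\inverse a w_1 n_1 w_2 n_2$. For the embedding, I would use that $W^L$ is a complete set of left coset representatives of $W_L$, so that $(d,w)\mapsto dw$ is a bijection $W^L\times W_L\to W$; hence the coefficient of $dw$ in $x_La=\sum_{d\in W^L}da$ reads off the coefficient of $w$ in $a$, which forces left multiplication by $x_L$ to be injective.

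Finally, for the equivariance --- where $N_W(W_L)$ acts on $\BBC W$ by right multiplication --- I would observe that for $n\in N_L$ we have $n\inverse(\Delta_L)=\Delta_L$, so Lemma~\ref{lem:shift}(a) applied with $K=L$ and $d=n\inverse$ gives $x_Ln\inverse=x_{L^{n\inverse}}=x_L$. Therefore
\[
x_L(a\cdot wn)=x_Ln\inverse awn=x_Lawn=(x_La)(wn),
\]
which is the asserted equivariance and, as a by-product, shows that $x_L\BBC W_L$ is stable under right multiplication by $N_W(W_L)$. There is no serious obstacle in any of this; the one spot that calls for care is the composition axiom, where the conjugate ${}^{n_1}w_2$ coming from the semidirect-product multiplication must be tracked correctly, and the rest is a formal consequence of Lemma~\ref{lem:shift}(a) together with the factorization $W=W^L\cdot W_L$.
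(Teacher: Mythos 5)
Your proposal is correct and follows essentially the same route as the paper, which proves the lemma in the paragraph preceding its statement: the unique factorization from the semidirect product $N_W(W_L)=W_L\rtimes N_L$, the invariance $x_Ln=x_L$ via Lemma~\ref{lem:shift}(a), injectivity from $W=W^L\cdot W_L$, and the displayed equivariance calculation all appear there. The only difference is that you spell out the composition axiom for the action, which the paper dismisses as ``straightforward to check''; your check is accurate.
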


Recall that the quasi-idempotents $e_I^\sigma$ are defined relative to the
ambient set $S$ and the function $\sigma$. Define
\[ 
{\sigma_L}\colon 2^L\to \BBR_{>0}\qquad \text{by}\qquad{\sigma_L}(I)=
m_{IL}^\sigma.
\]
Then for $J\subseteq L$ we have the quasi-idempotent $e_J^{\sigma_L}=
\sum_{I\subseteq L} n_{IJ}^{\sigma_L} x_I^L$ in $\BBC W_L$ defined relative
to the set $L$ and the function ${\sigma_L}$.

\begin{lemma}\label{lem:restrict}
  Suppose that $I$, $J$, and $L$ are subsets of $S$ with $I, J\subseteq
  L$. Then
  \begin{enumerate}
  \item[\sl(a)] $m_{IJ}^{\sigma_L}= m_{IJ}^\sigma$ and $n_{IJ}^{\sigma_L}=
    n_{IJ}^\sigma$;
  \item[\sl(b)] $x_Le_J^{\sigma_L} = e_J^\sigma$; and
  \item[\sl(c)] $n\inverse e_J^{\sigma_L} n= e_{J^n}^{\sigma_L}$ for $n$ in
    $N_L$.
  \end{enumerate}
\end{lemma}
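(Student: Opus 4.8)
The plan is to prove all three statements together, reducing everything to the key identity that $m_{JK}^{\sigma_L}=m_{JK}^\sigma$ whenever $J\subseteq K\subseteq L$, and then bootstrapping up through the $n$-quantities, the quasi-idempotents, and the conjugation action of $N_L$. The crucial point for part (a) is that the sum defining $m_{IJ}^{\sigma_L}$ runs over $w\in W_L^J=W_L\cap W^J$ with $w(\Delta_I)\subseteq\Delta_L$, with weights ${\sigma_L}(\,{}^wI)=m_{\,{}^wI\,L}^\sigma$, whereas $m_{IJ}^\sigma$ runs over $w\in W^J$ with $w(\Delta_I)\subseteq\Delta$, with weights $\sigma({}^wI)$.

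First I would establish part (a). For $J\subseteq L$ and $w\in W^J_+$ with $w(\Delta_I)\subseteq\Delta$, one can factor $w=w^Lw_L$ with $w^L\in W^L$ and $w_L\in W_L$ in the usual way; since $w(\Delta_I)\subseteq\Delta$ and $I\subseteq L$, I expect $w_L\in W_L^J$ and $w^L$ to be precisely the unique element of $W^{({}^{w_L}I)}_+\cap W^L$ sending $\Delta_{\,{}^{w_L}I}$ into $\Delta$ — this is where Lemma~\ref{lem:shift}(a) and the description $W^K=\{w\mid w(\Delta_K)\subseteq\Phi^+\}$ come in. Grouping the sum over $w\in W^J_+\cap W^J$ according to its $W_L$-part $w_L$, the contribution of each fibre is $\sum_{w^L\in W^L,\ w^L(\Delta_{\,{}^{w_L}I})\subseteq\Delta}\sigma({}^{w^L}({}^{w_L}I))= m_{\,{}^{w_L}I\,L}^\sigma={\sigma_L}({}^{w_L}I)$, which is exactly the weight attached to $w_L$ in the definition of $m_{IJ}^{\sigma_L}$. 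Summing over $w_L\in W_L^J$ with $w_L(\Delta_I)\subseteq\Delta_L$ then yields $m_{IJ}^\sigma=m_{IJ}^{\sigma_L}$. The equality $n_{IJ}^{\sigma_L}=n_{IJ}^\sigma$ then follows by uniqueness: the $n$-quantities are obtained by inverting the respective triangular systems $x_K=\sum_J m_{JK}^\sigma e_J^\sigma$ and $x_K^L=\sum_J m_{JK}^{\sigma_L}e_J^{\sigma_L}$ over the index set $\{J\subseteq L\}$, and these systems have the same matrix by what we just proved, so $n_{IJ}^{\sigma_L}=n_{IJ}^\sigma$ for $I,J\subseteq L$.

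Part (b) is then immediate from part (a) together with the identity $x_Lx_I^L=x_I$ recorded just before the lemma:
\[
x_L e_J^{\sigma_L}= x_L\sum_{I\subseteq L} n_{IJ}^{\sigma_L} x_I^L
= \sum_{I\subseteq L} n_{IJ}^{\sigma} x_I = e_J^\sigma,
\]
using that $n_{IJ}^\sigma=0$ unless $I\subseteq J\subseteq L$ so the sum over $I\subseteq L$ agrees with the sum over $I\subseteq S$ defining $e_J^\sigma$.

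For part (c), I would note that the function ${\sigma_L}$ on $2^L$ is $N_L$-invariant: for $n\in N_L$ we have $n(\Delta_L)=\Delta_L$, hence by Lemma~\ref{lem:shift}(b) (applied with $K=S$, $d=n$, which is legitimate since $n\inverse(\Delta_S)=\Delta_S\subseteq\Delta$) we get ${\sigma_L}(I^n)=m_{I^nL}^\sigma=m_{I^nL^n}^\sigma=m_{IL}^\sigma={\sigma_L}(I)$, using $L^n=L$. Now $n$ normalizes $W_L$ and $n(\Delta_L)=\Delta_L$, so $n$ plays, relative to the Coxeter system $(W_L,L)$, exactly the role that an element of $N(W_L)$ plays; thus Lemma~\ref{lem:n}, applied inside $\BBC W_L$ with the $N_L$-invariant function ${\sigma_L}$, gives $n\inverse e_J^{\sigma_L} n=e_{J^n}^{\sigma_L}$. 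The only care needed is to check that $n$ acting on $W_L$ by conjugation preserves length with respect to $L$ and satisfies $n\inverse W_L^I n=W_L^{I^n}$; both follow from $n(\Delta_L)=\Delta_L$, so $n$ permutes $\Phi_L^+$ and the simple reflections of $W_L$. I expect part (a) to be the main obstacle, specifically the bookkeeping in the factorization $w=w^Lw_L$ and the verification that it sets up a bijection between the index set of the $\sigma$-sum and the fibred index set of the $\sigma_L$-sum with matching weights; parts (b) and (c) are then short deductions.
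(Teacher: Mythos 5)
Your proposal is correct in outline and matches the paper on part~(b), but takes genuinely different routes on parts~(a) and~(c), so it is worth comparing. For part~(a), the paper simply cites \cite[Theorem~7.5]{bergeronbergeronhowletttaylor:decomposition} for the identity $m_{IJ}^{\sigma_L}=m_{IJ}^\sigma$, whereas you reconstruct a proof from the factorization $W^J=W^LW_L^J$, grouping the index set $\{w\in W^J: w(\Delta_I)\subseteq\Delta\}$ by its $W_L$-component. This works: the key point (which you gesture at but should state) is that if $w=w^Lw_L$ with $w^L\in W^L$ and $w(\Delta_I)\subseteq\Delta$, then $w_L(\Delta_I)\subseteq\Delta_L$, because $w^L$ sends $\Phi_L^+$ into $\Phi^+$ and can only map a root of height $\geq 2$ in $\Phi_L$ to a root of height $\geq 2$ in $\Phi$. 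Once that is in place, the fibre sum over $w^L$ is exactly $m_{{}^{w_L}I,L}^\sigma=\sigma_L({}^{w_L}I)$, giving the identity. For part~(c), the paper compares $e_J^\sigma n$ computed two ways through the embedding $\ell_{x_L}\colon\BBC W_L\hookrightarrow \BBC W$ and concludes by injectivity; you instead relativize Lemma~\ref{lem:n} to the pair $(W_L,L)$ after showing $\sigma_L$ is $N_L$-invariant, which is a cleaner conceptual argument.

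However, your justification of the $N_L$-invariance of $\sigma_L$ contains a genuine error. You apply Lemma~\ref{lem:shift}(b) with $K=S$ and $d=n$, asserting that $n^{-1}(\Delta_S)=\Delta_S$. This is false for any nontrivial $n\in N_L$: the Howlett complement $N_L$ is a subgroup of $W$, and the only element of $W$ stabilizing $\Delta$ is the identity. (For instance, in $S_4$ with $L=\{s_1\}$ the complement $N_L$ is generated by $s_3$, and $s_3(\alpha_3)=-\alpha_3\notin\Delta$.) What you actually need is that $n^{-1}(\Delta_L)\subseteq\Delta$, which does hold for $n\in N_L$, so the correct application is Lemma~\ref{lem:shift}(b) with $K=L$ and $d=n$: this gives $m_{I^nL^n}^\sigma=m_{IL}^\sigma$ for $I\subseteq L$, and then $L^n=L$ yields $\sigma_L(I^n)=\sigma_L(I)$ as required. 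With that one-line correction the argument for (c) is sound, and the remainder of your proposal goes through.
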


\begin{proof}
  It is shown in \cite[Theorem 7.5]
  {bergeronbergeronhowletttaylor:decomposition} that $m_{IJ}^{\sigma_L}=
  m_{IJ}^\sigma$. It then follows from the definitions that $n_{IJ}^
  {\sigma_L} = n_{IJ}^\sigma$. This proves (a). Now
  \[
  x_Le_J^{\sigma_L} = x_L\sum_{\substack{ I\\ I\subseteq J}}
  n_{IJ}^{\sigma_L} x_I^L = \sum_{\substack{ I\\ I\subseteq J}}
  n_{IJ}^{\sigma_L} x_L x_I^L = \sum_{\substack{ I\\ I\subseteq J}}
  n_{IJ}^\sigma x_I= e_J^\sigma
  \]
  and so (b) holds.

  Suppose $n$ is in $N_L$. Then using (b) and Lemma \ref{lem:shift} we have
  \[
  e_J^\sigma n= x_Le_J^{\sigma_L} n= (x_Ln)(n\inverse e_J^{\sigma_L} n) =
  x_L (n\inverse e_J^{\sigma_L} n)
  \]
  and
  \[
  e_J^\sigma n= e_{J^n}^\sigma = x_{L^n} e_{J^n}^{\sigma_L} = x_{L} e_{
    J^n}^{\sigma_L}.
  \]
  Now $n\inverse e_J^{\sigma_L} n$ and $e_{J^n}^{\sigma_L}$ are both in
  $\BBC W_L$ and $x_L (n\inverse e_J^{\sigma_L} n)= x_{L}
  e_{J^n}^{\sigma_L}$, so we conclude from Lemma \ref{lem:Nequiv} that
  $n\inverse e_J^{\sigma_L} n= e_{J^n}^{\sigma_L}$. This proves (c).
\end{proof}

We conclude this section with a description of the quasi-idempotents
$e_K^\sigma$ in the case when $W$ is reducible.

Suppose that $W$ is reducible, say $W\cong W_1\times W_2$. Then $S=
S_1\amalg S_2$ is the disjoint union of $S_1$ and $S_2$ where $W_1= \langle
S_1 \rangle$ and $W_2= \langle S_2 \rangle$, and $\Delta=\Delta_1\amalg
\Delta_2$, where $\Delta_i=\{\, \alpha\mid s_\alpha\in S_i\,\}$ for $i=1,2$.

Suppose $K$ is a subset of $S$. Then $K=K_1\amalg K_2$ where $K_i=K\cap S_i$
for $i=1,2$. Every element in $W$ has a unique expression as a product
$w_1w_2$ with $w_1$ in $W_1$ and $w_2$ in $W_2$. Then $w_1w_2$ is in $W_K$
if and only if $w_1$ is in $W_{K_1}$ and $w_2$ is in $W_{K_2}$. Moreover,
$w_1w_2$ is in $W^K$ if and only if $w_1$ is in $W^{K_1}$ and $w_2$ is in
$W^{K_2}$. It follows that $x_K=x_{K_1} x_{K_2}$.  If $J$ is a subset of
$K$, then $\Delta_J= \Delta_{J_1} \amalg \Delta_{J_2}$ and
$w_1w_2(\Delta_J)\subseteq \Delta$ if and only if $w_1(\Delta_{J_1})
\subseteq \Delta_1$ and $w_2(\Delta_{J_2})\subseteq \Delta_2$.  Similarly,
${}^{w_1w_2} (J_1\amalg J_2) = {}^{w_1}{J_1} \amalg {}^{w_2}{J_2}$.

Now suppose that $\sigma\colon 2^S\to \BBR_{>0}$ has the property that
$\sigma(J_1\amalg J_2)= \sigma(J_1) \sigma(J_2)$ for $J_1\subseteq S_1$ and
$J_2\subseteq S_2$. Then for $J\subseteq K\subseteq S$ we have
\begin{align*}
  m_{JK}^\sigma &= \sum_{\substack{w\in W^K\\ w(\Delta_J)\subseteq
      \Delta}} \sigma({}^{w}J) \\
  &= \sum_{\substack{w_1\in W^{K_1},\, w_2 \in W^{K_2}\\
      w_1(\Delta_{J_1})\subseteq \Delta_1,\, w_2(\Delta_{J_2})\subseteq
      \Delta_2}}
  \sigma({}^{w_1}J_1 {\textstyle\amalg} {}^{w_2}J_2)  \\
  &= \sum_{\substack{w_1\in W^{K_1}\\ w_1(\Delta_{J_1}) \subseteq \Delta_1}}
  \sum_{\substack{w_2 \in W^{K_2}\\ w_2(\Delta_{J_2}) \subseteq \Delta_2}}
  \sigma({}^{w_1}J_1) \sigma({}^{w_2}J_2)  \\
  &= m_{J_1K_1}^\sigma m_{J_2K_2}^\sigma \\
  &= m_{J_1K_1}^{\sigma_1} m_{J_2K_2}^{\sigma_2}
\end{align*}
where $\sigma_i$ is the restriction of $\sigma$ to $2^{S_i}$ for $i=1,2$.
Conversely, if we are given functions $\sigma_i\colon 2^{S_i}\to \BBR_{>0}$
for $i=1,2$ and define $\sigma\colon 2^S\to \BBR_{>0}$ by $\sigma(K)=
\sigma(K_1) \sigma_2(K_2)$, then $m_{J_1K_1}^{\sigma_1}
m_{J_2K_2}^{\sigma_2} = m_{J_1K_1}^{\sigma} m_{J_2K_2}^{\sigma} =
m_{JK}^{\sigma}$.

With $\sigma$ as above, set $f_J^\sigma=e_{J_1}^\sigma e_{J_2}^{\sigma}$ for
$J= J_1\amalg J_2 \subseteq S$. Then
\[
\sum_{J\subseteq K} m_{JK}^\sigma f_J^\sigma = \sum_{ \substack{
    J_1\subseteq K_1 \\J_2\subseteq K_2}} m_{J_1K_1}^\sigma
m_{J_2K_2}^\sigma e_{K_1}^\sigma e_{K_2}^{\sigma} = x_{K_1} x_{K_2} = x_K
\]
and so $f_J^\sigma= e_J^\sigma$. This proves the following proposition.

\begin{proposition}\label{pro:red}
  Suppose $W\cong W_1\times W_2$ is reducible and $S=S_1\amalg S_2$ where
  $W_1=\langle S_1 \rangle$ and $W_2=\langle S_2 \rangle$. Suppose in
  addition that $\sigma\colon 2^S\to \BBR_{>0}$ has the property that for
  $J\subseteq S$, $\sigma(J)= \sigma(J\cap S_1) \sigma(J\cap S_2)$.  Then
  \[
  e_J^\sigma= e_{J\cap S_1}^\sigma e_{J\cap S_2}^{\sigma} \quad
  \text{and}\quad e_J= e_{J\cap S_1} e_{J\cap S_2}.
  \]
\end{proposition}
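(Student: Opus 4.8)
The plan is to pin down $e_J^\sigma$ by the uniqueness built into~(\ref{eq:system}). Since $m_{JJ}^\sigma\neq 0$ for all $J$ and $m_{JK}^\sigma=0$ unless $J\subseteq K$, the coefficient array $(m_{JK}^\sigma)$ is invertible, so $\{e_J^\sigma\mid J\subseteq S\}$ is the \emph{unique} family in $\BBC W$ with $x_K=\sum_{J\subseteq S}m_{JK}^\sigma e_J^\sigma$ for every $K\subseteq S$. Write $K=K_1\amalg K_2$, $J=J_1\amalg J_2$ with $K_i=K\cap S_i$, $J_i=J\cap S_i$, and let $x_{K_i}$ and $e_{J_i}^\sigma$ henceforth denote the descent-algebra element and the BBHT quasi-idempotent of the factor Coxeter system $(W_i,S_i)$, the latter taken with respect to the restriction of $\sigma$ to $2^{S_i}$. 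The strategy is to put $f_J^\sigma:=e_{J_1}^\sigma e_{J_2}^\sigma$ and verify that $\{f_J^\sigma\}$ satisfies the same system~(\ref{eq:system}); uniqueness then yields $f_J^\sigma=e_J^\sigma$, which is the first formula. The second formula is the case $\sigma\equiv1$, for which the hypothesis on $\sigma$ is automatic.

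Two product decompositions carry the argument. First, $x_K=x_{K_1}x_{K_2}$: in $W\cong W_1\times W_2$ an element $w=w_1w_2$ lies in $W^K$ exactly when $w_1\in W^{K_1}$ and $w_2\in W^{K_2}$, so multiplication is a bijection $W^{K_1}\times W^{K_2}\to W^K$ and the sums factor. Second --- and here the hypothesis on $\sigma$ is used --- the structure constants are multiplicative,
\[
m_{JK}^\sigma=m_{J_1K_1}^\sigma\,m_{J_2K_2}^\sigma\qquad(J\subseteq K\subseteq S).
\]
This comes from expanding the defining sum over $w=w_1w_2\in W^K$: because $\Delta=\Delta_1\amalg\Delta_2$, the condition $w(\Delta_J)\subseteq\Delta$ decouples into $w_i(\Delta_{J_i})\subseteq\Delta_i$; one has ${}^{w}J={}^{w_1}J_1\amalg{}^{w_2}J_2$ with ${}^{w_i}J_i\subseteq S_i$; and then $\sigma({}^{w}J)=\sigma({}^{w_1}J_1)\,\sigma({}^{w_2}J_2)$, so the sum breaks into the product of the two sums defining $m_{J_1K_1}^\sigma$ and $m_{J_2K_2}^\sigma$. (This is the computation displayed just before the statement.)

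Granting both facts, for every $K\subseteq S$,
\[
\sum_{J\subseteq K}m_{JK}^\sigma f_J^\sigma
=\sum_{J_1\subseteq K_1}\sum_{J_2\subseteq K_2}m_{J_1K_1}^\sigma m_{J_2K_2}^\sigma\,e_{J_1}^\sigma e_{J_2}^\sigma
=\Bigl(\sum_{J_1\subseteq K_1}m_{J_1K_1}^\sigma e_{J_1}^\sigma\Bigr)\Bigl(\sum_{J_2\subseteq K_2}m_{J_2K_2}^\sigma e_{J_2}^\sigma\Bigr)
=x_{K_1}x_{K_2}=x_K,
\]
where the middle step is just distributivity (no commutativity between the two bracketed factors is required) and each bracketed sum is the $K_i$-th instance of~(\ref{eq:system}) for $(W_i,S_i)$. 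Hence $\{f_J^\sigma\}$ solves~(\ref{eq:system}) and $f_J^\sigma=e_J^\sigma$. I do not expect a genuine obstacle: the whole thing is bookkeeping, and the one place needing care is the multiplicativity of $m_{JK}^\sigma$, where one must track simultaneously that membership in $W^K$, the containment $w(\Delta_J)\subseteq\Delta$, and the conjugation $J\mapsto{}^{w}J$ all factor along $w=w_1w_2$ --- all standard features of a reducible Coxeter system.
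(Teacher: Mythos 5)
Your proof is correct and takes the same route as the paper: both arguments establish the factorizations $x_K=x_{K_1}x_{K_2}$ and $m_{JK}^\sigma=m_{J_1K_1}^\sigma m_{J_2K_2}^\sigma$, set $f_J^\sigma=e_{J_1}^\sigma e_{J_2}^\sigma$, verify that $\{f_J^\sigma\}$ solves the triangular system~(\ref{eq:system}), and conclude by uniqueness. Your write-up is slightly more careful in making the uniqueness step explicit and in declaring that $x_{K_i}$ and $e_{J_i}^\sigma$ are taken inside the factor $(W_i,S_i)$ with the restricted weight $\sigma|_{2^{S_i}}$, a convention the paper uses implicitly.
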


%%%%%%%%%%%%%%%%%%%%%%%%%%%%%%%%%%%%%%%%%%%%%%%%%%%%%%%%%%%%%%%%%%%%%%
%%%%%%%%%%%%% \S4 $E_\lambda$ is an induced representation
%%%%%%%%%%%%%%%%%%%%%%%%%%%%%%%%%%%%%%%%%%%%%%%%%%%%%%%%%%%%%%%%%%%%%%
\section{$E_\lambda$ is an induced representation} \label{induced}

Suppose $\lambda$ is in $\Lambda$ and $\sigma\colon 2^S\to
\BBR_{>0}$. Define
\[
E_\lambda^\sigma= e_\lambda^\sigma \BBC W
\]
to be the right ideal in $\BBC W$ generated by $e_\lambda^\sigma$.
Similarly, using the notation in~(\ref{eqn:sigma=1}), define
\[
E_\lambda= e_\lambda \BBC W.
\]
We have seen in \S\ref{sec:os} that $A_\lambda \cong \Ind_{N_W(W_X)}^W
(A_X)$ for $X$ in $\lambda$. In this section we show that $E_\lambda^\sigma$
has a similar description as an induced representation, and we analyze how
$E_\lambda^\sigma$ depends on the choice of $\sigma$. In particular, it is shown
in Corollary \ref{cor:conc} that for $L$ in $S_\lambda$,
\[
E_\lambda \cong \Ind_{N_W (W_L)}^W \left( e_L \BBC W_L \right) \cong
\Ind_{N_W (W_L)}^W \left( e_L^L \BBC W_L \right).
\]

We begin with a lemma which follows immediately from~(\ref{eq:multIlambda}).

\begin{lemma}\label{lem:lambda}
  Suppose that $\lambda$ is in $\Lambda$ and $I$ is in $S_\lambda$. Then
  $E_\lambda^\sigma= e_I^\sigma \BBC W$.
\end{lemma}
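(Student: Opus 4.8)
The statement to prove is Lemma~\ref{lem:lambda}: for $\lambda\in\Lambda$ and $I\in S_\lambda$, one has $E_\lambda^\sigma = e_I^\sigma\,\BBC W$. The plan is simply to unwind the definitions and apply~(\ref{eq:multIlambda}), which records that $e_\lambda^\sigma e_I^\sigma = e_I^\sigma$ and $e_I^\sigma e_\lambda^\sigma = \sigma(\lambda)\inverse e_\lambda^\sigma$ whenever $I\in S_\lambda$. There is essentially no obstacle here; the content is entirely in the already-established quasi-idempotent relation.

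First I would observe one inclusion. From $e_\lambda^\sigma e_I^\sigma = e_I^\sigma$ we get $e_I^\sigma\,\BBC W = (e_\lambda^\sigma e_I^\sigma)\,\BBC W \subseteq e_\lambda^\sigma\,\BBC W = E_\lambda^\sigma$, since $e_I^\sigma\,\BBC W$ consists of elements of the form $e_\lambda^\sigma(e_I^\sigma w)$ with $w\in W$, each lying in $e_\lambda^\sigma\,\BBC W$. For the reverse inclusion I would use the second relation in~(\ref{eq:multIlambda}): multiplying $e_I^\sigma e_\lambda^\sigma = \sigma(\lambda)\inverse e_\lambda^\sigma$ by $\sigma(\lambda)$ gives $e_\lambda^\sigma = \sigma(\lambda)\,e_I^\sigma e_\lambda^\sigma$, so $e_\lambda^\sigma \in e_I^\sigma\,\BBC W$, whence $E_\lambda^\sigma = e_\lambda^\sigma\,\BBC W \subseteq e_I^\sigma\,\BBC W$. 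Combining the two inclusions yields the claimed equality.

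Since both displayed relations in~(\ref{eq:multIlambda}) are hypotheses available from the preceding section, this is a two-line argument and there is no genuinely hard step. If one wanted to phrase it even more economically, one could note that $e_I^\sigma$ and $e_\lambda^\sigma$ generate each other as one-sided ideals — each is a scalar multiple of a product of the other with itself on the appropriate side — so they generate the same right ideal, and $E_\lambda^\sigma = e_I^\sigma\,\BBC W$ follows at once. The only point to be mildly careful about is that $\sigma(\lambda) = \sum_{J\in S_\lambda}\sigma(J)$ is a strictly positive real number (because $\sigma$ takes values in $\BBR_{>0}$), so dividing by $\sigma(\lambda)$ is legitimate; this is already implicit in the definition of $e_\lambda^\sigma$ as an idempotent, so no further comment is needed.
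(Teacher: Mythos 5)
Your argument is correct and is exactly what the paper intends: the paper states that the lemma ``follows immediately from~(\ref{eq:multIlambda}),'' and your two inclusions are precisely the unwinding of that remark using $e_\lambda^\sigma e_I^\sigma = e_I^\sigma$ and $e_I^\sigma e_\lambda^\sigma = \sigma(\lambda)\inverse e_\lambda^\sigma$.
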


The next proposition shows that up to isomorphism, $E_\lambda ^\sigma$ does
not depend on $\sigma$.

\begin{proposition}\label{pro:elambda}
  Suppose that $\lambda$ is in $\Lambda$ and that $\sigma$ and $\sigma_1$
  are functions from $2^S$ to $\BBR_{>0}$. Then there is a unit $u$ in
  $\Sigma(W)$ such that left multiplication by $u$ defines an isomorphism of
  right $\BBC W$-modules $E_\lambda^\sigma \cong E_\lambda^{\sigma_1}$.
\end{proposition}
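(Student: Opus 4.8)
The plan is to exploit the fact that $\Sigma(W)$ is a finite-dimensional algebra in which both $e_\lambda^\sigma$ and $e_\lambda^{\sigma_1}$ are idempotents, and to compare the decompositions $1 = \sum_{\mu} e_\mu^\sigma = \sum_{\mu} e_\mu^{\sigma_1}$. First I would record that these are two complete sets of pairwise orthogonal idempotents in $\Sigma(W)$, parametrized by the \emph{same} index set $\Lambda$, and that the indexing is not arbitrary: by the defining triangular system~(\ref{eq:system}) and the formulas $e_K^\sigma = \sum_{J\subseteq K} n_{JK}^\sigma x_J$ with $n_{KK}^\sigma = (m_{KK}^\sigma)\inverse \ne 0$, both $e_K^\sigma$ and $e_K^{\sigma_1}$ have the form $x_K + (\text{lower terms})$ up to a nonzero scalar, hence $e_\lambda^\sigma$ and $e_\lambda^{\sigma_1}$ agree modulo the span of $\{x_K : K\notin S_\lambda, \ |K| \le d_\lambda\}$ and the radical; the point is that they lie in the same block, i.e. $\Sigma(W) e_\lambda^\sigma \Sigma(W) = \Sigma(W) e_\lambda^{\sigma_1} \Sigma(W)$ as two-sided ideals. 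This is what will make the two idempotents conjugate.

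The key step is then a standard lifting-of-idempotents / conjugacy argument. Since $e_\lambda^\sigma$ and $e_\lambda^{\sigma_1}$ are idempotents in the algebra $\Sigma(W)$ that are \emph{associate} — they generate the same two-sided ideal, equivalently the corresponding left (or right) $\Sigma(W)$-modules $\Sigma(W)e_\lambda^\sigma$ and $\Sigma(W)e_\lambda^{\sigma_1}$ are isomorphic because each $e_\mu^\sigma$ is primitive in $\Sigma(W)$ and corresponds under both systems to the same simple module indexed by $\mu$ — there is a unit $u$ in $\Sigma(W)$ with $u\, e_\lambda^\sigma\, u\inverse = e_\lambda^{\sigma_1}$. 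Concretely one can build $u$ from the elements $a = e_\lambda^{\sigma_1} e_\lambda^\sigma$ and $b = e_\lambda^\sigma e_\lambda^{\sigma_1}$: using orthogonality of the full systems one checks $a$ restricts to an isomorphism $\Sigma(W)e_\lambda^\sigma \to \Sigma(W)e_\lambda^{\sigma_1}$ with inverse a scalar multiple of $b$, and then the classical recipe $u = 1 - e_\lambda^\sigma - e_\lambda^{\sigma_1} + a + b$ (suitably normalized) is a unit conjugating one to the other; invertibility follows because $u$ acts as the identity on the complementary summand $\bigoplus_{\mu\ne\lambda}\Sigma(W)e_\mu$ and as the composite of two mutually inverse isomorphisms on the $\lambda$-summand.

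Finally, transport this to $\BBC W$: left multiplication by $u\in\Sigma(W)\subseteq\BBC W$ is a right $\BBC W$-module homomorphism $\BBC W\to\BBC W$, it is bijective because $u$ is a unit in $\Sigma(W)$ hence in $\BBC W$, and it sends $E_\lambda^\sigma = e_\lambda^\sigma\BBC W$ onto $u e_\lambda^\sigma\BBC W = e_\lambda^{\sigma_1} u\BBC W = e_\lambda^{\sigma_1}\BBC W = E_\lambda^{\sigma_1}$, using $u e_\lambda^\sigma = e_\lambda^{\sigma_1}u$ and that $u\BBC W = \BBC W$. I expect the main obstacle to be the bookkeeping that makes ``$e_\lambda^\sigma$ and $e_\lambda^{\sigma_1}$ are associate idempotents'' precise and uniform over all $\lambda$ simultaneously — that is, producing a \emph{single} unit $u$ handling every block at once rather than one per $\lambda$ — which is handled cleanly by the $1 - \sum e + \sum(ab\text{-terms})$ formula applied to the whole family, but requires care with the cross terms $e_\mu^\sigma e_\nu^{\sigma_1}$ for $\mu\ne\nu$. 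These vanish after multiplying by the appropriate orthogonal idempotents, so the argument goes through; one should also note that everything takes place inside the commutative-enough setting of $\Sigma(W)$, so no subtlety about left versus right ideals arises beyond what Lemma~\ref{lem:lambda} already packages.
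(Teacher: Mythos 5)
Your overall strategy — conjugate $e_\lambda^\sigma$ to $e_\lambda^{\sigma_1}$ by a unit of $\Sigma(W)$ and then push the isomorphism forward to $\BBC W$ by left multiplication — is the right one, and the last paragraph (transporting to $\BBC W$) is fine. The gap is in the middle: the step where you justify that the two idempotents are conjugate.

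First, ``lying in the same block,'' i.e.\ $\Sigma(W) e_\lambda^\sigma \Sigma(W) = \Sigma(W) e_\lambda^{\sigma_1} \Sigma(W)$, is far too weak. For $W$ irreducible the descent algebra is typically a single block, so \emph{every} primitive idempotent generates the same two-sided ideal, yet $e_\lambda^\sigma$ and $e_\mu^\sigma$ for $\lambda\ne\mu$ are certainly not conjugate. Second, the triangularity observation does not deliver what you need. You want $\theta(e_\lambda^\sigma) = \theta(e_\lambda^{\sigma_1})$ in $\Sigma(W)/\rad(\Sigma(W))$ (equality, not just conjugacy), but writing $e_K^\sigma = n_{KK}^\sigma x_K + \text{lower terms}$ with $n_{KK}^\sigma = (m_{KK}^\sigma)^{-1}$ and $e_\lambda^\sigma = \sum_{I\in S_\lambda}\sigma(I)e_I^\sigma$ shows the coefficient of $x_I$ is $\sigma(I)/m_{II}^\sigma$, which \emph{depends on} $\sigma$ — so the ``leading parts'' of $e_\lambda^\sigma$ and $e_\lambda^{\sigma_1}$ do not visibly coincide. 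Third, the explicit formula $u = 1 - e_\lambda^\sigma - e_\lambda^{\sigma_1} + a + b$ only gives a unit if $a = e_\lambda^{\sigma_1} e_\lambda^\sigma$ and $b = e_\lambda^\sigma e_\lambda^{\sigma_1}$ compose to units on the respective corner algebras; this is not automatic for associate idempotents (take $E_{11}$ and $E_{22}$ in $M_2(\BBC)$: they are associate but $E_{22}E_{11}=0$), and your justification via ``orthogonality of the full systems'' does not establish it.

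What you are missing is the substantive BBHT result (their \S7) that $\theta(e_\lambda^\sigma) = \theta(e_\lambda^{\sigma_1})$ \emph{as elements} of $\Sigma(W)/\rad(\Sigma(W))$ — this uses the fact that $\Sigma(W)/\rad(\Sigma(W))$ is commutative and split, with one primitive idempotent per shape, together with the fact that the natural labeling by $\lambda$ is $\sigma$-independent. Once you have that congruence, the standard lifting theorem (the paper cites Th\'evenaz, Theorem 3.1) produces a unit $u \in 1 + \rad(\Sigma(W))$ with $u e_\lambda^\sigma = e_\lambda^{\sigma_1} u$; equivalently, the congruence is exactly what makes your $a$ and $b$ mutually inverse modulo the radical, so that the $u$-formula works. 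Your proposal in effect assumes this congruence without proving or citing it, and tries (unsuccessfully) to deduce it from triangularity. Also, the concern about a \emph{single} unit handling every $\lambda$ simultaneously is a red herring: the proposition is stated and proved one $\lambda$ at a time.
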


\begin{proof}
  Let $\rad (\Sigma(W))$ denote the Jacobson radical of $\Sigma(W)$ and let
  $\theta$ denote the natural projection from $\Sigma(W)$ to $\Sigma(W)/
  \rad(\Sigma(W))$. Bergeron, Bergeron, Howlett, and Taylor
  \cite[\S7]{bergeronbergeronhowletttaylor:decomposition} have shown that
  $\theta(e_\lambda^\sigma)= \theta(e_\lambda^{\sigma_1})$ is a primitive
  idempotent in $\Sigma(W)/ \rad(\Sigma(W))$. Thus, it follows from
  \cite[Theorem 3.1]{thevenaz:G-algebras} that there is a unit $u$ in
  $1+\rad(\Sigma(W))$ such that $ue_\lambda ^\sigma= e_\lambda ^{\sigma_1}
  u$.  Then left multiplication by $u$ defines an isomorphism of right $\BBC
  W$-modules $e_\lambda^\sigma \BBC W \cong e_\lambda^{\sigma_1} \BBC W$.
\end{proof}

Suppose that $N$ is a subgroup of $N(W)$, so $WN$ is a subgroup of
$N_{\U(V)}(W)$. Then as in Lemma~\ref{lem:Nequiv}, $WN$ acts on $\BBC W$ on
the right by $a\cdot wn =n\inverse awn$ for $a$ in $\BBC W$, $w$ in $W$, and
$n$ in $N$. If $N$ centralizes $e_\lambda ^\sigma$, then clearly
$E_\lambda^\sigma$ is a $WN$-submodule of $\BBC W$. It follows from Lemma
\ref{lem:n} that if $\sigma$ is constant on $N$-orbits in $2^S$, then $N$
centralizes $e_{\{S\}}^\sigma =e_S^\sigma$. More generally, if $\sigma$ is
constant on $N$-orbits in $2^S$ and $S_\lambda$ is $N$-stable, then $N$
centralizes $e_\lambda ^\sigma$.

Let $\Sigma(W)^{N}$ denote the algebra of $N$-invariants in $\Sigma(W)$.

\begin{proposition}\label{pro:xxx}
  Suppose that $\lambda$ is in $\Lambda$ and that $\sigma$ and $\sigma_1$
  are functions from $2^S$ to $\BBR_{>0}$ such that $S_\lambda$ is
  $N$-stable, $N$ centralizes $e_\lambda ^\sigma$, and $\sigma_1$ is
  constant on $N$-orbits in $2^S$.  Then there is a unit $v$ in
  $\Sigma(W)^N$ such that left multiplication by $v$ defines an isomorphism
  of right $\BBC WN$-modules $E_\lambda^\sigma$ and $E_\lambda^{\sigma_1}$.
\end{proposition}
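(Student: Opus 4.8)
The plan is to combine the two preceding propositions. By Proposition~\ref{pro:elambda}, applied with the pair $(\sigma,\sigma_1)$, there is a unit $u\in 1+\rad(\Sigma(W))$ with $u e_\lambda^\sigma= e_\lambda^{\sigma_1} u$, and left multiplication by $u$ is an isomorphism of right $\BBC W$-modules $E_\lambda^\sigma\cong E_\lambda^{\sigma_1}$. The problem is that $u$ need not lie in $\Sigma(W)^N$, so this is only a $\BBC W$-module isomorphism, not a $\BBC WN$-module isomorphism. The idea is to average $u$ over the finite group $N$ to produce an $N$-invariant unit $v$ that still conjugates $e_\lambda^\sigma$ to $e_\lambda^{\sigma_1}$.

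First I would record the relevant $N$-equivariance properties. Since $\sigma_1$ is constant on $N$-orbits in $2^S$ and $S_\lambda$ is $N$-stable, Lemma~\ref{lem:n} gives $n\inverse e_I^{\sigma_1} n = e_{I^n}^{\sigma_1}$ for all $n\in N$, hence $n\inverse e_\lambda^{\sigma_1} n = e_\lambda^{\sigma_1}$; that is, $e_\lambda^{\sigma_1}\in\Sigma(W)^N$. By hypothesis $e_\lambda^\sigma\in\Sigma(W)^N$ as well. Also $N$ acts on $\Sigma(W)$ by algebra automorphisms (conjugation by $n$ sends $x_I$ to $x_{I^n}$, by the computation in the proof of Lemma~\ref{lem:n}), and this action preserves $\rad(\Sigma(W))$. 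Now set
\[
v=\frac{1}{|N|}\sum_{n\in N} n\inverse u\, n .
\]
Then $v\in\Sigma(W)^N$, and since each summand lies in $1+\rad(\Sigma(W))$ and $\rad(\Sigma(W))$ is a subspace, we get $v\in 1+\rad(\Sigma(W))$; in particular $v$ is a unit in $\Sigma(W)$. Moreover, using $n\inverse e_\lambda^\sigma n = e_\lambda^\sigma$ and $n\inverse e_\lambda^{\sigma_1} n = e_\lambda^{\sigma_1}$ together with $u e_\lambda^\sigma = e_\lambda^{\sigma_1} u$,
\[
v\, e_\lambda^\sigma
= \frac{1}{|N|}\sum_{n\in N} n\inverse u\, n\, e_\lambda^\sigma
= \frac{1}{|N|}\sum_{n\in N} n\inverse u\, e_\lambda^\sigma\, n
= \frac{1}{|N|}\sum_{n\in N} n\inverse e_\lambda^{\sigma_1} u\, n
= e_\lambda^{\sigma_1}\, v .
\]
Hence left multiplication by $v$ maps $E_\lambda^\sigma = e_\lambda^\sigma\BBC W$ into $e_\lambda^{\sigma_1}\BBC W = E_\lambda^{\sigma_1}$, it is $\BBC W$-linear (right multiplication commutes with left multiplication by $v$), and it is bijective because $v$ is a unit of $\Sigma(W)\subseteq\BBC W$. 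Finally, it is $N$-equivariant for the action $a\cdot wn = n\inverse a wn$: since $v$ is $N$-fixed, $v\cdot(a\cdot wn) = n\inverse v a wn = (v a)\cdot wn$. Therefore left multiplication by $v$ is an isomorphism of right $\BBC WN$-modules $E_\lambda^\sigma\cong E_\lambda^{\sigma_1}$.

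The one point that needs a little care — and the main (though modest) obstacle — is verifying that the $N$-action on $\Sigma(W)$ used here is exactly the restriction of the action $a\cdot wn = n\inverse awn$ from the $\BBC WN$-module structure, and that conjugation by $n\in N$ really is an algebra automorphism of $\Sigma(W)$ preserving $\rad(\Sigma(W))$. The first is immediate from $n\inverse x_I n = x_{I^n}$ (proved inside Lemma~\ref{lem:n}), which shows $n\inverse \Sigma(W) n = \Sigma(W)$; the second follows since any algebra automorphism preserves the Jacobson radical. Once these are in place the averaging argument goes through verbatim, and no further computation is required.
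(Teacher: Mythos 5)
Your proof is correct and follows essentially the same averaging argument as the paper: take the unit $u\in 1+\rad(\Sigma(W))$ from Proposition~\ref{pro:elambda}, note that $N$ preserves $\Sigma(W)$ and $\rad(\Sigma(W))$ via $n\inverse x_I n = x_{I^n}$, set $v=\frac{1}{|N|}\sum_{n\in N} n\inverse un$, and use $N$-centrality of both $e_\lambda^\sigma$ and $e_\lambda^{\sigma_1}$ to conclude $v e_\lambda^\sigma = e_\lambda^{\sigma_1} v$. The extra verifications you add (that $e_\lambda^{\sigma_1}\in\Sigma(W)^N$ via Lemma~\ref{lem:n}, and the explicit $\BBC WN$-equivariance check) are correct and simply fill in details the paper leaves implicit.
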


\begin{proof}
  We saw in the proof of Proposition \ref{pro:elambda} that there is a unit
  $u$ in $1+\rad(\Sigma(W))$ such that $ue_\lambda ^\sigma= e_\lambda
  ^{\sigma_1} u$, and we observed in the proof of Lemma \ref{lem:n} that
  $n\inverse x_In= x_{I^n}$ for $I\subseteq S$ and $n$ in $N$. It follows
  that $\Sigma(W)$ and $\rad(\Sigma(W))$ are stable under conjugation by $N$
  and so $v= \frac 1{|N|} \sum_{n\in N} n\inverse un$ is in
  $1+\rad(\Sigma(W))$ and hence is a unit. By assumption, $N$ centralizes
  $e_\lambda ^\sigma$ and $e_\lambda ^{\sigma_1}$ and it follows that
  $ve_\lambda ^\sigma= e_\lambda ^{\sigma_1} v$. Therefore, left
  multiplication by $v$ defines a $N_{\U(V)}(W)$-module isomorphism
  $e_\lambda^\sigma \BBC W \cong e_\lambda^{\sigma_1} \BBC W$.
\end{proof}

In the next proposition, $\ell_x$ denotes left multiplication by $x$.

\begin{proposition}\label{pro:commutes}
  Suppose $L$ is a subset of $S$, $\sigma\colon 2^S\to \BBR_{>0}$,
  $\tau\colon 2^L\to \BBR_{>0}$, and $\tau$ is constant on $N_L$-orbits.
  Then there is a unit $v$ in $\Sigma(W_L)^{N_L}$ such that if $\mu_v=
  \ell_{x_L} \ell_v \ell_{x_L} \inverse$, then the diagram
  \[
  \xymatrix{ e_L^{\sigma_L} \BBC W_L \ar[r]^{\ell_v} \ar[d]_{\ell_{x_L}}
    & e_L^\tau \BBC W_L \ar[d]^{\ell_{x_L}} \\
    e_L ^\sigma \BBC W_L \ar[r]_{\mu_v} & x_L e_L^\tau \BBC W_L }
  \]
  is a commutative diagram of right $N_W(W_L)$-modules and
  $N_W(W_L)$-isomorphisms.
\end{proposition}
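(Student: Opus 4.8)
The plan is to obtain the top horizontal isomorphism by adapting Proposition~\ref{pro:xxx} to the parabolic subgroup $W_L$, to obtain the two vertical isomorphisms from Lemma~\ref{lem:Nequiv}, and then to read off the bottom map $\mu_v$ and the commutativity of the square directly from the definition $\mu_v=\ell_{x_L}\ell_v\ell_{x_L}\inverse$.

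First recall that for the shape $\{L\}$ of $W_L$ we have $S_{\{L\}}=\{L\}$, so $e_{\{L\}}^{\sigma_L}=\sigma_L(L)\,e_L^{\sigma_L}$ and $e_{\{L\}}^{\tau}=\tau(L)\,e_L^{\tau}$; thus $e_L^{\sigma_L}\BBC W_L$ and $e_L^{\tau}\BBC W_L$ are, up to these positive scalars, the right ideals of $\BBC W_L$ attached to that shape. The group $N_L$ normalizes $W_L$ and fixes $\Delta_L$ setwise, so it acts on the Coxeter system $(W_L,L)$ as a group of Coxeter-diagram symmetries --- equivalently, through $N(W_L)$ --- preserving the length function of $W_L$; hence the $W_L$-analogues of Lemma~\ref{lem:n} and of the identity $n\inverse x_I^L n = x_{I^n}^L$ hold for $n\in N_L$. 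I would then run the proof of Proposition~\ref{pro:xxx} with $W$, $N$, $\lambda$, $(\sigma,\sigma_1)$ replaced by $W_L$, $N_L$, $\{L\}$, $(\sigma_L,\tau)$: the set $S_{\{L\}}=\{L\}$ is $N_L$-stable; $N_L$ centralizes $e_{\{L\}}^{\sigma_L}$, since $n\inverse e_L^{\sigma_L}n = e_{L^n}^{\sigma_L}=e_L^{\sigma_L}$ by Lemma~\ref{lem:restrict}(c) and $L^n=L$; $N_L$ centralizes $e_{\{L\}}^{\tau}$ by the $W_L$-analogue of Lemma~\ref{lem:n}, because $\tau$ is constant on $N_L$-orbits; and $\Sigma(W_L)$ and $\rad\Sigma(W_L)$ are stable under conjugation by $N_L$. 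This produces a unit $v$ in $\Sigma(W_L)^{N_L}\cap(1+\rad\Sigma(W_L))$ with $v\,e_{\{L\}}^{\sigma_L}=e_{\{L\}}^{\tau}\,v$, that is, $v\,e_L^{\sigma_L}=\bigl(\tau(L)/\sigma_L(L)\bigr)\,e_L^{\tau}\,v$.

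This relation shows that $\ell_v$ maps $e_L^{\sigma_L}\BBC W_L$ into $e_L^{\tau}\BBC W_L$ and that $\ell_{v\inverse}$ maps it back, so $\ell_v$ restricts to a bijection between the two; it commutes with right multiplication by $W_L$ automatically and with the $N_L$-conjugation part of the $N_W(W_L)$-action of Lemma~\ref{lem:Nequiv} because $v$ is $N_L$-fixed, so it is an $N_W(W_L)$-isomorphism. (Here one uses that $e_L^{\sigma_L}\BBC W_L$ and $e_L^{\tau}\BBC W_L$ are $N_W(W_L)$-submodules of $\BBC W_L$, which follows from $n\inverse e_L^{\sigma_L}n = e_L^{\sigma_L}$ and $n\inverse e_L^{\tau}n = e_L^{\tau}$.) For the two vertical maps, Lemma~\ref{lem:Nequiv} says that $\ell_{x_L}$ is an $N_W(W_L)$-equivariant embedding of $\BBC W_L$ into $\BBC W$; restricting it to the two submodules and using $x_L e_L^{\sigma_L}=e_L^{\sigma}$ from Lemma~\ref{lem:restrict}(b) identifies the images as $e_L^{\sigma}\BBC W_L$ and $x_L e_L^{\tau}\BBC W_L$, so both vertical maps are $N_W(W_L)$-isomorphisms onto their images.

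Finally I would set $\mu_v=\ell_{x_L}\,\ell_v\,\ell_{x_L}\inverse$, where $\ell_{x_L}\inverse$ is the inverse of the left vertical isomorphism; this is a composite of three $N_W(W_L)$-isomorphisms, hence an $N_W(W_L)$-isomorphism $e_L^{\sigma}\BBC W_L\to x_L e_L^{\tau}\BBC W_L$, and the identity $\mu_v\circ\ell_{x_L}=\ell_{x_L}\,\ell_v$ is exactly the asserted commutativity. The only genuinely delicate point is in the first step: one must justify that $N_L$, though it lies in $W$ rather than in $N(W_L)$, acts on $\Sigma(W_L)$, on $\rad\Sigma(W_L)$, and on the BBHT quasi-idempotents of $W_L$ exactly as a group of Coxeter-diagram symmetries of $W_L$ does, so that Lemma~\ref{lem:restrict}(c), the $W_L$-version of Lemma~\ref{lem:n}, and the averaging argument of Proposition~\ref{pro:xxx} all carry over to the pair $(W_L,L)$. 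Everything else is routine bookkeeping.
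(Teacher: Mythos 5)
Your proposal is correct and follows essentially the same route as the paper's proof: apply Proposition~\ref{pro:xxx} to the pair $(W_L, N_L)$ with the shape $\{L\}$ and the pair of weight functions $(\sigma_L,\tau)$ to obtain the unit $v$, use Lemma~\ref{lem:Nequiv} together with Lemma~\ref{lem:restrict}(b) to see that the vertical maps $\ell_{x_L}$ are $N_W(W_L)$-isomorphisms onto $e_L^\sigma\BBC W_L$ and $x_Le_L^\tau\BBC W_L$, and then read commutativity off the definition of $\mu_v$. The ``delicate point'' you flag is in fact already handled by the paper's Lemma~\ref{lem:restrict}(c) (which you correctly invoke), since $N_L$ stabilizes $\Delta_L$ and hence lies in $N(W_L)$ in the sense used there, so the $W_L$-analogues of Lemma~\ref{lem:n} and the averaging argument of Proposition~\ref{pro:xxx} apply without further ado.
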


\begin{proof}
  It follows from Lemma \ref{lem:restrict}(c) and Lemma \ref{lem:n} that
  $e_L^{\sigma_L} \BBC W_L$ and $e_L^\tau \BBC W_L$ are $N_W(W_L)$-stable
  right ideals of $\BBC W_L$, where $N_W(W_L)$ acts on $\BBC W_L$ as in
  Lemma \ref{lem:Nequiv}. By Lemma \ref{lem:restrict}(b) we have
  $e_L^{\sigma} =x_Le_L ^{\sigma_L}$ and so it follows from Lemma
  \ref{lem:shift}(a) that $e_L ^\sigma \BBC W_L$ and $x_L e_L^\tau \BBC W_L$
  are stable under right multiplication by $N_W(W_L)$. It now follows from
  Lemma \ref{lem:Nequiv} that the vertical maps are isomorphisms of
  $N_W(W_L)$-modules.

  The hypotheses of Proposition \ref{pro:xxx} are satisfied with $L$,
  $\{L\}$, and $\sigma_L$ in place of $S$, $S_\lambda$, and $\sigma$. Thus,
  there is a unit $v$ in $\Sigma(W_L)^{N_L}$ such that $\ell_v\colon
  e_L^{\sigma_L} \BBC W_L \to e_L^\tau \BBC W_L$ is an isomorphism. The
  conclusion of the proposition is now clear, as $\mu_v= \ell_{x_L} \ell_v
  \ell_{x_L} \inverse$.
\end{proof}

The decomposition $1= \sum_{\lambda\in \Lambda}e_\lambda^\sigma$ gives a
decomposition
\[
\BBC W \cong \bigoplus_{\lambda\in \Lambda} E_\lambda^\sigma
\]
of $\BBC W$ into right ideals. It is shown in
\cite[\S7]{bergeronbergeronhowletttaylor:decomposition} that $\dim
E_\lambda^\sigma = |\sh\inverse(\lambda)|$, the number of elements in $W$
with shape $\lambda$. In the next lemma we compute $|\sh\inverse(\lambda)|$
in terms of cuspidal elements in a parabolic subgroup of $W$ with shape
$\lambda$.

\begin{lemma}\label{lem:cuspidal}
  Suppose that $\lambda$ is a shape in $\Lambda$, $X$ is a subspace in
  $\lambda$, and $C$ is a conjugacy class in $W$ with shape $\lambda$.  Then
  \begin{enumerate}
  \item[\sl (a)] $|C|= |W: N_W(W_X)| \, |C\cap W_X|$ and
  \item[\sl (b)] $|\sh\inverse(\lambda)|= |W: N_W(W_X)| \,
    |\sh\inverse(\lambda) \cap W_X|$.
  \end{enumerate}
\end{lemma}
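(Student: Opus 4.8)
The plan is to prove both statements by the orbit-counting argument that underlies the bijection between $W$-orbits on $L(\CA)$ and conjugacy classes of parabolic subgroups. First I would recall the basic facts assembled in \S\ref{subsect:conj}: the setwise stabilizer of $X$ in $W$ is exactly $N_W(W_X)$ (Orlik--Solomon), so the $W$-orbit of $X$ has size $|W:N_W(W_X)|$; and for $w$ in $C$, the fixed-space $\Fix(w)$ lies in the $W$-orbit $\lambda$ of $X$. These are the only ingredients needed.

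For part (a), I would partition the conjugacy class $C$ according to the subspace $\Fix(w)$ for $w \in C$. Since $\sh$ is constant on $C$, each $\Fix(w)$ lies in $\lambda$, i.e.\ in the $W$-orbit of $X$; conversely $W$ acts transitively on $\lambda$, and $C$ is a single $W$-orbit, so $W$ acts transitively on the set of pairs $(w, \Fix(w))$ with $w \in C$. Hence the ``fibres'' $\{w \in C \mid \Fix(w) = Y\}$, as $Y$ ranges over $\lambda$, all have the same cardinality, and for $Y = X$ this fibre is precisely $C \cap W_X$ (an element $w \in C$ fixes $X$ pointwise iff $w \in W_X$, and then automatically $\Fix(w) = X = \Fix(W_X) \cap \dots$, but more carefully: $w \in C$ with $\Fix(w) = X$ means $w \in W_X$ and $\Fix(w)=X$, whereas $w \in C \cap W_X$ means $w \in W_X$; since $C$ has shape $\lambda$, every $w \in C \cap W_X$ is cuspidal in $W_X$, so $\Fix(w) = \Fix(W_X) = X$, and the two descriptions agree). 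Since the orbit $\lambda$ has $|W:N_W(W_X)|$ elements, summing the equal fibre-sizes gives $|C| = |W:N_W(W_X)| \cdot |C \cap W_X|$.

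For part (b), I would apply the same partition to the set $\sh\inverse(\lambda) = \{w \in W \mid \Fix(w) \in \lambda\}$ in place of a single conjugacy class. The group $W$ acts on $\sh\inverse(\lambda)$ by conjugation, and the map $w \mapsto \Fix(w)$ is $W$-equivariant onto $\lambda$, which is a single $W$-orbit. By the orbit--stabiliser theorem applied to this equivariant map, all fibres $\{w \in \sh\inverse(\lambda) \mid \Fix(w) = Y\}$ have the same size; the fibre over $X$ is $\{w \in W \mid \Fix(w) = X\}$. Now observe this fibre equals $\sh\inverse(\lambda) \cap W_X$: if $w \in W$ has $\Fix(w) = X$ then $w \in W_X$ and $\sh(w) = \lambda$; conversely if $w \in W_X$ with $\sh(w) = \lambda$ then, as noted above, $w$ is cuspidal in $W_X$ so $\Fix(w) = \Fix(W_X) = X$. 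Multiplying the common fibre size by $|\lambda| = |W:N_W(W_X)|$ yields $|\sh\inverse(\lambda)| = |W:N_W(W_X)| \cdot |\sh\inverse(\lambda) \cap W_X|$.

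I do not expect a serious obstacle here; the statement is essentially bookkeeping on top of the orbit decomposition already set up in \S\ref{subsect:conj}. The only point requiring a little care is the identification of the fibre over $X$ with $C \cap W_X$ (resp.\ $\sh\inverse(\lambda) \cap W_X$): one must use that membership in $C$ (resp.\ shape $\lambda$) forces a $W_X$-element to be cuspidal in $W_X$, hence to have fixed space exactly $X$ rather than some larger subspace still lying in $\lambda$ — but since $X$ is the \emph{unique} element of $\lambda$ contained in $W_X$'s fixed locus, this is immediate. Everything else is the orbit--stabiliser theorem for the $W$-equivariant surjection $w \mapsto \Fix(w)$ onto the transitive $W$-set $\lambda$.
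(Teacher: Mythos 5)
Your proof is correct, and it takes a genuinely different route from the paper. The paper's proof of (a) leans on two nontrivial inputs: the Geck--Pfeiffer fact that $C\cap W_X$ is a \emph{single} cuspidal $W_X$-conjugacy class, and the Konvalinka--Pfeiffer--R\"over structure result $Z_W(c)\subseteq N_W(W_X)$ with $|N_W(W_X):Z_W(c)|=|W_X:Z_{W_X}(c)|$; it then chains indices through $|C|=|W:Z_W(c)|$. Your argument bypasses both of these, using only that $N_W(W_X)$ is the setwise stabilizer of $X$ and that the $W$-equivariant map $w\mapsto \Fix(w)$ onto the transitive $W$-set $\lambda$ has fibres of constant size, together with the observation that a $W_X$-element whose fixed space lies in $\lambda$ must have fixed space \emph{equal} to $X$ (containment plus equal dimension). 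Your identification of the fibre over $X$ is slightly over-explained --- the decisive point is just $X\subseteq\Fix(w)$ and $\dim\Fix(w)=\dim X$, no need to appeal to cuspidality as an intermediate notion --- but the reasoning is sound. A further difference: the paper proves (b) by reducing to (a) via the decomposition of $\sh^{-1}(\lambda)$ into conjugacy classes, while you prove (a) and (b) uniformly by the same fibration argument, so (a) is not used in your proof of (b). Your approach is more elementary and self-contained; the paper's approach, while requiring more machinery, goes through the centralizer structure that is in any case needed for the rest of the paper.
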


\begin{proof}
  Notice that $C\cap W_X$ is a cuspidal conjugacy class in $W_X$. Thus, it
  follows from (1) and (2) in \S\ref{central} that $|N_W(W_X): Z_W(c)|=
  |W_X: Z_{W_X}(c)|$ for $c$ in $C\cap W_X$. Therefore
  \[
  |C|= |W:N_W(W_X)| \,|N_W(W_X): Z_{W}(c)| = |W: N_W(W_X)| \, |C\cap W_X|.
  \]
  This proves (a). Statement (b) follows from (a) and the observation that
  $\sh\inverse(\lambda)$ is the union of those conjugacy classes in $W$
  whose intersection with $W_X$ is a cuspidal conjugacy class in $W_X$.
\end{proof}

\begin{corollary}\label{cor:dimA=dimE}
  Suppose $\lambda$ is in $\Lambda$, $J$ is in $S_\lambda$, $\sigma\colon
  2^S\to \BBR_{>0}$, and $\tau\colon 2^J\to \BBR_{>0}$.  Then
  \begin{enumerate}
  \item[\sl (a)] $\dim A_{X_J}= \dim e_J^\sigma \BBC W_J = \dim e_J^\tau
    \BBC W_J =|\sh\inverse(\lambda) \cap W_J|$ and
  \item[\sl (b)] $\dim A_\lambda= \dim E_\lambda ^\sigma= |\sh \inverse(
    \lambda)|$.
  \end{enumerate}
\end{corollary}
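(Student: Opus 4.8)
The plan is to reduce both parts to the single dimension count $\dim A_{X_J} = |\sh\inverse(\lambda)\cap W_J|$, together with the Bergeron--Bergeron--Howlett--Taylor formula $\dim E^\sigma_\lambda = |\sh\inverse(\lambda)|$ quoted earlier in this section and Lemma~\ref{lem:cuspidal}.

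For (a) I would first handle the two descent-algebra right ideals. Since $\codim X_I = |I|$, the only subset $I$ of $J$ with $X_I = X_J$ is $J$ itself, so in the descent algebra of the Coxeter system $(W_J,J)$ the idempotent attached to the shape $\{J\}$ is just $\tau(J)\,e^\tau_J$, and hence $e^\tau_J\,\BBC W_J = e^\tau_{\{J\}}\,\BBC W_J = E^\tau_{\{J\}}$. Applying the BBHT dimension formula to $W_J$ gives $\dim e^\tau_J\,\BBC W_J = |\sh_{W_J}\inverse(\{J\})|$; and since $X_J = \Fix(W_J)\subseteq\Fix(w)$ for every $w\in W_J$, the condition $\sh_{W_J}(w)=\{J\}$ is equivalent to $\dim\Fix(w)=\dim X_J$, i.e.\ to $\Fix(w)=X_J$, so by the discussion in \S\ref{subsect:conj} this set of elements is exactly $\sh\inverse(\lambda)\cap W_J$. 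For $e^\sigma_J\,\BBC W_J$, Lemma~\ref{lem:restrict}(b) gives $e^\sigma_J = x_J\,e^{\sigma_J}_J$ with $\sigma_J\colon 2^J\to\BBR_{>0}$, $\sigma_J(I)=m^\sigma_{IJ}$; as left multiplication by $x_J$ embeds $\BBC W_J$ into $\BBC W$ (Lemma~\ref{lem:Nequiv}) and is therefore injective, $\dim e^\sigma_J\,\BBC W_J = \dim e^{\sigma_J}_J\,\BBC W_J = |\sh\inverse(\lambda)\cap W_J|$ by the case just treated, with $\tau=\sigma_J$.

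It remains to identify $\dim A_{X_J}$. The arrangement of $W_J$ is the localization of $\CA$ at $X_J$, and $X_J$ is its unique element of codimension $|J|$, so $A_{X_J}$ is the top-degree component of the Orlik--Solomon algebra of $W_J$; its dimension is the product of the exponents of $W_J$, which by Solomon's formula $\sum_{w\in W_J} t^{\codim\Fix(w)}=\prod_i(1+m_i t)$ for $W_J$ also equals $|\{w\in W_J : \Fix(w)=X_J\}| = |\sh\inverse(\lambda)\cap W_J|$. (Alternatively one may avoid Solomon's formula and argue by induction on $|W|$, using Brieskorn's Lemma together with the parametrization of conjugacy classes in \S\ref{subsect:conj}.) This proves (a). For (b), $\dim E^\sigma_\lambda = |\sh\inverse(\lambda)|$ is the cited BBHT formula; and since $A_\lambda = \bigoplus_{X\in\lambda}A_X$ with $W$ permuting the summands transitively and $N_W(W_{X_J})$ the setwise stabilizer of $X_J$ (\S\ref{sec:os}), part (a) and Lemma~\ref{lem:cuspidal}(b) give $\dim A_\lambda = |W:N_W(W_{X_J})|\cdot|\sh\inverse(\lambda)\cap W_{X_J}| = |\sh\inverse(\lambda)|$. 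The only ingredient here not internal to the paper is the value of $\dim A_{X_J}$; everything else is bookkeeping with the BBHT quasi-idempotents, the embedding $x_J$, and Lemma~\ref{lem:cuspidal}, and I expect that single identification to be the main point.
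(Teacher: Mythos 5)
Your proof is correct and follows essentially the same outline as the paper's: reduce everything to the single count $\dim A_{X_J} = |\sh\inverse(\lambda)\cap W_J|$, apply the BBHT dimension formula $\dim E_\lambda^\sigma = |\sh\inverse(\lambda)|$ (both for $W$ and, via the observation that $S_{\{J\}}=\{J\}$ in $(W_J,J)$, for $W_J$), use Lemma~\ref{lem:restrict}(b) and the injectivity of left multiplication by $x_J$ to handle $e_J^\sigma\,\BBC W_J$, and finally invoke Lemma~\ref{lem:cuspidal}(b) together with $A_\lambda=\bigoplus_{X\in\lambda}A_X$ for part~(b). The one place you diverge is the identification $\dim A_{X_J}=|\{w\in W_J:\Fix(w)=X_J\}|$: the paper simply cites \cite[Proposition~2.4]{douglass:cohomology}, while you derive it from Brieskorn's lemma (to recognize $A_{X_J}$ as the top-degree piece of the Orlik--Solomon algebra of the localized arrangement $\CA_{X_J}$, which by Steinberg's theorem is the reflection arrangement of $W_J$) together with the coincidence of the Poincar\'e polynomial $\prod_i(1+m_it)$ of the Orlik--Solomon algebra with Shephard--Todd/Solomon's $\sum_{w\in W_J}t^{\codim\Fix(w)}$. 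That substitution is sound and makes the key identity more self-contained, at the cost of importing two classical facts about reflection arrangements that the paper does not otherwise need; the rest of your argument matches the paper's step for step.
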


\begin{proof}
  It is clear that $\{\, w\in W\mid \Fix(w)=X_J\,\} = \sh\inverse( \lambda)
  \cap W_J$ is the set of all cuspidal elements in $W_J$, and it is shown in
  \cite[Proposition 2.4]{douglass:cohomology} that $\dim A_{X_J}=| \{\, w\in
  W\mid \Fix(w)=X_J\,\}|$. Therefore, $\dim A_{X_J}= |\sh\inverse(\lambda)
  \cap W_J|$, and so it follows from Lemma \ref{lem:cuspidal}(b) that $\dim
  A_\lambda= |\sh\inverse (\lambda)|$.  It is shown in \cite[Theorem
  7.15]{bergeronbergeronhowletttaylor:decomposition} that $\dim
  E_\lambda^\sigma = |\sh\inverse (\lambda)|$. This proves (b).

  It remains to show that $\dim e_J^\sigma \BBC W_J = \dim e_J^\tau \BBC W_J
  = |\sh\inverse(\lambda) \cap W_J|$. Using Lemma \ref{lem:restrict}(b),
  Lemma \ref{lem:Nequiv}, Proposition \ref{pro:elambda}, and (b) applied to
  the shape $\{J\}$ of $W_J$, we have
  \[
  \dim e_J^\sigma \BBC W_J= \dim x_Je_J^{\sigma_J} \BBC W_J =\dim
  e_J^{\sigma_J} \BBC W_J =\dim e_J^{\tau} \BBC W_J = |\sh\inverse(\lambda)
  \cap W_J|,
  \]
  as desired.
\end{proof}

The next proposition and its corollary are the main results in this section.

\begin{proposition}\label{pro:induct}
  Suppose that $\sigma\colon 2^S\to \BBR_{>0}$, $\lambda$ is in $\Lambda$,
  $X$ is in $\lambda$, and $L$ is in $S_\lambda$.
  \begin{itemize}
  \item[\sl (a)] $N_W(W_L)$ acts on $e_L^\sigma \BBC W_L$ by right
    multiplication and
    \[
    E_\lambda^\sigma \cong \Ind_{N_W(W_L)}^W\left( e_L^\sigma \BBC W_L
    \right).
    \]
  \item[\sl (b)] $N_W(W_X)$ acts on $A_X$ and 
    \[
    A_\lambda \cong \Ind_{N_W(W_X)}^W (A_X).
    \]
  \end{itemize}
\end{proposition}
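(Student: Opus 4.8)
The plan is to dispose of (b) quickly, since it only records what was already explained in \S\ref{sec:os}, and to concentrate on (a). For (b) I would recall from \S\ref{sec:os} that $A=\bigoplus_{X'\in L(\CA)}A_{X'}$ with $wA_{X'}=A_{w(X')}$ for $w\in W$, and that $A_\lambda=\bigoplus_{X'\in\lambda}A_{X'}$. Since $\lambda$ is a single $W$-orbit, $W$ permutes these summands transitively, the stabilizer of $X$ is $N_W(W_X)$ (the setwise stabilizer of $X$, by Orlik and Solomon), and $N_W(W_X)$ stabilizes $A_X$; by the standard recognition criterion for induced modules this is precisely the statement $A_\lambda\cong\Ind_{N_W(W_X)}^W(A_X)$ quoted in \S\ref{sec:os}.

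For (a), since $L\in S_\lambda$, Lemma~\ref{lem:lambda} gives $E_\lambda^\sigma=e_L^\sigma\BBC W$, and Lemma~\ref{lem:restrict}(b) gives $e_L^\sigma=x_Le_L^{\sigma_L}$ with $e_L^{\sigma_L}\in\BBC W_L$, so that $e_L^\sigma\BBC W_L=x_L(e_L^{\sigma_L}\BBC W_L)$. The first task is the $N_W(W_L)$-action. By Lemma~\ref{lem:Nequiv}, left multiplication by $x_L$ is an $N_W(W_L)$-equivariant embedding of $\BBC W_L$, where $N_W(W_L)=W_LN_L$ acts by $a\cdot wn=n\inverse awn$, into $\BBC W$ with its right-multiplication action. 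For $n\in N_L$ one has $L^n=L$ (as $n(\Delta_L)=\Delta_L$), hence $n\inverse e_L^{\sigma_L}n=e_L^{\sigma_L}$ by Lemma~\ref{lem:restrict}(c), and therefore $(e_L^{\sigma_L}b)\cdot wn=(n\inverse e_L^{\sigma_L}n)(n\inverse bwn)=e_L^{\sigma_L}(n\inverse bwn)\in e_L^{\sigma_L}\BBC W_L$ for all $b\in\BBC W_L$ and $w\in W_L$; thus $e_L^{\sigma_L}\BBC W_L$ is stable under the twisted action, and transporting across the embedding shows $e_L^\sigma\BBC W_L$ is stable under right multiplication by $N_W(W_L)$, which is the first assertion of (a).

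For the isomorphism I would use the evident map $\Phi\colon(e_L^\sigma\BBC W_L)\otimes_{\BBC N_W(W_L)}\BBC W\to e_L^\sigma\BBC W$, $m\otimes g\mapsto mg$. It is well defined because the tensor product is balanced over $\BBC N_W(W_L)$, it is a homomorphism of right $\BBC W$-modules, and its image is $e_L^\sigma\BBC W_L\cdot\BBC W=e_L^\sigma\BBC W=E_\lambda^\sigma$ (Lemma~\ref{lem:lambda}), so it is surjective. I would then compare dimensions: $\dim\Ind_{N_W(W_L)}^W(e_L^\sigma\BBC W_L)=|W:N_W(W_L)|\cdot\dim(e_L^\sigma\BBC W_L)$, which equals $|W:N_W(W_L)|\cdot|\sh\inverse(\lambda)\cap W_L|$ by Corollary~\ref{cor:dimA=dimE}(a) applied with $J=L$, equals $|\sh\inverse(\lambda)|$ by Lemma~\ref{lem:cuspidal}(b) with $X=X_L$ (so that $W_X=W_L$), and equals $\dim E_\lambda^\sigma$ by Corollary~\ref{cor:dimA=dimE}(b). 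A surjection between finite-dimensional spaces of equal dimension is an isomorphism, which finishes (a).

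I do not anticipate a real obstacle, since the substance has been assembled in \S\ref{prelim}: the translation identities (Lemmas~\ref{lem:shift}, \ref{lem:n}, \ref{lem:restrict}), the $x_L$-equivariant embedding (Lemma~\ref{lem:Nequiv}), and the dimension formula $\dim E_\lambda^\sigma=|\sh\inverse(\lambda)|$. The points that need care are the bookkeeping with the twisted $N_W(W_L)$-action --- in particular the identities $x_Ln=x_L$ and $n\inverse e_L^{\sigma_L}n=e_L^{\sigma_L}$ for $n\in N_L$, which let $x_L$ absorb $N_L$ --- and making sure each of the three dimension inputs is applied to the correct shape and parabolic subgroup.
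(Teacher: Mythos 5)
Your proposal is correct and follows essentially the same route as the paper: for (a) you use Lemma~\ref{lem:lambda}, show $N_W(W_L)$-stability of $e_L^\sigma\BBC W_L$ via Lemmas~\ref{lem:restrict} and~\ref{lem:Nequiv} (the paper simply cites Proposition~\ref{pro:commutes}, whose proof contains exactly your argument), and conclude by comparing dimensions with Corollary~\ref{cor:dimA=dimE} and Lemma~\ref{lem:cuspidal}. For (b) the paper just cites Lehrer--Solomon, while you sketch the standard induced-module recognition argument; both are fine.
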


\begin{proof}
  Statement (b) is proved in \cite[\S2]{lehrersolomon:symmetric}. We prove
  (a).

  It was shown in Proposition \ref{pro:commutes} that $e_L^\sigma \BBC W_L$
  is stable under right multiplication by $N_W(W_L)$. By Lemma
  \ref{lem:lambda}, $E_\lambda^\sigma = e_L^\sigma\BBC W$. Therefore, to
  prove that $E_\lambda ^\sigma \cong \Ind_{N_W(W_L)}^W\left( e_L ^\sigma
    \BBC W_L \right)$ it is enough to show that the multiplication map
  $e_L^\sigma \BBC W_L \otimes_{\BBC N_W(W_L)} \BBC W \to E_\lambda ^\sigma$
  is a bijection. This map is obviously a surjection. Using Lemma
  \ref{lem:cuspidal} and Corollary \ref{cor:dimA=dimE}, we have
  \begin{align*}
    \dim E_\lambda^\sigma &= |\sh\inverse(\lambda)| \\
    &= |W: N_W(W_L)| \, |\sh\inverse(\lambda) \cap W_L|\\
    &= |W: N_W(W_L)| \, \dim e_L^\sigma \BBC W_L \\
    &= \dim e_L^\sigma \BBC W_L \otimes_{\BBC N_W(W_L)} \BBC W
  \end{align*}
  and so the multiplication map is indeed a bijection.
\end{proof}

We saw in Proposition \ref{pro:commutes} that $e_L ^\sigma \BBC W_L$,
$e_L^{\sigma_L} \BBC W_L$, $e_L^\tau \BBC W_L$, and $x_L e_L^\tau \BBC W_L$
all afford equivalent representations of $N_W(W_L)$ when $\tau\colon 2^L\to
\BBR_{>0}$ is chosen only subject to the restriction that it is constant on
$N_L$-orbits. Thus, Proposition \ref{pro:induct}(a) implies the following
corollary.

\begin{corollary}\label{cor:conc}
  Suppose $\lambda$ is in $\Lambda$, $L$ is in $S_\lambda$, $\sigma\colon
  2^S\to \BBR_{>0}$, and $\tau\colon 2^L\to \BBR_{>0}$ is constant on
  $N_L$-orbits. Then
  \[
  E_\lambda^\sigma \cong \Ind_{N_W (W_L)}^W \left( e_L^{\sigma_L} \BBC W_L
  \right) \cong \Ind_{N_W (W_L)}^W \left( e_L^\tau \BBC W_L \right) \cong
  \Ind_{N_W (W_L)}^W \left( x_Le_L^\tau \BBC W_L \right).
  \]
  In particular, if $\sigma(I)=1$ and $\tau(J)=1$ for all $I\subseteq S$ and
  $J\subseteq L$, then
  \[
  E_\lambda \cong \Ind_{N_W (W_L)}^W \left( e_L^L \BBC W_L \right).
  \]
\end{corollary}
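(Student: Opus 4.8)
The plan is to read off the chain of isomorphisms as a direct consequence of the equivalences established in Proposition \ref{pro:commutes} together with the induction statement in Proposition \ref{pro:induct}(a). First I would invoke Proposition \ref{pro:induct}(a) twice: once with the function $\sigma$ on $2^S$, yielding
\[
E_\lambda^\sigma \cong \Ind_{N_W(W_L)}^W\left(e_L^\sigma \BBC W_L\right),
\]
and a second time, but now applied to the ambient system $(W,S)$ with the function $\sigma$ replaced by any $\sigma'\colon 2^S\to\BBR_{>0}$ whose restriction to $2^L$ is $\tau$ (such a $\sigma'$ exists since $L\subseteq S$ and $\tau$ takes positive values — just extend by $1$ on subsets not contained in $L$). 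For that $\sigma'$ one has $(\sigma')_L = \tau$ only if $m_{IL}^{\sigma'}$ happens to equal $\tau(I)$; rather than forcing this, the cleaner route is to apply Proposition \ref{pro:commutes} directly, which already produces the four right $N_W(W_L)$-modules $e_L^{\sigma_L}\BBC W_L$, $e_L^\tau\BBC W_L$, $e_L^\sigma\BBC W_L$, and $x_Le_L^\tau\BBC W_L$ together with explicit $N_W(W_L)$-isomorphisms between them (the vertical maps $\ell_{x_L}$, the horizontal map $\ell_v$, and $\mu_v$). So the heart of the argument is: these four are isomorphic as right $N_W(W_L)$-modules, hence inducing up to $W$ gives four isomorphic right $\BBC W$-modules, and by Proposition \ref{pro:induct}(a) the first of these inductions is $E_\lambda^\sigma$.

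Concretely, the steps are: (1) By Proposition \ref{pro:commutes}, choose the unit $v\in\Sigma(W_L)^{N_L}$ so that $e_L^{\sigma_L}\BBC W_L$, $e_L^\tau\BBC W_L$, $e_L^\sigma\BBC W_L = x_Le_L^{\sigma_L}\BBC W_L$, and $x_Le_L^\tau\BBC W_L$ are pairwise isomorphic as right $N_W(W_L)$-modules. (2) Apply the exact functor $\Ind_{N_W(W_L)}^W(-) = -\otimes_{\BBC N_W(W_L)}\BBC W$ to each of these isomorphisms; since an isomorphism of $\BBC N_W(W_L)$-modules induces an isomorphism of the induced $\BBC W$-modules, we get
\[
\Ind_{N_W(W_L)}^W\left(e_L^{\sigma_L}\BBC W_L\right) \cong
\Ind_{N_W(W_L)}^W\left(e_L^\tau\BBC W_L\right) \cong
\Ind_{N_W(W_L)}^W\left(x_Le_L^\tau\BBC W_L\right).
\]
(3) By Proposition \ref{pro:induct}(a), $E_\lambda^\sigma\cong\Ind_{N_W(W_L)}^W\left(e_L^\sigma\BBC W_L\right)$, and $e_L^\sigma\BBC W_L$ is one of the four modules in step (1) (via Lemma \ref{lem:restrict}(b), $e_L^\sigma = x_Le_L^{\sigma_L}$), so it is isomorphic to $e_L^{\sigma_L}\BBC W_L$; therefore $E_\lambda^\sigma$ is isomorphic to all of the induced modules displayed. (4) For the final assertion, specialize to $\sigma\equiv 1$ and $\tau\equiv 1$: then $\sigma$ is trivially constant on every $N$-orbit, $\tau$ is constant on $N_L$-orbits, $e_L^\tau = e_L^L$ in the notation of \S\ref{prelim} (recall $e_J^L$ denotes the BBHT quasi-idempotent in $\BBC W_L$ for $\tau\equiv 1$), and the chain collapses to $E_\lambda\cong\Ind_{N_W(W_L)}^W\left(e_L^L\BBC W_L\right)$.

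I do not expect a serious obstacle here: the corollary is essentially a bookkeeping assembly of Propositions \ref{pro:commutes} and \ref{pro:induct}. The only point that needs a little care — and the thing I would state explicitly rather than gloss over — is that $\Ind_{N_W(W_L)}^W$ preserves isomorphisms of $N_W(W_L)$-modules (immediate, as $-\otimes_{\BBC N_W(W_L)}\BBC W$ is a functor and $\BBC W$ is free, hence flat, as a left $\BBC N_W(W_L)$-module), together with the identification $e_L^\sigma = x_Le_L^{\sigma_L}$ from Lemma \ref{lem:restrict}(b) so that the $\BBC W$-module $E_\lambda^\sigma = e_L^\sigma\BBC W$ matches the induction of the $N_W(W_L)$-module $e_L^\sigma\BBC W_L = x_Le_L^{\sigma_L}\BBC W_L$ appearing in Proposition \ref{pro:commutes}. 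Everything else is direct substitution.
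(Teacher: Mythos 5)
Your proof is correct and follows essentially the same route as the paper: the paper likewise invokes Proposition~\ref{pro:commutes} to see that $e_L^\sigma\BBC W_L$, $e_L^{\sigma_L}\BBC W_L$, $e_L^\tau\BBC W_L$, and $x_Le_L^\tau\BBC W_L$ are all isomorphic as $N_W(W_L)$-modules, then applies Proposition~\ref{pro:induct}(a) and induces to $W$. The brief detour you consider via a hypothetical extension $\sigma'$ is correctly abandoned in favor of the direct argument, and your final bookkeeping (including the specialization to $\sigma\equiv1$, $\tau\equiv1$) matches the paper.
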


%%%%%%%%%%%%%%%%%%%%%%%%%%%%%%%%%%%%%%%%%%%%%%%%%%%%%%%%%%%%%%%%%%%%%%
%%%%%%%%%%%%% \5 Symmetric groups: $\lambda=(n)$
%%%%%%%%%%%%%%%%%%%%%%%%%%%%%%%%%%%%%%%%%%%%%%%%%%%%%%%%%%%%%%%%%%%%%%
\section{Symmetric groups: $\lambda=(n)$}\label{lambda=n}

In this section and the next we prove Conjecture \ref{conj} for symmetric
groups. In these two sections we take $W$ to be the symmetric group on $n$
letters with $n\geq 2$ and we identify $W$ with the subgroup of
$\GL_n(\BBC)$ that acts on the basis $\{v_1, v_2, \dots, v_n\}$ as
permutations. Here, $v_i$ is the column vector whose $j\th$ entry is $0$ for
$j\ne i$ and $1$ for $j=i$. For $1\leq i\leq n-1$ let $s_i$ denote the
matrix in $W$ that interchanges $v_i$ and $v_{i+1}$ and fixes $v_j$ for
$j\ne i, i+1$.  Then $S=\{s_1, s_2, \dots, s_{n-1}\}$ is a Coxeter
generating set for $W$.

By a \emph{partition of $n$} we mean a non-increasing finite sequence of
positive integers whose sum is $n$. Say $\lambda = (\lambda_1, \dots,
\lambda_p)$ is a partition of $n$. Then $\lambda_1\geq \dots \geq \lambda_p
>0$ and $\sum_{k=1}^p \lambda_k=n$. The integers $\lambda_i$ are called the
\emph{parts of $\lambda$.}

It is well-known that for $W=S_n$ we may identify $\Lambda$ with the set of
partitions of $n$. We make this identification precise as follows.  Suppose
that $\lambda$ is a partition of $n$ with $p$ parts. Define partial sums
$\tau_i$ for $i=0, 1, \dots, p$ by $\tau_0=0$ and $\tau_i= \lambda_1+ \dots
+\lambda_i$ for $1\leq i\leq p$. Define
\[
I_\lambda= S\setminus \{s_{\tau_1}, s_{\tau_2}, \dots, s_{\tau_{p-1}} \}
\qquad \text{and}\qquad W_\lambda=\langle I_\lambda \rangle.
\]
Then $W_\lambda$ is isomorphic to the product of symmetric groups
$S_{\lambda_1}\times \dots \times S_{\lambda_p}$, where the factor
$S_{\lambda_i}$ acts on the subset $\{v_{\tau_{i-1}+1}, v_{\tau_{i-1}+2},
\dots, v_{\tau_{i}} \}$ of $\{v_1, v_2, \dots, v_n\}$.  Next, define
\[
X_\lambda= \Fix(W_\lambda).
\]
Then $X_\lambda$ is in $L(\CA)$ and $W_{X_\lambda}= W_\lambda$.  We have
seen in Proposition \ref{pro:induct} that
\[
E_\lambda\cong \Ind_{N_W(W_\lambda)} ^W (e_{I_\lambda} \BBC W_\lambda)
\quad\text{and} \quad A_\lambda \cong \Ind_{N_W(W_\lambda)} ^W
(A_{X_\lambda}).
\]
It is well-known and straightforward to check that $\{\, X_\lambda \mid
\text{$\lambda$ is a partition of $n$} \,\}$ is a complete set of orbit
representatives for the action of $W$ on $L(\CA)$ and that $\{\, I_\lambda
\mid \text{$\lambda$ is a partition of $n$} \,\}$ is a complete set of
representatives for the $\sim$-equivalence classes of subsets of $S$.

Notice that in the extreme case when all parts of $\lambda$ are equal to $1$
we have $I_\lambda=\emptyset$ and $W_\lambda=W_\emptyset= \{1\}$. At the
other extreme, when $\lambda=(n)$, we have $I_\lambda=S$ and
$W_\lambda=W_S=W$.  We first prove Conjecture \ref{conj} when $\lambda=(n)$.

For the rest of this section we take $\lambda=(n)$. Then
$W_\lambda=N_W(W_\lambda)= W$ and so $E_{(n)} = e_{I_{(n)}} \BBC W_{(n)}$
and $A_{(n)}= A_{X_{(n)}}$. Moreover, $A_{X_{(n)}}= A^{n-1}$ is the top,
non-zero graded piece of $A$.  To simplify the notation, we denote
$A_{(n)}$, $E_{(n)}$, and $e_{I_{(n)}}$ by $A_n$, $E_n$, and $e_n$,
respectively.

Define $c_1=1$ in $W$ and for $1< i\leq n$ define $c_i= s_{i-1} \dotsm
s_2s_1$, so $c_i$ acts on the basis $\{v_1, v_2, \dots, v_n\}$ as an
$i$-cycle. Also, set $c=c_n$. Then,
\begin{itemize}
\item $c$ is a cuspidal element in $W$,
\item the set of cuspidal elements in $W$ is precisely the conjugacy class
  of $c$, and
\item $Z_W(c)= \langle c\rangle$ is the cyclic group of order $n$ generated
  by $c$.
\end{itemize}
Set $\zeta= e^{2\pi i/n}$ in $\BBC$ and define $\varphi\colon Z_W(c)\to
\BBC$ by $\varphi(c\inverse )= \zeta$. The elements we have denoted by $c_i$
are denoted by $c_i\inverse$ by Lehrer and Solomon
\cite{lehrersolomon:symmetric}. However, the character $\varphi$ of $Z_W(c)$
is the same as in \cite{lehrersolomon:symmetric}.

\begin{theorem}\label{top}
  With the preceding notation we have that
  \begin{itemize}
  \item[\sl (a)] the character of $W$ on $E_n$ is $\Ind_{Z_W(c)}^W(\varphi)$
    and
  \item[\sl (b)] the character of $W$ on $A_n$ is $\epsilon \,
    \Ind_{Z_W(c)}^W(\varphi)$.
  \end{itemize}
\end{theorem}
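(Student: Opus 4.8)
The plan is to carry out the (trivial) reductions forced by $\lambda=(n)$, to reduce each statement to producing a single nonzero $W$-homomorphism out of an induced module of the correct dimension, and to obtain that map from a distinguished $\langle c\rangle$-eigenvector. Since $\lambda=(n)$ gives $I_{(n)}=S$, $W_{(n)}=W$ and $N_W(W_{(n)})=W$, Proposition~\ref{pro:induct} collapses to $E_n=e_S\BBC W$ and $A_n=A_{X_{(n)}}=A^{n-1}$, the top nonzero graded piece of the Orlik--Solomon algebra. By Corollary~\ref{cor:dimA=dimE}(b), $\dim E_n=\dim A_n=|\sh\inverse((n))|$, the number of $n$-cycles in $W$, namely $(n-1)!$; and $\dim\Ind_{Z_W(c)}^W(\varphi)=|W|/|Z_W(c)|=(n-1)!$ since $Z_W(c)=\langle c\rangle$ has order $n$. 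Moreover $\Fix(c)=\Fix(W)$, so every element of $Z_W(c)\subseteq W$ acts trivially on $\Fix(c)$ and hence $\alpha_c$ is trivial; by the projection formula $\epsilon\otimes\Ind_{Z_W(c)}^W(\varphi)\cong\Ind_{Z_W(c)}^W(\epsilon_c\varphi)$, so (b) is precisely the $\lambda=(n)$ case of Conjecture~\ref{conj}(b). It therefore suffices to build $W$-module isomorphisms $E_n\cong\Ind_{Z_W(c)}^W(\varphi)$ and $A_n\cong\Ind_{Z_W(c)}^W(\epsilon_c\varphi)$.

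For (a), let $e_\varphi=\frac1n\sum_{j=0}^{n-1}\varphi(c^{-j})\,c^j=\frac1n\sum_{j=0}^{n-1}\zeta^jc^j\in\BBC W$ be the primitive idempotent of $\BBC\langle c\rangle$ affording $\varphi$; since $e_\varphi z=\varphi(z)e_\varphi$ for $z\in Z_W(c)$, we have $e_\varphi\BBC W\cong\varphi\otimes_{\BBC Z_W(c)}\BBC W=\Ind_{Z_W(c)}^W(\varphi)$. So it is enough to prove $e_S\BBC W\cong e_\varphi\BBC W$, i.e.\ that the BBHT idempotent $e_S$ is a \emph{Lie idempotent}. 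One route is to invoke the descent-algebra/free-Lie-algebra dictionary: $e_S\BBC W$ is the image of a Lie idempotent, hence isomorphic to the multilinear part $\mathrm{Lie}_n$ of the free Lie algebra on $n$ letters (see \cite{reutenauer:free}, \cite{garsiareutenauer:decomposition}, \cite{bergeronbergerongarsia:idempotents}), and then Klyachko's theorem \cite{klyachko:lie} identifies $\mathrm{Lie}_n$ with $\Ind_{C_n}^{S_n}(\varphi)$. A self-contained route is to form the eigenvector $\xi=e_S\big(\sum_{j=0}^{n-1}\varphi(c^{-j})c^j\big)=n\,e_Se_\varphi\in e_S\BBC W$, which satisfies $\xi c=\varphi(c)\xi$, show that $\xi\ne 0$ and that $\xi\BBC W=e_S\BBC W$ (using the computations of \S\ref{prelim}, in particular the behaviour of $e_S$ under $N(W)$ and under $w_0$ from Lemmas~\ref{lem:n} and~\ref{lem:w0}), and conclude by the dimension count that left multiplication by $e_S$ carries $e_\varphi\BBC W=\Ind_{Z_W(c)}^W(\varphi)$ isomorphically onto $e_S\BBC W$.

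For (b) I would run the parallel argument inside $A^{n-1}=A_{X_{(n)}}$, which is spanned by the monomials $a_{t_1}\dotsm a_{t_{n-1}}$ with $H_{t_1}\cap\dots\cap H_{t_{n-1}}=X_{(n)}$, i.e.\ with the transpositions $t_i$ forming a spanning tree on $\{1,\dots,n\}$. The Orlik--Solomon relations let one reduce every such monomial to a $\BBC$-combination of $W$-translates of a fixed tree monomial, so a suitable single $W$-orbit spans $A^{n-1}$. Applying the idempotent $e_{\epsilon_c\varphi}$ of $\BBC\langle c\rangle$ to such a monomial produces a nonzero $\langle c\rangle$-eigenvector $\eta\in A^{n-1}$ with $c\eta=(\epsilon_c\varphi)(c)\,\eta$, where the extra sign relative to (a) is exactly the sign picked up from the anticommutation and circuit relations when $c\cdot(\text{tree monomial})$ is rewritten. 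One then checks $\eta\BBC W=A^{n-1}$, obtaining a surjection $\Ind_{Z_W(c)}^W(\epsilon_c\varphi)\to A^{n-1}$, an isomorphism by dimension. Alternatively, since it is classical that $A^{n-1}\cong\epsilon\otimes\mathrm{Lie}_n$, part (b) follows formally from part (a).

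The main obstacle in both parts is the same point: verifying that one distinguished $\langle c\rangle$-eigenvector generates the whole module under $W$, which for $E_n$ is equivalent to the assertion that $e_S$ is a Lie idempotent, and for $A^{n-1}$ is essentially the analysis of Lehrer and Solomon \cite[\S2]{lehrersolomon:symmetric}. Tied to this is the need to pin down the \emph{precise} eigenvalue --- $\varphi(c)=\zeta\inverse$ in (a), $(\epsilon_c\varphi)(c)$ in (b) --- rather than merely some primitive $n$-th root of unity; for $E_n$ this forces one to track how $c$ acts on $e_Se_\varphi$ and to rule out the other $\langle c\rangle$-eigenspaces occurring in $e_S\BBC W$, which is where the structural results of \S\ref{prelim} do the real work.
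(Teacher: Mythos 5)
Your high‑level strategy matches the paper's: show the primitive idempotent $f^+=e_\varphi$ of $\BBC\langle c\rangle$ acts invertibly on $e_n=e_S$ from the right (so that $e_n f^+ \ne 0$ and $e_n\BBC W = e_n f^+\BBC W$), then finish by the dimension count $\dim f^+\BBC W=(n-1)!=\dim E_n$; and analogously with $f^-$ acting on $a_n$ for part (b). You also correctly reduce (b) to (a) modulo the sign twist, and you correctly note that $\alpha_c$ is trivial here.

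The problem is the step you flag as ``the main obstacle'' is precisely the step you have not carried out, and the tools you point to would not close it. You propose to show $\xi=n\,e_S e_\varphi\ne 0$ and $\xi\BBC W=e_S\BBC W$ ``using the computations of \S\ref{prelim}, in particular the behaviour of $e_S$ under $N(W)$ and under $w_0$ from Lemmas~\ref{lem:n} and~\ref{lem:w0}.'' Those two lemmas play no role in the paper's proof of Theorem~\ref{top}, and they do not by themselves give invertibility of $f^+$ on $e_S$. The paper's actual mechanism is: (1) the cited BBHT fact that $e_S x_J = 0$ for $J\ne S$; (2) the combinatorial identity of Lemma~\ref{lem:xk} and Corollary~\ref{cor:xkwk} that $b^+(n,k)=(-1)^k x_kw_k$, hence $e_S b^+(n,k)=0$ for $1\le k\le n-1$; (3) the resulting recursion $e_S c^k = e_S b^+(n-1,k)$ (Proposition~\ref{prop}(b)); and (4) the factorization $e_S f^+ = \tfrac1n e_S(1-\zeta c_1)\dotsm(1-\zeta c_{n-1})$, each factor of which is explicitly invertible on $e_S$ because $\zeta$ is a primitive $n$th root of unity. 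Without those ingredients your ``self‑contained route'' is a plan, not a proof. Your alternative route via Klyachko's theorem and the free‑Lie‑algebra dictionary is mathematically valid (the paper itself acknowledges in the introduction that (a) can be extracted from Klyachko, Reutenauer, Garsia--Reutenauer, and BBG), but it substitutes a citation for the argument the paper actually constructs, and it does not prove that $e_S$ is a Lie idempotent -- it assumes it. The same critique applies to your sketch of (b): ``one then checks $\eta\BBC W=A^{n-1}$'' is the content of Lehrer--Solomon's analysis, which the paper invokes and you have not reproduced.
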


Statement (b) has been proved by Stanley \cite[Theorem 7.2]{stanley:aspects}
and by Lehrer and Solomon \cite[Theorem 3.9]{lehrersolomon:symmetric}. As
mentioned in the introduction, a proof of (a) may be extracted from
classical results about the representation of $S_n$ on the free Lie algebra
on $n$ letters. In contrast, our proof below that the character of $W$ on
$E_n$ is $\Ind_{Z_W(c)}^W(\varphi)$ follows the Lehrer-Solomon argument and
demonstrates a parallelism between the group algebra and the Orlik-Solomon
algebra that we expect will apply in some form to all finite Coxeter
groups. In addition, our argument is valid not only for $E_n$ and $e_n$, but
more generally for $E_{(n)}^\sigma$ and $e_{(n)}^\sigma$ for any function
$\sigma\colon 2^S\to \BBR_{>0}$.

To emphasize and differentiate the parallel arguments, we use the convention
that the superscript $+$ denotes quantities associated with $E_n$ and the
superscript $-$ denotes quantities associated with $A_n$. Notice that with
the notation of \S\ref{sec:expl}, we have $X_c=\{0\}$, and so $\alpha_c=
\det|_{X_c}$ is the trivial character.

Suppose $t$ is an indeterminate. For $0\leq k\leq n$, define elements
$b^+(n,k)$ and $b^-(n,k)$ in $\BBC W$ by
\[
(1 - c_1t)(1 - c_2t) \dotsm (1 - c_{n}t) = \sum_{k=0}^{n} b^+(n,k) t^k
\]
and
\[
(1 + c_{n}t)(1 - c_{n-1}t) \dotsm (1 + (-1)^{n-1}c_{1}t) = \sum_{k=0}^{n}
b^-(n,k) t^k
\]
respectively (the $k\th$ factor in the product on the left-hand side of the
last equation is $(1+ (-1)^{k-1} c_{n-k+1} t)$).

Set $W_{n-1}=\langle s_1, s_2, \dots, s_{n-2} \rangle$. Then $W_{n-1} \cong
S_{n-1}$. The analog of the idempotent $e_n$ in $E_n$ is the basis element
$a_n= a_{s_1} a_{s_2} \dotsm a_{s_{n-1}}$ in $A_n=A^{n-1}$.  Lehrer and
Solomon \cite[\S3]{lehrersolomon:symmetric} prove the following statements:
\begin{itemize}
\item[(i)] $A_n=\BBC W a_n$.
\item[(ii)] $c^{-k} a_n= b^-(n-1,k) a_n$ for $0\leq k\leq n-1$. In
  particular, $A_n= \BBC W_{n-1} a_n$.
\item[(iii)] Consider the homomorphism of left $\BBC W$-modules from $\BBC
  W$ to $A_n$ given by right multiplication by $a_n$. The kernel of this
  mapping is the left $\BBC W_{n-1}$-module generated by $\{\, c^{-k}-
  b^-(n-1,k) \mid 1\leq k\leq n-1\,\}$.
\item[(iv)] $\{\, wa_n\mid w\in W_{n-1}\,\}$ is a $\BBC$-basis of $A_n$ and
  $A_n$ is the left regular $\BBC W_{n-1}$-module.
\end{itemize}
Next we show that the analogous statements hold with $A_n$ replaced by $E_n$
and $b^-(n,k)$ replaced by $b^+(n,k)$.

For $k=1, 2, \dots, n-1$, define $x_k=x_{S\setminus \{s_k\}}$ and
$w_k=c_1c_2\dotsm c_{k}$. Then $w_k$ is the longest element in $\langle s_1,
s_2, \dots, s_{k-1} \rangle$.

\begin{lemma}\label{lem:xk}
  Suppose $1\leq k\leq n-1$. Then
  \[ 
  W^{S\setminus \{s_k\}} w_k=\{\, c_{i_1} \dotsm c_{i_k}\mid 1\leq i_1<\dots
  < i_k\leq n\,\}.
  \]
\end{lemma}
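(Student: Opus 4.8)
The plan is to identify the set $W^{S\setminus\{s_k\}}w_k$ explicitly by describing both sides in terms of the descent of a permutation and its action on the basis $\{v_1,\dots,v_n\}$. Recall that $W^{S\setminus\{s_k\}}$ is the set of minimal-length left coset representatives of $W_{S\setminus\{s_k\}}=\langle s_1,\dots,s_{k-1},s_{k+1},\dots,s_{n-1}\rangle$ in $W$; concretely, $w\in W^{S\setminus\{s_k\}}$ if and only if $\ell(ws)>\ell(w)$ for all $s\ne s_k$, which is the standard combinatorial statement that $w$ is the unique minimal-length element with a prescribed "ascent set''. In permutation terms, $W_{S\setminus\{s_k\}}\cong S_k\times S_{n-k}$ acts on $\{v_1,\dots,v_k\}$ and $\{v_{k+1},\dots,v_n\}$ separately, and a well-known fact about such Grassmannian cosets is that $W^{S\setminus\{s_k\}}$ is exactly the set of permutations $w$ such that $w\inverse(v_1),\dots,w\inverse(v_k)$ appear in increasing order of index and likewise $w\inverse(v_{k+1}),\dots,w\inverse(v_n)$; equivalently, the $k$-element subset $\{w\inverse(1),\dots,w\inverse(k)\}$ (written as a set of indices) together with the requirement that $w$ is increasing on each of the two blocks completely determines $w$. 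So $|W^{S\setminus\{s_k\}}|=\binom nk$, indexed by $k$-subsets of $\{1,\dots,n\}$.

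Next I would analyze the right-hand side. The element $c_i=s_{i-1}\dotsm s_2 s_1$ is the $i$-cycle $(1\,2\,\dots\,i)$ acting on the $v_j$'s (sending $v_1\mapsto v_2\mapsto\dots\mapsto v_i\mapsto v_1$ and fixing the rest), so a product $c_{i_1}\dotsm c_{i_k}$ with $1\le i_1<\dots<i_k\le n$ is a specific kind of permutation. The key computational observation is a reduced-word / length count: because each $c_{i_j}$ has length $i_j-1$ and the $i_j$ are strictly increasing, one checks that $\ell(c_{i_1}\dotsm c_{i_k})=\sum_j(i_j-1)$, i.e. the product of these reduced words is itself reduced — this is the crux of the argument and the step I expect to be the main obstacle, since one must verify there is no cancellation, probably by induction on $k$ together with a deletion/exchange argument or by directly exhibiting the inversion set. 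Granting this, $\{\,c_{i_1}\dotsm c_{i_k}\mid 1\le i_1<\dots<i_k\le n\,\}$ is a set of $\binom nk$ elements, matching $|W^{S\setminus\{s_k\}}w_k|$.

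With the cardinalities equal, it suffices to prove one inclusion, and I would argue $c_{i_1}\dotsm c_{i_k}\in W^{S\setminus\{s_k\}}w_k$ for each increasing tuple. Equivalently, writing $w_k=c_1c_2\dotsm c_k$ (the longest element of $\langle s_1,\dots,s_{k-1}\rangle$, reversing $\{v_1,\dots,v_k\}$), I want $c_{i_1}\dotsm c_{i_k}\,w_k\inverse\in W^{S\setminus\{s_k\}}$. Here I would use Lemma~\ref{lem:shift}(a): since $w_k\inverse=w_k$ has $w_k(\Delta_{S\setminus\{s_k\}})\subseteq\Delta$ would not quite apply directly, so instead I would argue combinatorially — compute the permutation $c_{i_1}\dotsm c_{i_k}\,w_k\inverse$ acting on the $v_j$'s and verify that it sends $\{1,\dots,k\}$ increasingly and $\{k+1,\dots,n\}$ increasingly onto its image, hence lies in $W^{S\setminus\{s_k\}}$. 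A clean way to see the increasing-on-blocks condition: one shows by the length identity above that $\ell(c_{i_1}\dotsm c_{i_k})=\ell(c_{i_1}\dotsm c_{i_k}w_k\inverse)+\ell(w_k)$, so $c_{i_1}\dotsm c_{i_k}w_k\inverse$ is the minimal-length representative of its coset $c_{i_1}\dotsm c_{i_k}w_k\inverse W_{S\setminus\{s_k\}} = c_{i_1}\dotsm c_{i_k}W_{\langle s_1,\dots,s_{k-1}\rangle}\cdots$; combined with the bijection between these cosets and $k$-subsets, and the fact that distinct increasing tuples $(i_1,\dots,i_k)$ give distinct subsets $\{i_1-?,\dots\}$ after tracking the action, this yields the claim. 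Finally, multiplying on the right by $w_k$ restores the stated form, completing the proof.
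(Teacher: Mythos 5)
Your broad plan --- count both sides and prove one containment --- matches the paper's, but two of the steps you lean on do not hold up. The paper instead argues by direct verification: it identifies $W^{S\setminus\{s_k\}}$ with the set of permutations $w$ satisfying $w(j)<w(j+1)$ for all $j\neq k$, computes $w=c_{i_1}\dotsm c_{i_k}w_k$ explicitly, finds $w(j)=i_j$ for $1\le j\le k$, and then checks $w(j)<w(j+1)$ for $j>k$ by a short case analysis on whether $j\geq i_k$ or $k<j<i_k$. This single computation yields both the containment and, from $w(j)=i_j$, the injectivity of $(i_1,\dots,i_k)\mapsto c_{i_1}\dotsm c_{i_k}$, hence the cardinality match.

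The first gap is that, even granting the length formula $\ell(c_{i_1}\dotsm c_{i_k})=\sum_j(i_j-1)$ which you flag as unproved, it does not follow that the $\binom nk$ products are pairwise distinct: distinct increasing tuples can have equal $\sum_j(i_j-1)$, so a length count cannot separate them, and without distinctness the right-hand side might have fewer than $\binom nk$ elements. The second gap is the deduction that $\ell(y w_k)=\ell(y)+\ell(w_k)$ makes $y=c_{i_1}\dotsm c_{i_k}w_k^{-1}$ the minimal-length representative of its coset $yW_{S\setminus\{s_k\}}$. The standard criterion requires length-additivity against the longest element of $W_{S\setminus\{s_k\}}\cong S_k\times S_{n-k}$, whereas $w_k$ is only the longest element of $\langle s_1,\dots,s_{k-1}\rangle\cong S_k$, the first factor. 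So this only yields $\ell(ys_i)>\ell(y)$ for $1\le i\le k-1$ and says nothing about $s_{k+1},\dots,s_{n-1}$; repairing it means checking the ascent condition on the second block directly, which is exactly the case analysis the paper carries out, so the length-formula detour does not actually save work.
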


\begin{proof}
  It suffices to show that if $1\leq i_1<\dots <i_k\leq n$, then $c_{i_1}
  \dotsm c_{i_k} w_k$ is in $W^{S\setminus \{s_k\}}$. For this, we consider
  elements in $W$ as acting on $\{1, \dots, n\}$. That is, we identify the
  vector $v_j$ with $j$ for $1\leq j\leq n$. Then
  \[
  W^{S\setminus \{s_k\}}= \{\, w\in W\mid w(j)<w(j+1)\,\forall\, j\in \{ 1,
  \dots, n-1\} \setminus \{k\}\,\}
  \]
  and 
  \[
  w_k(j)= \begin{cases} k+1-j & 1\leq j\leq k\\ j & k+1\leq j\leq n
    .\end{cases}
  \]

  Fix $i_1$, \dots, $i_k$ with $1\leq i_1<\dots <i_k\leq n$ and set $w=
  c_{i_1} \dotsm c_{i_k} w_k$. If $1\leq j\leq k$, then $w(j)= i_j< i_{j+1}=
  w(j+1)$. If $j\geq i_k$, then $w(j)\leq j<j+1=w(j+1)$.  Suppose that
  $k<j<i_k$. Choose $r$ minimal such that
  \[
  j+1\leq i_k,\ j+1-1\leq i_{k-1},\ \dots,\ j+1-r\leq i_{k-r}\text {, and
  }j+1-r-1 > i_{k-r-1}.
  \]
  Then $w(j)\leq j-r-1< j-r= j+1-r-1=w(j+1)$.
\end{proof}

\begin{corollary}\label{cor:xkwk}
  For $1 \leq k \leq n-1$, we have $b^+(n ,k) = (-1)^k x_k w_k$.
\end{corollary}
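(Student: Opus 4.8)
The plan is to compute both sides explicitly and match them: expand the product defining $b^+(n,k)$, and then recognize $x_k w_k$ as the very same sum by means of Lemma~\ref{lem:xk} together with a dimension count.

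First I would expand the left-hand side of the defining equation for $b^+(n,k)$. The coefficient of $t^k$ in $(1-c_1t)(1-c_2t)\dotsm(1-c_nt)$ is obtained by selecting the term $-c_{i_j}t$ from $k$ of the $n$ factors, with $1\le i_1<i_2<\dots<i_k\le n$, and the term $1$ from the others; since the factors are written in increasing order of index, the resulting product, read left to right, is $(-c_{i_1})(-c_{i_2})\dotsm(-c_{i_k}) = (-1)^k c_{i_1}c_{i_2}\dotsm c_{i_k}$. Hence
\[
b^+(n,k)=(-1)^k\sum_{1\le i_1<\dots<i_k\le n}c_{i_1}c_{i_2}\dotsm c_{i_k}.
\]

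Next I would rewrite $x_k w_k$. Since $x_k = x_{S\setminus\{s_k\}} = \sum_{w\in W^{S\setminus\{s_k\}}} w$ and right multiplication by $w_k$ permutes $W$, we have $x_k w_k = \sum_{y\in W^{S\setminus\{s_k\}}w_k} y$. By Lemma~\ref{lem:xk} the set $W^{S\setminus\{s_k\}}w_k$ equals $\{\,c_{i_1}\dotsm c_{i_k}\mid 1\le i_1<\dots<i_k\le n\,\}$. The parabolic subgroup $W_{S\setminus\{s_k\}} = \langle s_1,\dots,s_{k-1},s_{k+1},\dots,s_{n-1}\rangle$ is isomorphic to $S_k\times S_{n-k}$, so $|W^{S\setminus\{s_k\}}| = \binom nk$, which is exactly the number of strictly increasing $k$-tuples in $\{1,\dots,n\}$. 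Consequently the surjection $(i_1,\dots,i_k)\mapsto c_{i_1}\dotsm c_{i_k}$ onto $W^{S\setminus\{s_k\}}w_k$ supplied by Lemma~\ref{lem:xk} is a bijection, and therefore
\[
x_k w_k=\sum_{1\le i_1<\dots<i_k\le n}c_{i_1}\dotsm c_{i_k}=(-1)^k b^+(n,k).
\]
Multiplying by $(-1)^k$ yields $b^+(n,k)=(-1)^k x_k w_k$, as claimed.

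The argument is routine once Lemma~\ref{lem:xk} is in hand; the only points requiring care are the ordering convention in the noncommutative product (so that no reversal or stray sign is introduced) and the injectivity of the parametrization $(i_1,\dots,i_k)\mapsto c_{i_1}\dotsm c_{i_k}$. The latter I have handled above by counting, but it also follows directly from Lemma~\ref{lem:xk}: regarding $c_{i_1}\dotsm c_{i_k}$ as a permutation of $\{1,\dots,n\}$, one checks that it sends $j\mapsto i_j$ for $1\le j\le k$, so the tuple $(i_1,\dots,i_k)$ can be recovered from the element. I do not anticipate any genuine obstacle here.
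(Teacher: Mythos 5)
Your argument is correct and follows essentially the same route as the paper's proof: expand the generating product to get $b^+(n,k)=(-1)^k\sum_{1\le i_1<\dots<i_k\le n}c_{i_1}\dotsm c_{i_k}$, then invoke Lemma~\ref{lem:xk} to identify this sum with $(-1)^k x_k w_k$. The extra care you take about the bijectivity of the parametrization $(i_1,\dots,i_k)\mapsto c_{i_1}\dotsm c_{i_k}$ is a genuine point that the paper leaves implicit, and your counting argument (both sides have cardinality $\binom nk$) settles it cleanly.

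One small slip in your closing aside: under the matrix (left-action) convention the paper is using, $c_{i_1}\dotsm c_{i_k}$ does \emph{not} send $j\mapsto i_j$ for $1\le j\le k$; for instance $c_{i_1}c_{i_2}(1)=c_{i_1}(i_2)=i_2$, not $i_1$. It is the product $c_{i_1}\dotsm c_{i_k}w_k$ that sends $j\mapsto i_j$, as shown in the proof of Lemma~\ref{lem:xk}. Since multiplying by the fixed element $w_k$ is a bijection, this still recovers the tuple and confirms injectivity, but the statement as you wrote it is off. Your primary justification by cardinality is unaffected.
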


\begin{proof}
  Using the definition and Lemma \ref{lem:xk} we have
  \[
  b^+(n,k)= (-1)^k \sum_{1\leq i_1<\dots <i_k\leq n} c_{i_1}\dotsm c_{i_k} =
  (-1)^k x_kw_k.
  \]
\end{proof}

\begin{proposition}\label{prop}
  The following analogs of (i)--(iv) above hold.
  \begin{itemize}
  \item[\sl (a)] $E_n=e_n \BBC W$.
  \item[\sl (b)] $e_n c^k = e_n b^+(n-1, k)$ for $0\leq k\leq n-1$. In
    particular, $E_n= e_n \BBC W_{n-1}$.
  \item[\sl (c)] Consider the endomorphism of $\BBC W$ considered as a right
    $\BBC W$-module given by left multiplication by $e_n$. The kernel of
    this mapping is the free, right $\BBC W_{n-1}$-module with basis $\{\,
    c^{k}- b^+(n-1,k) \mid 1\leq k\leq n-1\,\}$.
  \item[\sl (d)] $\{\, e_nw\mid w\in W_{n-1}\,\}$ is a $\BBC$-basis of $E_n$
    and $E_n$ is the right regular $\BBC W_{n-1}$-module.
  \end{itemize}
\end{proposition}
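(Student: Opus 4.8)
The plan is to establish parts (a)--(d) of Proposition~\ref{prop} in parallel with the Lehrer--Solomon statements (i)--(iv), exploiting the structural facts already proved. Part~(a) is immediate: by Lemma~\ref{lem:lambda} with $\lambda=(n)$ and $I=S$ we have $E_n=E_{(n)}=e_{\{S\}}\BBC W=e_S\BBC W=e_n\BBC W$, since $e_{\{S\}}=e_S$ and $I_{(n)}=S$.

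For part~(b) the key computation is $e_n c^k=e_n b^+(n-1,k)$ for $0\le k\le n-1$. The idea is to work inside $\Sigma(W)$. Writing $e_n=e_S=\sum_{J\subseteq S} n_{JS} x_J$ and using Corollary~\ref{cor:xkwk}, which identifies $b^+(n-1,k)$ (computed in the parabolic $W_{n-1}$) with $(-1)^k x_k w_k$ up to the appropriate normalization, one reduces the claim to a relation among the $x_J$'s and the Coxeter elements $c^k$. Concretely, since $e_n$ is the BBHT quasi-idempotent attached to the top shape, multiplication of $e_n$ on the right by a group element $g$ should only depend on $g$ through its image in a quotient that ``sees'' the descent set, and the Coxeter element $c$ has a single prescribed descent; this is exactly the content of Lemma~\ref{lem:xk} rewritten in the group algebra. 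I would verify $e_n c = e_n b^+(n-1,1)$ first (a single-step relation, $b^+(n-1,1)=-(c_1+\dots+c_n)$ suitably interpreted), then iterate, noting that $b^+(n-1,k)\in\BBC W_{n-1}$ by construction so that the recursion stays inside $\BBC W_{n-1}$; this yields $E_n=e_n\BBC W=e_n\BBC W_{n-1}$.

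For parts~(c) and~(d), the strategy mirrors Lehrer--Solomon exactly. Consider $\ell_{e_n}\colon \BBC W\to E_n$, right-$\BBC W$-linear. From (b), the elements $c^k-b^+(n-1,k)$ for $1\le k\le n-1$ lie in the kernel, and since $\{c^k W_{n-1} : 0\le k\le n-1\}$ partitions $W$ into the cosets $W_{n-1}, cW_{n-1},\dots,c^{n-1}W_{n-1}$ (the standard coset decomposition of $S_n$ over $S_{n-1}$), the right $\BBC W_{n-1}$-submodule $M$ generated by these $n-1$ elements has dimension at most $(n-1)\cdot(n-1)!$ and maps to zero. On the other hand, $\dim E_n=\dim E_{(n)}=|\sh^{-1}((n))|=|\{n\text{-cycles}\}|=(n-1)!$ by Corollary~\ref{cor:dimA=dimE}, so $\dim\ker\ell_{e_n}=n!-(n-1)!=(n-1)(n-1)!$. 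Hence $M=\ker\ell_{e_n}$, it is free of rank $n-1$ over $\BBC W_{n-1}$ (the $c^k-b^+(n-1,k)$ are linearly independent over $\BBC W_{n-1}$ because their ``leading terms'' $c^k$ lie in distinct cosets), and consequently the composite $\BBC W_{n-1}\hookrightarrow\BBC W\twoheadrightarrow E_n$ is a $\BBC W_{n-1}$-module isomorphism, giving (d). I expect the main obstacle to be part~(b): translating Lemma~\ref{lem:xk} into the precise identity $e_nc^k=e_nb^+(n-1,k)$ in $\BBC W$ requires care with the normalizing scalars $n_{JS}$ in the definition of $e_n$ and with the fact that $b^+$ is defined via the parabolic $W_{n-1}$ rather than $W$; the cleanest route is probably to prove it first for the quasi-idempotent $e_n$ directly from the defining system~\eqref{eq:system} restricted to subsets containing $S\setminus\{s_k\}$, or alternatively to deduce it from the corresponding statement for $e_n^\sigma$ with general $\sigma$ and then specialize, as the remark after the theorem suggests the argument is uniform in $\sigma$.
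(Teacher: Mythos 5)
Your treatment of parts (a), (c), and (d) matches the paper's: (a) is definitional, and for (c)--(d) you make exactly the right dimension count, using that the powers $c^0,\dots,c^{n-1}$ are a left transversal of $W_{n-1}$ in $W$ so that the $n-1$ elements $c^k-b^+(n-1,k)$ freely generate a $\BBC W_{n-1}$-submodule of dimension $(n-1)\cdot(n-1)!$, which by the dimension of $E_n$ must be the whole kernel.

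The gap is in (b), which you correctly flag as the crux but do not actually close. First, a misstatement: Corollary~\ref{cor:xkwk} says $b^+(n,k)=(-1)^kx_kw_k$ with $x_k=x_{S\setminus\{s_k\}}$ and $w_k$ computed in $W=S_n$; it is \emph{not} a statement about $b^+(n-1,k)$ computed in the parabolic $W_{n-1}$, and identifying the two is where your sketch goes astray. Second, the two facts that actually drive the induction are never named. The paper's proof of (b) rests on (i) the vanishing $e_nx_J=0$ for all $J\subsetneq S$, cited from \cite[Theorem 7.8]{bergeronbergeronhowletttaylor:decomposition}, which together with Corollary~\ref{cor:xkwk} gives $e_nb^+(n,k)=(-1)^ke_nx_kw_k=0$ for $1\le k\le n-1$; and (ii) the telescoping identity $b^+(n,k)=b^+(n-1,k)-b^+(n-1,k-1)c$ obtained by expanding the defining product $\sum_k b^+(n,k)t^k=\bigl(\sum_k b^+(n-1,k)t^k\bigr)(1-c_nt)$. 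Combining (i) and (ii): if $e_nc^{k-1}=e_nb^+(n-1,k-1)$ then $e_nc^k=e_nb^+(n-1,k-1)c=e_n\bigl(b^+(n-1,k)-b^+(n,k)\bigr)=e_nb^+(n-1,k)$. Your alternative suggestions --- ``restrict the defining system to subsets containing $S\setminus\{s_k\}$'' or ``deduce from $e_n^\sigma$ and specialize'' --- do not supply a substitute for the BBHT vanishing, which is what actually makes the base and inductive step work; without it the ``single-step relation $e_nc=e_nb^+(n-1,1)$'' you propose to verify first is itself exactly as hard as the vanishing $e_nb^+(n,1)=0$.
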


\begin{proof}
  The first statement follows immediately from the definitions.

  We prove (b) by recursion. It is clear that $e_n c^k = e_n b^+(n-1, k)$
  for $k=0$, since $b^+(n-1,0) = 1 = c^0$.  Suppose $e_n c^{k-1} = e_n
  b^+(n-1, k-1)$. It follows from \cite[Theorem 7.8]
  {bergeronbergeronhowletttaylor:decomposition} that $e_nx_J=0$ unless
  $J=S$. Thus, it follows from Corollary \ref{cor:xkwk} that $e_n b^+(n,k)=
  (-1)^k e_n x_k w_k=0$ for $1\leq k\leq n-1$. On the other hand, it follows
  from the definition that
  \[ 
  \textstyle \sum\limits _{k=0}^{n} b^+(n,k) t^k= \left( \sum\limits
    _{k=0}^{n-1} b^+(n-1,k) t^k \right) (1 - c_{n}t)
  \]
  and hence $b^+(n,k) = b^+(n-1,k) - b^+(n-1,k-1) c$ for $1\leq k\leq
  n-1$. Therefore,
  \[
  e_nc^k= e_nc^{k-1} c =e_n b^+(n-1,k-1) c= e_nb^+(n-1,k).
  \]

  Next, consider the endomorphism of $\BBC W$ given by $x\mapsto e_nx$.  Let
  $K$ denote the kernel of this mapping and let $K_1$ denote the $\BBC
  W_{n-1}$-submodule of $\BBC W$ generated by $\{\, c^{k}- b^+(n-1,k) \mid
  1\leq k\leq n-1\,\}$.  It follows from (b) that $K_1\subseteq
  K$. Moreover, $\{\, c^{k}- b^+(n-1,k) \mid 1\leq k\leq n-1\,\}$ is a $\BBC
  W_{n-1}$ basis of $K_1$ because the cyclic subgroup generated by $c$ is a
  left transversal of $W_{n-1}$ in $W$.  Therefore, $\dim_\BBC K_1=
  (n-1)(n-1)!$. However,
  \[
  \dim K= \dim \BBC W- \dim E_n= n!-|W:Z_W(c)|= (n-1) (n-1)!= \dim K_1.
  \] 
  Therefore, $K_1=K$. This proves (c).

  Because $b^+(n-1,k)$ is in $\BBC W_{n-1}$ for $1\leq k\leq n-1$, it
  follows from (b) that the image of the mapping $x\mapsto e_nx$ is $e_n
  \BBC W_{n-1}$. Therefore, $E_n= e_n \BBC W_{n-1}$. Since $\dim E_n=
  (n-1)!$, it follows that $\{\, e_nw\mid w\in W_{n-1}\,\}$ is a
  $\BBC$-basis of $E_n$. This proves (d).
\end{proof}

Finally, define idempotents $f^+$ and $f^-$ in $\BBC Z_W(c)$ by
\[
f^+= {\textstyle \frac 1n} \sum_{k=0}^{n-1} \varphi(c^k) c^{-k} =
{\textstyle \frac 1n} \sum_{k=0}^{n-1} \zeta^{-k} c^{-k}
\]
and
\[
f^-= {\textstyle \frac 1n} \sum_{k=0}^{n-1} \epsilon(c)^k \varphi(c^k)
c^{-k} = {\textstyle \frac 1n} \sum_{k=0}^{n-1} \epsilon(c)^k \zeta^{-k}
c^{-k}.
\]
Obviously, the lines $\BBC f^+$ and $\BBC f^-$ in $\BBC W$ are stable under
left and right multiplication by $Z_W(c)$ and afford the characters
$\varphi$ and $\epsilon \varphi$ of $Z_W(c)$, respectively. Moreover,
$\Ind_{Z_W(c)}^W(\varphi)$ is afforded by the right $\BBC W$-module $f^+\BBC
W$, and $\epsilon \Ind_{Z_W(c)}^W(\varphi) =\Ind_{Z_W(c)}^W(\epsilon
\varphi)$ is afforded by the left $\BBC W$-module $\BBC Wf^-$. Thus, to
prove Theorem \ref{top} it is enough to find $\BBC W$-isomorphisms $E_n
\cong f^+ \BBC W$ and $A_n\cong \BBC W f^-$.

\begin{lemma}\label{invert}
  The idempotent $f^+$ acts invertibly by right multiplication on $e_n$, and
  the idempotent $f^-$ acts invertibly by left multiplication on $a_n$.
\end{lemma}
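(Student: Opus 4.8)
The plan is to show that right multiplication by $f^+$ on $e_n$ is invertible by verifying that it is, up to a nonzero scalar, a projection whose image is all of $\BBC f^+$ inside a suitable cyclic module, and then to transfer the argument to $a_n$ and $f^-$ by the evident symmetry (exchanging $b^+(n-1,k)$ with $b^-(n-1,k)$ and $\varphi$ with $\epsilon\varphi$). The key computational input is Proposition~\ref{prop}(b), which says $e_n c^k = e_n b^+(n-1,k)$ for $0\le k\le n-1$, together with the recursion $b^+(n,k) = b^+(n-1,k) - b^+(n-1,k-1)c$ extracted there. From the generating-function definition one reads off, after setting $t = \zeta^{-1}$ (so that $1-c_n t$ does not vanish but the telescoping becomes transparent), a relation expressing $\sum_{k=0}^{n-1}\zeta^{-k}b^+(n-1,k)$ in closed form; the point is that $\prod_{i=1}^{n}(1-c_i t)$ evaluated against $e_n$ collapses because $e_n c_i$ can be rewritten via $e_n b^+(\cdot,\cdot)$, and the scalar that drops out is a nonzero multiple of $n$ coming from $\sum_{k=0}^{n-1}\zeta^{-k}\cdot(\text{unit})$.

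Concretely, first I would compute $e_n f^+ = \tfrac1n\sum_{k=0}^{n-1}\zeta^{-k} e_n c^{-k}$. Since $Z_W(c)=\langle c\rangle$ has order $n$, write $c^{-k}=c^{n-k}$ and apply Proposition~\ref{prop}(b) to get $e_n c^{-k} = e_n b^+(n-1, n-k)$ for $1\le k\le n-1$ and $e_n c^0 = e_n$; but it is cleaner to use $e_n c^{k} = e_n b^+(n-1,k)$ directly and reindex, obtaining $e_n f^+ = \tfrac1n\sum_{k=0}^{n-1}\zeta^{k} e_n b^+(n-1,k)$ after replacing $k$ by $-k \bmod n$ and using $\varphi(c^{-k})=\zeta^{k}$. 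Then $\sum_{k=0}^{n-1}\zeta^{k}b^+(n-1,k)$ is, by the defining product, the value at $t=\zeta$ of $\prod_{i=1}^{n-1}(1-c_i t)$; I claim that acting on $e_n$ this equals a nonzero scalar times $e_n$, because each factor $1 - c_i\zeta$ acts on the one-dimensional-looking object $e_n$ through the eigenvalue relation forced by $e_n c^{k}=e_n b^+(n-1,k)$ and the fact that $c$ acts on $E_n \cong \BBC W_{n-1}$ cyclically with $c^n$ returning to the identity coset. The upshot should be $e_n f^+ = \mu\, e_n$ with $\mu\neq 0$, and since $f^+$ is idempotent and central in $\BBC Z_W(c)$, right multiplication by $f^+$ then maps $e_n\BBC W$ onto $e_n f^+\BBC W = \BBC f^+$-isotypic part and is invertible on the line $\BBC e_n f^+$; combined with Proposition~\ref{prop}(d) this gives the invertibility on $e_n$ itself.

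The same steps run for $f^-$ and $a_n$ using Lehrer--Solomon's statements (i)--(iv): $c^{-k}a_n = b^-(n-1,k)a_n$, the recursion $b^-(n,k)=b^-(n-1,k)+b^-(n-1,k-1)c$ (with the sign pattern built into the definition of the $b^-$), and $\epsilon(c)=(-1)^{n-1}$, so that $f^- a_n = \tfrac1n\sum_k \epsilon(c)^k\zeta^{-k} b^-(n-1,k)\,a_n = \nu\, a_n$ with $\nu$ the value at $t=\epsilon(c)\zeta^{-1}$, up to the evaluation, of $\prod_{i=1}^{n-1}(1+(-1)^{i-1}c_i t)$ acting on $a_n$, which is again a nonzero scalar. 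The main obstacle I anticipate is the bookkeeping in the generating-function evaluation: one must check that the telescoping of $\prod(1-c_i t)$ against $e_n$ really does terminate in a single scalar multiple of $e_n$ rather than a more complicated element of $e_n\BBC W_{n-1}$, and that this scalar is a cyclotomic quantity (a product of terms $1-\zeta^{j}$ or a power of $n$) that is provably nonzero — most safely by reducing to the statement that $c$ acts on the regular $\BBC W_{n-1}$-module $E_n$ with $f^+$ picking out a genuinely nonzero eigencomponent, i.e.\ that $\varphi$ actually occurs in the restriction to $Z_W(c)$ of the character of $E_n$, which at this point of the paper one still has to arrange by a direct count (the dimension of $f^+E_n$) rather than by invoking Theorem~\ref{top}.
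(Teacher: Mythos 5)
Your computation up through the identity
\[
e_n f^+ \;=\; \tfrac1n\,e_n\,(1-\zeta c_1)\dotsm(1-\zeta c_{n-1})
\]
is correct and is exactly the starting point of the paper's proof, including the reindexing $\tfrac1n\sum_k\zeta^{-k}c^{-k}=\tfrac1n\sum_k\zeta^k c^k$. The gap is in what you do with the right-hand side. You claim the product $(1-\zeta c_1)\dotsm(1-\zeta c_{n-1})$ ``acts on $e_n$'' as a nonzero \emph{scalar} $\mu$, so that $e_n f^+=\mu\,e_n$. That is false. Already for $n=3$ one has $e_3 f^+ = \tfrac{1-\zeta}{3}\bigl(e_3-\zeta e_3 s_1\bigr)$, and by Proposition~\ref{prop}(d) the elements $e_3$ and $e_3 s_1$ form a basis of the two-dimensional space $E_3=e_3\BBC W_2$, so $e_3 f^+$ is \emph{not} proportional to $e_3$. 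The obstacle you flag at the end (``one must check that the telescoping\ldots really does terminate in a single scalar multiple of $e_n$'') is thus a real one, and it does not work out the way you hope.

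The correct observation, which replaces the scalar claim, is that each factor $1-\zeta c_k$ is a \emph{unit} in $\BBC W$ -- not because of any eigenvalue relation on $e_n$, but for the elementary reason that $c_k^k=1$ and hence
\[
(1-\zeta c_k)\bigl(1+\zeta c_k+\dots+\zeta^{k-1}c_k^{k-1}\bigr)=1-\zeta^k,
\]
a nonzero scalar since $\zeta$ is a primitive $n$th root of unity and $1\le k\le n-1$. Therefore $e_n f^+=\tfrac1n\,e_n u$ with $u$ invertible in $\BBC W$, which is precisely the meaning of ``$f^+$ acts invertibly by right multiplication on $e_n$'': one gets $e_n = e_n f^+\cdot(nu\inverse)$, so $e_n f^+\BBC W=e_n\BBC W=E_n$, as is needed in the proof of Theorem~\ref{top}. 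Your proposed ``fallback'' (checking that $\varphi$ occurs in $E_n|_{Z_W(c)}$) would at best show $E_n f^+\ne0$, which neither implies $e_n f^+\ne0$ nor the stronger ideal equality that the lemma asserts; it does not repair the gap. For $f^-$ and $a_n$ you are on safer ground, since the paper simply cites Lehrer--Solomon; but the same caution applies if you try to reproduce their argument: $f^- a_n$ is $a_n$ premultiplied by a unit, not a scalar multiple of $a_n$.
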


\begin{proof}
  Lehrer and Solomon \cite[\S3]{lehrersolomon:symmetric} show that $f^-$
  acts invertibly on $a_n$. Their argument is easily modified to show that
  $f^+$ acts invertibly by right multiplication on $e_n$ as follows.

  We have $(1 - c_1\zeta) \dotsm (1 - c_{n-1}\zeta) = \sum_{k=0}^{n-1}
  b^+(n-1,k) \zeta^k$. Multiply both sides on the left by $\frac 1n e_n$ and
  use Proposition \ref{prop}(b) to get
  \[
  \textstyle\frac 1n e_n (1 - \zeta c_1) \dotsm (1 -\zeta c_{n-1}) = \frac
  1n \sum\limits _{k=0}^{n-1}\zeta^k e_nb^+(n-1,k) = \frac 1n \sum\limits
  _{k=0}^{n-1} \zeta^k e_n c^k= e_n f^+.
  \]
  If $1\leq k\leq n-1$, then
  \[
  1-\zeta^k =1-\zeta^kc_k^k= (1-\zeta c_k)(1+\zeta c_k+ \dots + \zeta^{k-1}
  c_k^{k-1}).
  \]
  Since $\zeta$ is a primitive $n\th$ root of unity, $1-\zeta^k\ne 0$ in
  $\BBC$. Thus, $1-\zeta c_k$ acts invertibly on $e_n$ for $1\leq k\leq n-1$
  and so $f^+$ acts invertibly on $e_n$.
\end{proof}

\begin{proof}[Proof of Theorem \ref{top}]
  (See \cite[\S3]{lehrersolomon:symmetric}.) Consider the mapping from
  $f^+\BBC W$ to $E_n$ given by $x\mapsto e_nx$. It follows from Lemma
  \ref{invert} and the discussion preceding it that $e_nf^+\ne0$, that
  $Z_W(c)$ acts on the line $\BBC e_n f^+$ in $E_n$ as the character
  $\varphi$, and that the mapping is a surjection. Since $\dim f^+\BBC W=
  |W:Z_W(c)|= (n-1)!= \dim E_n$, the mapping is also an injection. Thus, we
  have an isomorphism of right $\BBC W$-modules, $E_n\cong f^+\BBC W$.

  As in \cite[\S3]{lehrersolomon:symmetric}, similar reasoning applies to
  the mapping from $\BBC Wf^-$ to $A_n$ given by $x\mapsto xa_n$ and shows
  that $A_n\cong \BBC W f^-$.
\end{proof}

%%%%%%%%%%%%%%%%%%%%%%%%%%%%%%%%%%%%%%%%%%%%%%%%%%%%%%%%%%%%%%%%%%%%%%
%%%%%%%%%%%%% \6 Symmetric groups: arbitrary $\lambda$
%%%%%%%%%%%%%%%%%%%%%%%%%%%%%%%%%%%%%%%%%%%%%%%%%%%%%%%%%%%%%%%%%%%%%%
\section{Symmetric groups: arbitrary $\lambda$ }\label{arbitrary}

In this section we consider the case of an arbitrary partition of $n$ and
complete the proof of Conjecture \ref{conj} for symmetric groups.

Suppose $\lambda=(\lambda_1, \lambda_2, \dots, \lambda_p)$ is a partition of
$n$. Recall from \S\ref{lambda=n} that $I_\lambda= S\setminus \{s_{\tau_1},
s_{\tau_2}, \dots, s_{\tau_{p-1}} \}$ and that $W_\lambda=\langle I_\lambda
\rangle$ is isomorphic to the product of symmetric groups
$S_{\lambda_1}\times \dots \times S_{\lambda_p}$, where the factor
$S_{\lambda_i}$ acts on $\{v_{\tau_{i-1}+1}, v_{\tau_{i-1}+2}, \dots,
v_{\tau_{i}} \}$. For $1\leq i\leq p$ define $g_{\lambda_i}= s_{\tau_{i}-1}
\dotsm s_{\tau_{i-1}+2} s_{\tau_{i-1}+1}$. Then $g_{\lambda_i}$ is the
$\lambda_i$-cycle in $S_{\lambda_i}$ that corresponds to the $n$-cycle
$c=c_n$ in \S\ref{lambda=n}. Next, define $c_\lambda=g_{\lambda_1}
g_{\lambda_2} \dotsm g_{\lambda_p}$ and $Z_\lambda=Z_{W_\lambda}
(c_\lambda)$. Then
\begin{itemize}
\item $c_\lambda$ is a cuspidal element in $W_\lambda$,
\item the set of cuspidal elements in $W_\lambda$ is precisely the conjugacy
  class of $c_\lambda$, and
\item $Z_\lambda \cong \langle g_{\lambda_1} \rangle \times \langle
  g_{\lambda_2} \rangle \times \dots \times \langle g_{\lambda_p} \rangle$.
\end{itemize}

Notice that $\{\, c_\lambda\mid \text{$\lambda$ is a partition of $n$}\,\}$
is a complete set of conjugacy class representatives in $W$.

With $\lambda$ as above, for $1\leq i\leq p$, define $\varphi_{\lambda_i}$
to be the character of $\langle g_{\lambda_i} \rangle$ with
$\varphi_{\lambda_i}( g_{\lambda_i} \inverse)= e^{2\pi i/\lambda_i}$. Then
$\varphi_{\lambda_i}$ is the analog of the character $\varphi$ in
\S\ref{lambda=n} for the factor $S_{\lambda_i}$ of $W_\lambda$. Next, define
the character $\varphi_\lambda$ of $Z_\lambda \cong \langle g_{\lambda_1}
\rangle \times \langle g_{\lambda_2} \rangle \times \dots \times \langle
g_{\lambda_p} \rangle$ to be
\[
\varphi_\lambda=\varphi_{\lambda_1} \otimes \dotsm \otimes
\varphi_{\lambda_p}.
\]
Note that this notation is not consistent with that of Lehrer and Solomon;
our character $\varphi_\lambda$ corresponds to the character
$\varphi_\lambda \epsilon$ in \cite{lehrersolomon:symmetric}. Applying the
special case $\lambda=(n)$ considered in \S\ref{lambda=n} to each factor
$S_{\lambda_i}$ of $W_\lambda$, for $1\leq i\leq p$ define
\[
f_{\lambda_i}^+= {\textstyle \frac 1{\lambda_i}} \sum_{k=0}^{\lambda_i-1}
\varphi_{\lambda_i}(g_{\lambda_i}^k) g_{\lambda_i}^{-k} \quad \text{and}
\quad f_{\lambda_i}^-= {\textstyle \frac 1{\lambda_i}}
\sum_{k=0}^{\lambda_i-1} \epsilon(g_{\lambda_i}^k) \varphi(g_{\lambda_i}^k)
g_{\lambda_i}^{-k}.
\]
Finally, define idempotents $f_\lambda^+$ and $f_\lambda^-$ in $\BBC
Z_\lambda$ by
\[
f_\lambda^+= f_{\lambda_1}^+ f_{\lambda_2}^+ \dotsm f_{\lambda_p}^+ \quad
\text{and} \quad f_\lambda^-= f_{\lambda_1}^- f_{\lambda_2}^- \dotsm
f_{\lambda_p}^-.
\]
Obviously, the lines $\BBC f_\lambda^+$ and $\BBC f_\lambda^-$ in $\BBC W$
are stable under left and right multiplication by $Z_\lambda$ and afford the
characters $\varphi_\lambda$ and $\epsilon \varphi_\lambda$ of $Z_\lambda$,
respectively.

Now consider the canonical complement $N_{X_\lambda}$ of $W_\lambda$ in
$N_W( W_\lambda)$. Set $N_\lambda=N_{X_\lambda}$. If $\lambda$ has $m_j$
parts equal to $j$, then $N_\lambda$ is isomorphic to the product of
symmetric groups $\prod_j S_{m_j}$ (see \cite{howlett:normalizers} or
\cite{lehrersolomon:symmetric}). In particular, $N_\lambda$ has one Coxeter
generator, say $r_i$, for each $i$ such that $\lambda_i=\lambda_{i+1}$. The
generator $r_i$ acts on the set $\{v_1, v_2, \dots, v_n\}$ by interchanging
$v_{\tau_{i-1}+j}$ and $v_{\tau_{i}+j}$ for $1\leq j\leq \lambda_i$, and
fixing $v_k$ for $k\leq \tau_{i-1}$ and $k> \tau_{i+1}$.

It is well-known and easy to check (\cite{lehrersolomon:symmetric},
\cite{konvalinkapfeifferroever:centralizers}) that $N_\lambda\subseteq
Z_W(c_\lambda)$, and so $Z_W(c_\lambda) \cong Z_\lambda \rtimes N_\lambda$.

\begin{lemma}\label{lem:stab}
  The subgroup $N_\lambda$ of $Z_W(c_\lambda)$ stabilizes the characters
  $\varphi_\lambda$ and $\epsilon \varphi_\lambda$ of $Z_\lambda$ and
  centralizes the idempotents $f_\lambda^+$ and $f_\lambda^-$. In
  particular, $\varphi_\lambda$ extends to a character, also denoted by
  $\varphi_\lambda$, of $Z_W(c_\lambda)$, with
  $\varphi_\lambda(nz)=\varphi_\lambda(z)$ for $n$ in $N_\lambda$ and $z$ in
  $Z_\lambda$.
\end{lemma}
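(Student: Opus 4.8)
The plan is to exploit the product structure of everything in sight. By construction $Z_\lambda \cong \langle g_{\lambda_1}\rangle \times \dots \times \langle g_{\lambda_p}\rangle$, the character $\varphi_\lambda = \varphi_{\lambda_1} \otimes \dots \otimes \varphi_{\lambda_p}$ is an exterior tensor product, and $f_\lambda^{\pm} = f_{\lambda_1}^{\pm} \dotsm f_{\lambda_p}^{\pm}$ is a product of the corresponding minimal idempotents. So the whole statement reduces to understanding how a generator $r_i$ of $N_\lambda$ acts on these pieces. First I would recall explicitly how $r_i$ acts: it interchanges the blocks $\{v_{\tau_{i-1}+1},\dots,v_{\tau_i}\}$ and $\{v_{\tau_i+1},\dots,v_{\tau_{i+1}}\}$ (where $\lambda_i = \lambda_{i+1}$) while fixing the other blocks pointwise. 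Consequently conjugation by $r_i$ fixes $g_{\lambda_j}$ for $j \neq i, i+1$ and swaps $g_{\lambda_i}$ with $g_{\lambda_{i+1}}$; more precisely, since the two blocks have the same size and $r_i$ identifies them in the evident order-preserving way, one checks directly from the cycle formula $g_{\lambda_i} = s_{\tau_i-1}\dotsm s_{\tau_{i-1}+1}$ that $r_i g_{\lambda_i} r_i^{-1} = g_{\lambda_{i+1}}$ and $r_i g_{\lambda_{i+1}} r_i^{-1} = g_{\lambda_i}$.

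Given this, the character computation is immediate. For $z = g_{\lambda_1}^{a_1} \dotsm g_{\lambda_p}^{a_p} \in Z_\lambda$ and a generator $r_i$ with $\lambda_i = \lambda_{i+1} =: m$, we have $r_i z r_i^{-1} = g_{\lambda_1}^{a_1}\dotsm g_{\lambda_i}^{a_{i+1}} g_{\lambda_{i+1}}^{a_i} \dotsm g_{\lambda_p}^{a_p}$, so
\[
\varphi_\lambda(r_i z r_i^{-1}) = \Bigl(\prod_{j \neq i,i+1} \varphi_{\lambda_j}(g_{\lambda_j}^{a_j})\Bigr)\, \varphi_m(g_{\lambda_i}^{a_{i+1}})\, \varphi_m(g_{\lambda_{i+1}}^{a_i}) = \varphi_\lambda(z),
\]
because $\varphi_{\lambda_i}$ and $\varphi_{\lambda_{i+1}}$ are literally the same character $\varphi_m$ of a cyclic group of order $m$ (both send the standard $m$-cycle's inverse to $e^{2\pi i/m}$), so swapping the exponents $a_i \leftrightarrow a_{i+1}$ leaves the product of the two factors unchanged. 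Since the $r_i$ generate $N_\lambda$, this shows $N_\lambda$ stabilizes $\varphi_\lambda$. The sign character $\epsilon$ is a class function, hence $N_\lambda$-invariant on all of $W$, so $\epsilon\varphi_\lambda$ is stabilized as well. For the idempotents: $f_\lambda^+ = \frac{1}{|Z_\lambda|}\sum_{z \in Z_\lambda}\varphi_\lambda(z) z^{-1}$ (expanding the product of the $f_{\lambda_i}^+$), and conjugating by $r_i$ permutes the summation index $z \mapsto r_i z r_i^{-1}$ while fixing $\varphi_\lambda(z)$ by the previous computation, so $r_i f_\lambda^+ r_i^{-1} = f_\lambda^+$; likewise for $f_\lambda^-$. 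Finally, since $N_\lambda$ stabilizes $\varphi_\lambda$ and $Z_W(c_\lambda) = Z_\lambda \rtimes N_\lambda$ with $Z_\lambda$ abelian, the extension statement is the standard fact that a one-dimensional character of a normal abelian subgroup which is invariant under a complement extends (uniquely, with the complement in its kernel relative to that trivialization) to the whole group; one simply sets $\varphi_\lambda(nz) = \varphi_\lambda(z)$ and checks multiplicativity using $N_\lambda$-invariance.

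The only genuinely non-formal point is verifying $r_i g_{\lambda_i} r_i^{-1} = g_{\lambda_{i+1}}$ from the explicit generator descriptions — i.e. confirming that $r_i$ identifies the two equal-size blocks in the precise way that intertwines the two standard cycles, rather than in some twisted way that would conjugate $g_{\lambda_i}$ to a different power or a different cycle. This is a short direct check with the permutation formulas for $r_i$ (given in the paragraph preceding the lemma) and for $g_{\lambda_i}$, and it is the linchpin on which both the character invariance and the idempotent centralization rest; everything downstream is bookkeeping with tensor products and reindexing sums. I would not expect any difficulty beyond keeping the block indices straight.
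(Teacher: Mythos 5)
Your argument is correct and follows the paper's proof in all essentials: the key computation is that $r_i$ swaps $g_{\lambda_i}$ and $g_{\lambda_{i+1}}$ while fixing the other $g_{\lambda_j}$, that $\varphi_{\lambda_i}=\varphi_{\lambda_{i+1}}$ as characters of isomorphic cyclic groups when $\lambda_i=\lambda_{i+1}$, and that $N_\lambda$ therefore stabilizes the characters and centralizes the idempotents. The only cosmetic difference is in the idempotent step, where the paper observes that conjugation by $N_\lambda$ permutes the pairwise-commuting factors $f_{\lambda_i}^\pm$ and hence fixes their product, while you expand $f_\lambda^\pm$ as $|Z_\lambda|^{-1}\sum_{z\in Z_\lambda}\varphi_\lambda(z)z^{-1}$ (resp.\ with $\epsilon\varphi_\lambda$) and reindex the sum — both are equivalent one-line arguments.
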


\begin{proof}
  Suppose that $i$ is such that $\lambda_i=\lambda_{i+1}$ and consider the
  generator $r_i$ of $N_\lambda$. Then $r_i$ is an involution and it follows
  from the description of the action of $r_i$ on the basis $\{ v_1, \dots,
  v_n\}$ of $V$ that
  \[
  r_i g_{\lambda_j} r_i\inverse= r_i g_{\lambda_j} r_i=
  \begin{cases}  g_{\lambda_{i+1}}& j=i\\ g_{\lambda_{i}} &j=i+1\\
    g_{\lambda_j}&j\ne i, i+1.
  \end{cases}
  \]
  Since $\varphi_\lambda(g_{\lambda_i})= \varphi_\lambda(
  g_{\lambda_{i+1}})$, it follows that $r_i$ stabilizes $\varphi_\lambda$
  and $\epsilon \varphi_\lambda$.

  The group $N_\lambda$ is generated by $\{\, r_i\mid \lambda_i=
  \lambda_{i+1} ,\}$ and so $N_\lambda$ stabilizes the characters
  $\varphi_\lambda$ and $\epsilon \varphi_\lambda$ of $Z_\lambda$. Moreover,
  $N_\lambda$ acts on $\{ g_{\lambda_1}, \dots, g_{\lambda_p}\}$ by
  conjugation as a group of permutations. Thus, it follows from the
  definition of $f_{\lambda_i}^+$ and $f_{\lambda_i}^-$ that conjugation by
  $N_\lambda$ permutes $\{ f_{\lambda_1}^+, \dots, f_{\lambda_p}^+\}$ and
  $\{ f_{\lambda_1}^-, \dots, f_{\lambda_p}^-\}$. Since the
  $f_{\lambda_i}^+$'s pairwise commute and the $f_{\lambda_i}^-$'s pairwise
  commute, we see that $N_\lambda$ centralizes both $f_{\lambda_1}^+ \dotsm
  f_{\lambda_p}^+ =f_\lambda^+$ and $f_{\lambda_1}^- \dotsm f_{\lambda_p}^-
  =f_\lambda^-$.
\end{proof}

Set $\alpha_\lambda= \alpha_{X_\lambda}$. Then $\alpha_\lambda$ is a
character of $N_W(W_\lambda)$ and $\alpha_\lambda (r_i)= -1$.  Note that
this notation is not consistent with that of Lehrer and Solomon; our
character $\alpha_\lambda$ corresponds to the character $\alpha_\lambda
\epsilon$ in \cite{lehrersolomon:symmetric} as
$\epsilon(r_i)=(-1)^{\lambda_i}$.

\begin{theorem}\label{any}
  Suppose that $\lambda$ is a partition of $n$. Then the
  $N_W(W_\lambda)$-modules $e_{I_\lambda} \BBC W_\lambda$ and
  $A_{X_\lambda}$, and the character $\varphi_\lambda$ of $Z_W(c_\lambda)$,
  are related by
  \begin{itemize}
  \item[(a)] the character of the right $N_W(W_\lambda)$-module
    $e_{I_\lambda} \BBC W_\lambda$ is $\Ind_{Z_W(c_\lambda)} ^{
      N_W(W_\lambda)} (\varphi_\lambda)$ and
  \item[(b)] the character of the left $N_W(W_\lambda)$-module
    $A_{X_\lambda}$ is $\epsilon\, \alpha_\lambda\, \Ind_{Z_W (c_\lambda)}
    ^{ N_W(W_\lambda)} (\varphi_\lambda)$.
  \end{itemize} 
\end{theorem}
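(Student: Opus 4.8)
The plan is to reduce Theorem~\ref{any} to the case $\lambda = (n)$ already handled in Theorem~\ref{top}, using the multiplicativity results established in \S\ref{prelim}. The key point is that everything in sight factors as a product over the parts of $\lambda$: the parabolic $W_\lambda \cong S_{\lambda_1} \times \dotsm \times S_{\lambda_p}$, the cuspidal element $c_\lambda = g_{\lambda_1} \dotsm g_{\lambda_p}$, its centralizer $Z_\lambda \cong \prod_i \langle g_{\lambda_i}\rangle$, the idempotents $f_\lambda^\pm = f_{\lambda_1}^\pm \dotsm f_{\lambda_p}^\pm$, and (by Proposition~\ref{pro:red}, applied repeatedly) the BBHT quasi-idempotent $e_{I_\lambda} = e_{I_{\lambda_1}} \dotsm e_{I_{\lambda_p}}$ where each factor is the quasi-idempotent $e_{\lambda_i}$ of the $i$-th symmetric group factor. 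Similarly $A_{X_\lambda}$ decomposes: since $(V,\CA)$ for $W_\lambda$ is the product arrangement, $A_{X_\lambda} \cong A_{\lambda_1} \otimes \dotsm \otimes A_{\lambda_p}$ as a tensor product of Orlik–Solomon top pieces, and the basis element $a_{I_\lambda} = a_{s_1}\dotsm \widehat{\phantom{s}} \dotsm$ factors correspondingly.

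First I would record the factorization of $e_{I_\lambda}$ via Proposition~\ref{pro:red} and the corresponding factorization of $A_{X_\lambda}$, and then invoke Theorem~\ref{top} on each factor $S_{\lambda_i}$ to get $\BBC W_{\lambda_i}$-module isomorphisms $e_{\lambda_i} \BBC S_{\lambda_i} \cong f_{\lambda_i}^+ \BBC S_{\lambda_i}$ and $A_{\lambda_i} \cong \BBC S_{\lambda_i} f_{\lambda_i}^-$, the first via right multiplication by $f_{\lambda_i}^+$ on $e_{\lambda_i}$, the second via left multiplication by $f_{\lambda_i}^-$ on $a_{\lambda_i}$ (this is the content of Lemma~\ref{invert} and the proof of Theorem~\ref{top}). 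Taking tensor products over $i$ gives $\BBC W_\lambda$-module isomorphisms $e_{I_\lambda} \BBC W_\lambda \cong f_\lambda^+ \BBC W_\lambda$ and $A_{X_\lambda} \cong \BBC W_\lambda f_\lambda^-$; at the level of $W_\lambda$-characters this already says $e_{I_\lambda}\BBC W_\lambda$ affords $\Ind_{Z_\lambda}^{W_\lambda}(\varphi_\lambda)$ and $A_{X_\lambda}$ affords $\epsilon|_{W_\lambda}\,\Ind_{Z_\lambda}^{W_\lambda}(\varphi_\lambda)$, the analog of Theorem~\ref{top} for the reducible group $W_\lambda$.

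The remaining work is to upgrade these $W_\lambda$-isomorphisms to $N_W(W_\lambda)$-isomorphisms and identify the induced character. Here I would use that $N_W(W_\lambda) = W_\lambda \rtimes N_\lambda$ and that, by Lemma~\ref{lem:stab}, $N_\lambda$ centralizes both $f_\lambda^+$ and $f_\lambda^-$ and stabilizes $\varphi_\lambda$, so $\varphi_\lambda$ extends to $Z_W(c_\lambda) = Z_\lambda \rtimes N_\lambda$ trivially on $N_\lambda$. For (a): the line $\BBC f_\lambda^+$ is $N_W(W_\lambda)$-stable (acting via $a \cdot wn = n\inverse a wn$ on $\BBC W_\lambda$ as in Lemma~\ref{lem:Nequiv}) and affords the extended $\varphi_\lambda$ of $Z_W(c_\lambda)$; the map $x \mapsto e_{I_\lambda} x$ from $f_\lambda^+ \BBC W_\lambda$ to $e_{I_\lambda}\BBC W_\lambda$ is $N_W(W_\lambda)$-equivariant and bijective by the factorized version of Lemma~\ref{invert} (right multiplication by $f_\lambda^+$ is invertible on $e_{I_\lambda}$ since it is invertible factor-by-factor), so $e_{I_\lambda}\BBC W_\lambda \cong f_\lambda^+ \BBC W_\lambda \cong \Ind_{Z_W(c_\lambda)}^{N_W(W_\lambda)}(\varphi_\lambda)$, the last isomorphism because $Z_W(c_\lambda)$ has order $|W_\lambda : Z_\lambda| \cdot \dots$ — more cleanly, $f_\lambda^+ \BBC W_\lambda$ is already visibly the induced module from the one-dimensional $\BBC f_\lambda^+$ over $\BBC Z_W(c_\lambda)$, since $|W_\lambda| / |Z_\lambda| = |N_W(W_\lambda)| / |Z_W(c_\lambda)|$ and $W_\lambda$ is a transversal argument modulo $N_\lambda$. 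For (b): the same argument with $\BBC W_\lambda f_\lambda^-$ and the map $x \mapsto x\,a_{I_\lambda}$ gives $A_{X_\lambda} \cong \BBC W_\lambda f_\lambda^-$ as $N_W(W_\lambda)$-modules; the twist by $\epsilon\,\alpha_\lambda$ rather than just $\epsilon$ arises because $N_\lambda$ acts on $A_{X_\lambda}$ — unlike on $\BBC W_\lambda$ — through its permutation of the hyperplane generators $a_{s_i}$, and the generator $r_i$ reorders the blocks, contributing a sign $\alpha_\lambda(r_i) = -1$ beyond the sign $\epsilon(r_i) = (-1)^{\lambda_i}$ carried by $f_\lambda^-$; tracking these signs on the generators $r_i$ of $N_\lambda$ and comparing with $\epsilon(g_{\lambda_j})$ pins down the character as $\epsilon\,\alpha_\lambda\,\Ind_{Z_W(c_\lambda)}^{N_W(W_\lambda)}(\varphi_\lambda)$.

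The main obstacle will be part (b): carefully computing the action of the complement $N_\lambda$ on the top Orlik–Solomon space $A_{X_\lambda}$ and on the element $a_{I_\lambda}$, and disentangling which part of the resulting character is $\epsilon$, which is $\alpha_\lambda$, and which is the extended $\varphi_\lambda$ — especially because, as the authors warn, their normalization of $\varphi_\lambda$ and $\alpha_\lambda$ differs from Lehrer–Solomon's by a sign character. The book-keeping of signs under the block permutations $r_i$, using that $r_i$ conjugates $g_{\lambda_i} \leftrightarrow g_{\lambda_{i+1}}$ and permutes the corresponding wedge factors in $A_{X_\lambda}$, is the delicate step; everything else is a formal tensor-product bootstrapping of Theorem~\ref{top} together with Lemma~\ref{lem:stab} and the equivariance statements of \S\ref{prelim}.
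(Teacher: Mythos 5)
Your plan is essentially the paper's proof: factorize over the parts of $\lambda$ using Proposition~\ref{pro:red}, apply the $\lambda=(n)$ case (via Lemma~\ref{invert}) factor by factor to see that $f_\lambda^\pm$ acts invertibly, use Lemma~\ref{lem:stab} to handle $N_\lambda$, and close with a dimension count to identify the induced module. The one place you should be careful is that Proposition~\ref{pro:red} factorizes the quasi-idempotent $e_{I_\lambda}^{I_\lambda}$ defined relative to $(W_\lambda, I_\lambda)$, not $e_{I_\lambda}$ itself (which lives in $\BBC W$, not $\BBC W_\lambda$); the paper first invokes Proposition~\ref{pro:commutes} to pass from $e_{I_\lambda}\BBC W_\lambda$ to $e_{I_\lambda}^{I_\lambda}\BBC W_\lambda$ as $N_W(W_\lambda)$-modules before the tensor-product bootstrap, and you would need that same step.
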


\begin{proof}
  Statement (b) has been proved by Lehrer and Solomon \cite[Theorem
  4.4]{lehrersolomon:symmetric}. Their argument may be rephrased as
  follows. Extending the definition of the element $a_n$ in $A_n$ when
  $\lambda=(n)$, Lehrer and Solomon define an element $a_\lambda$ in
  $A_{X_\lambda}$ on which $f_\lambda^-$ acts invertibly. Then:
  \begin{itemize}
  \item[(i)] $Z_W(c_\lambda)$ acts on the line $\BBC f_\lambda^- a_\lambda$
    in $A_{X_\lambda}$ via the character $\epsilon_\lambda \alpha_\lambda
    \varphi_\lambda$.
  \item[(ii)] $A_{X_\lambda}= \BBC N_W(W_\lambda) f_\lambda^- a_\lambda$.
  \item[(iii)] The multiplication map $\BBC N_W(W_\lambda) \otimes_{\BBC
      Z_W(c_\lambda)} \BBC f_\lambda^-a_\lambda \to A_{X_\lambda}$ is an
    isomorphism.
  \end{itemize}
  Therefore, $A_{X_\lambda} \cong \Ind_{Z_W (c_\lambda)} ^{ N_W(W_\lambda)}
  (\BBC f_\lambda^- a_\lambda )$ and hence the character of $A_{X_\lambda}$
  is indeed $\Ind_{Z_W (c_\lambda)} ^{ N_W(W_\lambda)} ( \epsilon_\lambda
  \alpha_\lambda \varphi_\lambda)$.

  To prove (a) we first note that by Proposition \ref{pro:commutes},
  $e_{I_\lambda} \BBC W_\lambda$ and $e_{I_\lambda} ^{I_\lambda} \BBC
  W_\lambda$ are isomorphic right $N_W(W_\lambda)$-modules and so it
  suffices to prove that $e_{I_\lambda} ^{I_\lambda} \BBC W_\lambda$ affords
  the character $\Ind_{Z_W(c_\lambda)} ^{ N_W(W_\lambda)}
  (\varphi_\lambda)$. We argue as for $A_{X_\lambda}$ with $e_{I_\lambda}
  ^{I_\lambda}$ in place of $a_\lambda$.

  For the rest of this proof we fix a partition $\lambda=(\lambda_1, \dots,
  \lambda_p)$ of $n$. To simplify the notation, set $I=I_\lambda$ and
  $e=e_I^I$.  It suffices to show that the line $\BBC e f_\lambda^+$ in the
  right $N_W(W_\lambda)$-module $e \BBC W_\lambda$ satisfies properties
  analogous to (i), (ii), and (iii) above.

  \begin{itemize}
  \item[(i$'$)] {\sl $Z_W(c_\lambda)$ acts on the line $\BBC e f_\lambda^+$
      via the character $\varphi_\lambda$:} We have seen in Lemma
    \ref{lem:n} that $N_\lambda$ centralizes $e$ and in Lemma \ref{lem:stab}
    that $N_\lambda$ centralizes $f_\lambda^+$.  Thus, $N_\lambda$
    centralizes $e f_\lambda^+$ and $Z_W( c_\lambda) =N_\lambda Z_\lambda$
    acts on the line $\BBC e f_\lambda^+$ via the character
    $\varphi_\lambda$ if $e f_\lambda^+\ne 0$.

    Let $\tau \colon 2^I\to \BBR_{>0}$ be the function that takes the
    constant value $1$. We have $I=\coprod_{j=1}^p I_j$ where
    $W_{\lambda_j}= \langle I_j \rangle$ and so $e=e_I^\tau = e_{I_1} ^\tau
    \dotsm e_{I_p} ^\tau$ by Proposition \ref{pro:red}. Therefore,
    \[
    e f_\lambda^+= (e_{I_1}^{\tau} \dotsm e_{I_p} ^{\tau}) \, (
    f_{\lambda_1}^+ \dotsm f_{\lambda_p}^+) = (e_{I_1}^{\tau} f_{I_1}^+)
    \dotsm (e_{I_p}^{\tau} f_{\tau_p}^+).
    \]
    For $1\leq j\leq p$, the idempotent $e_{I_j}^{\tau}$ in $\BBC
    W_{\lambda_j}$ is defined using the partition $(\lambda_j)$ of
    $\lambda_j$ as in \S\ref{lambda=n} and so the idempotent
    $f_{\lambda_j}^+$ acts as a unit on $e_{I_j}^{\tau}$ by Lemma
    \ref{invert}. Therefore, $f_\lambda^+$ acts invertibly by right
    multiplication on $e$ and so $ef_\lambda^+\ne0$.

  \item[(ii$'$)] {\sl $e\BBC W_\lambda= e f_\lambda^+ \BBC N_W(W_\lambda)$:}
    Because $f_\lambda^+$ acts invertibly on $e$ and $N_W(W_\lambda)=
    N_\lambda Z_\lambda W_\lambda$, we have
    \[
    e \BBC W_\lambda= e f_\lambda^+\BBC W_\lambda = ef_\lambda^+ \BBC
    N_\lambda Z_\lambda W_\lambda = e f_\lambda^+ \BBC N_W(W_\lambda) .
    \]

  \item[(iii$'$)] {\sl The multiplication map $\BBC ef_\lambda^+
      \otimes_{\BBC Z_W(c_\lambda)} \BBC N_W(W_\lambda) \to e\BBC W_\lambda$
      is an isomorphism:} It follows from (ii$'$) that the mapping is
    surjective. Moreover, using Corollary \ref{cor:dimA=dimE} we have
    \begin{align*}
      \dim e f_\lambda^+ \BBC N_W(W_\lambda) &= \dim e \BBC W_\lambda\\
      &= |W_\lambda: Z_\lambda|\\
      &= |N_W(W_\lambda): Z_W(c_\lambda)|\\
      &= \dim \BBC ef_\lambda^+ \otimes_{\BBC Z_W(c_\lambda)} \BBC
      N_W(W_\lambda)
    \end{align*}
    and so the mapping is an isomorphism.
  \end{itemize}

  This completes the proof of the theorem.
\end{proof}

The proof of Conjecture \ref{conj} for symmetric groups now follows from
Proposition \ref{pro:induct}, Theorem \ref{any}, and transitivity of
induction.

\begin{theorem}\label{thm:main}
  For each partition $\lambda$ of $n$ there is a linear character
  $\varphi_\lambda$ of $Z_W(c_\lambda)$ such that
  \begin{itemize}
  \item[\sl (a)] the character of $E_\lambda$ is $\Ind_{Z_W (c_\lambda) }^W
    (\varphi_\lambda)$ and
  \item[\sl (b)] the character of $A_\lambda$ is $\Ind_{Z_W(c_\lambda)}^W
    (\epsilon_\lambda \alpha_\lambda \varphi_\lambda )$, where
    $\epsilon_\lambda$ denotes the restriction of $\epsilon$ to
    $Z_W(c_\lambda)$.
  \end{itemize}
  In particular,
  \[
  H^p(M_W)\cong \bigoplus_{\lambda\vdash n, \,\rk(c_\lambda)=p} \Ind_{
    Z_W(c_\lambda)}^W (\epsilon_\lambda \alpha_\lambda \varphi_\lambda)
  \]
  for $0\leq p\leq n-1$, and
  \[
  \BBC W\cong \bigoplus_{\lambda\vdash n} \Ind_{Z_W
    (c_\lambda)}^W(\varphi_\lambda) \quad\text{and}\quad A\cong \bigoplus_{
    \lambda\vdash n} \Ind_{Z_W(c_\lambda)}^W(\epsilon_\lambda \alpha_\lambda
  \varphi_\lambda).
  \]
\end{theorem}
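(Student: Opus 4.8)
The plan is to assemble the theorem from the two structural results already established — Proposition~\ref{pro:induct}, which realizes $E_\lambda$ and $A_\lambda$ as modules induced up from the normalizer $N_W(W_\lambda)$, and Theorem~\ref{any}, which identifies the relevant $N_W(W_\lambda)$-characters — together with transitivity of induction and the projection formula (tensor identity). Fix a partition $\lambda$ of $n$, write $N=N_W(W_\lambda)$, and let $c_\lambda$ be the cuspidal element of $W_\lambda$ chosen in \S\ref{arbitrary}, so that $\Fix(c_\lambda)=X_\lambda$ and hence $\rk(c_\lambda)=\codim X_\lambda=d_\lambda$.

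For part (a): applying Proposition~\ref{pro:induct}(a) with $\sigma\equiv 1$ and $L=I_\lambda$ gives $E_\lambda\cong\Ind_{N}^W(e_{I_\lambda}\BBC W_\lambda)$ as $\BBC W$-modules, and Theorem~\ref{any}(a) says that the right $N$-module $e_{I_\lambda}\BBC W_\lambda$ affords the character $\Ind_{Z_W(c_\lambda)}^{N}(\varphi_\lambda)$. Transitivity of induction then yields that the character of $E_\lambda$ is $\Ind_{N}^W\Ind_{Z_W(c_\lambda)}^{N}(\varphi_\lambda)=\Ind_{Z_W(c_\lambda)}^W(\varphi_\lambda)$, which is (a).

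For part (b): Proposition~\ref{pro:induct}(b) gives $A_\lambda\cong\Ind_{N}^W(A_{X_\lambda})$, and Theorem~\ref{any}(b) says the left $N$-module $A_{X_\lambda}$ has character $\epsilon\,\alpha_\lambda\,\Ind_{Z_W(c_\lambda)}^{N}(\varphi_\lambda)$, with $\epsilon$ restricted to $N$ and $\alpha_\lambda=\alpha_{X_\lambda}$. The one point needing care is that $\alpha_\lambda$ is only defined on $N$, not on $W$; so I would first use the projection formula inside $N$ to rewrite $\alpha_\lambda\otimes\Ind_{Z_W(c_\lambda)}^{N}(\varphi_\lambda)=\Ind_{Z_W(c_\lambda)}^{N}(\alpha_\lambda|_{Z_W(c_\lambda)}\otimes\varphi_\lambda)$, noting $\alpha_\lambda|_{Z_W(c_\lambda)}=\alpha_{c_\lambda}$ because $\Fix(c_\lambda)=X_\lambda$. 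Then, since $\epsilon$ is a genuine character of $W$, the projection formula at the level of $W$ together with transitivity of induction give that the character of $A_\lambda$ equals $\epsilon\otimes\Ind_{N}^W\Ind_{Z_W(c_\lambda)}^{N}(\alpha_{c_\lambda}\varphi_\lambda)=\epsilon\otimes\Ind_{Z_W(c_\lambda)}^W(\alpha_{c_\lambda}\varphi_\lambda)=\Ind_{Z_W(c_\lambda)}^W(\epsilon_\lambda\alpha_\lambda\varphi_\lambda)$, which is (b).

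The displayed consequences are then immediate. Since $\{\,c_\lambda\mid\lambda\vdash n\,\}$ is a complete set of conjugacy class representatives (recorded in \S\ref{arbitrary}), summing (a) over $\lambda$ and using $\BBC W\cong\bigoplus_\lambda E_\lambda$ gives the decomposition of $\BBC W$; summing (b) and using $A\cong\bigoplus_\lambda A_\lambda$ together with $A\cong H^*(M_W)$ gives the decompositions of $A$ and of $H^*(M_W)$; and taking the degree-$p$ component $A^p\cong\bigoplus_{d_\lambda=p}A_\lambda$, with $d_\lambda=\rk(c_\lambda)$, gives the formula for $H^p(M_W)$. I do not expect a real obstacle here: the substantive work has been done in Theorem~\ref{any}. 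The only things to watch are the bookkeeping around the twist by $\alpha_\lambda$ — which lives on the normalizer rather than on $W$, so it must be pushed inside an induction rather than pulled outside — and keeping the left- versus right-module conventions of Proposition~\ref{pro:induct} and Theorem~\ref{any} consistent.
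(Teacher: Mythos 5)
Your proposal is correct and follows exactly the paper's own route: the paper's proof is the one sentence ``The proof of Conjecture~\ref{conj} for symmetric groups now follows from Proposition~\ref{pro:induct}, Theorem~\ref{any}, and transitivity of induction,'' which is precisely what you have fleshed out. Your careful handling of the $\alpha_\lambda$ twist via the projection formula is sound (and in fact the proof of Theorem~\ref{any}(b) already shows the nested form $A_{X_\lambda}\cong\Ind_{Z_W(c_\lambda)}^{N_W(W_\lambda)}(\epsilon_\lambda\alpha_\lambda\varphi_\lambda)$, so transitivity alone suffices there), and the left/right caveat has no character-level effect.
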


%%%%%%%%%%%%%%%%%%%%%%%%%%%%%%%%%%%%%%%%%%%%%%%%%%%%%%%%%%%%%%%%%%%%%%
%%%%%%%%%%%%% \7 Parabolic subgroups of type $A$
%%%%%%%%%%%%%%%%%%%%%%%%%%%%%%%%%%%%%%%%%%%%%%%%%%%%%%%%%%%%%%%%%%%%%%
\section{Parabolic subgroups of type $A$}\label{sec:rel}

In this section we return to the case when $W$ is an arbitrary finite
Coxeter group and prove a relative version of Theorem~\ref{thm:main}.

Suppose that $\lambda$ is in $\Lambda$, $c$ is in $W$ with $\sh(c)=\lambda$,
and that all the irreducible components of $W_c$ are of type $A$. Without
loss of generality we may assume that $W_c=W_L$ is a standard parabolic
subgroup. Suppose that $L=\coprod_{i=1}^p L_i$ where $|L_i|=l_i$ and each
$L_i$ is of type $A_{l_i}$. Define $n_i=l_i+1$. Then
\[
W_L \cong W_{L_1} \times \dots \times W_{L_p} \cong S_{n_1} \times \dots
\times S_{n_p}.
\]
We assume that $l_1\geq \dots \geq l_p$, $L_i = \{s_{i,1}, \dots, s_{i,
  l_i}\}$, and $\Delta_{L_i}= \{\alpha_{i,1}, \dots, \alpha_{i,l_i} \,\}$,
where the labeling is such that $s_{i,j}$ and $s_{i,k}$ commute if
$|j-k|>1$.

Define $c_i= s_{i,l_i} \cdots s_{i,2} s_{i,1}$ in $W_{L_i}$ and $c=c_1\dotsm
c_p$. Then $c$ is a cuspidal element in $W_L$ and the set of cuspidal
elements in $W_L$ is precisely the $W_L$-conjugacy class of $c$. For $1\leq
i\leq p$ define $\varphi_{i}$ to be the character of $\langle c_i \rangle$
with $\varphi_i( c_{i} \inverse)= \zeta_{n_i}$ where $\zeta_{n_i}$ is a
fixed primitive $n_i\th$ root of unity, and set $\varphi_c= \varphi_1\otimes
\dotsm \otimes \varphi_p$. Then $\varphi_c$ is a character of
$Z_{W_L}(c)\cong \langle c_1\rangle \times \dotsm \times \langle c_p
\rangle$.

The rest of this section is devoted to the proof of the following theorem.

\begin{theorem}\label{thm:rel}
  The character $\varphi_c$ of $Z_{W_L}(c)$ extends to a character
  $\widetilde{\varphi}_c$ of $Z_W(c)$ such that
  \[
  E_\lambda \cong \Ind_{Z_W(c)}^W(\widetilde{\varphi}_c) \quad\text{and}
  \quad A_{\lambda} \cong \Ind_{Z_W(c)}^W(\epsilon_c \alpha_c
  \widetilde{\varphi}_c ).
  \]
\end{theorem}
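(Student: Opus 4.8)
**The plan is to reduce Theorem~\ref{thm:rel} to the product-of-symmetric-groups case treated in Theorem~\ref{any}, using the normalizer/complement structure from \S\ref{central} together with the induction machinery of \S\ref{induced}.** The starting point is the observation that $W_c = W_L$ is a parabolic subgroup whose irreducible factors are all of type $A$, so $W_L \cong S_{n_1} \times \dots \times S_{n_p}$ and the constructions of \S\ref{lambda=n} and \S\ref{arbitrary} apply verbatim inside $W_L$. In particular, taking the partition of each $n_i$ to be $(n_i)$ itself (i.e.\ running the $\lambda = (n)$ analysis of \S\ref{lambda=n} in each factor), we obtain the cuspidal element $c = c_1 \dotsm c_p$, the character $\varphi_c = \varphi_1 \otimes \dotsm \otimes \varphi_p$ of $Z_{W_L}(c) \cong \langle c_1\rangle \times \dotsm \times \langle c_p\rangle$, and the idempotent $f_c^+ = f_1^+ \dotsm f_p^+ \in \BBC Z_{W_L}(c)$ with $\BBC f_c^+$ affording $\varphi_c$. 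By Corollary~\ref{cor:conc} (applied to the shape $\{L\}$ of $W$, with $\sigma \equiv 1$), we have $E_\lambda \cong \Ind_{N_W(W_L)}^W(e_L^L \BBC W_L)$, and by Proposition~\ref{pro:induct}(b) we have $A_\lambda \cong \Ind_{N_W(W_X)}^W(A_X)$ for $X = X_L$. So by transitivity of induction it suffices to produce, for a suitable subgroup of $N_W(W_L)$ on which $\varphi_c$ extends, the $N_W(W_L)$-module isomorphisms analogous to those in Theorem~\ref{any}.

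**The key step is to analyze the action of $N_W(W_L)$ and to extend $\varphi_c$ to $Z_W(c)$.** By \S\ref{central}, $W_L$ has a complement $N_L = N_{X_L}$ in $N_W(W_L)$, so $N_W(W_L) \cong W_L \rtimes N_L$, and $Z_W(c) \subseteq N_W(W_L)$ with $Z_W(c) W_L = N_W(W_L)$ and $Z_W(c)/Z_{W_L}(c) \cong N_L$; thus $Z_W(c) \cong Z_{W_L}(c) \rtimes N_L$. The complement $N_L$ acts by conjugation on the set of type-$A$ factors $\{W_{L_1}, \dots, W_{L_p}\}$, but — and this is the crucial difference from the symmetric-group case — $N_L$ need \emph{not} act simply by permuting factors of equal rank; it can act on each factor $W_{L_i}$ through a diagram automorphism (the nontrivial automorphism of $A_{l_i}$, which sends $c_i$ to $c_i^{-1}$). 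Therefore one must check how $N_L$ acts on $Z_{W_L}(c) = \langle c_1\rangle \times \dots \times \langle c_p\rangle$, and in particular whether $N_L$ stabilizes the character $\varphi_c$. In general it will \emph{not} stabilize $\varphi_c$ itself — but since $Z_W(c)/Z_{W_L}(c) \cong N_L$ is abelian and $\varphi_c$ is one-dimensional, the orbit of $\varphi_c$ under $N_L$ together with a cocycle computation shows $\varphi_c$ still extends to a linear character $\widetilde{\varphi}_c$ of $Z_W(c)$ (any one-dimensional representation of a normal abelian subgroup with abelian quotient extends, since $H^2$ of an abelian group with coefficients in $\BBC^\times$ need not vanish in general but here the relevant obstruction does — more carefully, one uses that $Z_W(c)$ is itself abelian: indeed $Z_W(c)$ is the centralizer of an element, and when the factors are type $A$ one can check $Z_W(c)$ is abelian, so every one-dimensional character of the subgroup $Z_{W_L}(c)$ that is $N_L$-stable as a \emph{set of extensions} extends). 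The cleanest route: show $Z_W(c)$ is abelian and $\widetilde{\varphi}_c$ is determined by choosing its values on coset representatives of $Z_{W_L}(c)$ compatibly.

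**The final step mirrors the proof of Theorem~\ref{any}.** Having fixed $\widetilde{\varphi}_c$, build the corresponding idempotent $\widetilde{f}_c^+ \in \BBC Z_W(c)$ with $\BBC \widetilde{f}_c^+$ affording $\widetilde{\varphi}_c$. Set $e = e_L^L$. One checks: (i$'$) $Z_W(c)$ acts on the line $\BBC e\,\widetilde{f}_c^+$ via $\widetilde{\varphi}_c$ — this uses that $\widetilde{f}_c^+$ restricts to $f_c^+$ on $\BBC Z_{W_L}(c)$, that $f_c^+$ acts invertibly on $e$ by Lemma~\ref{invert} applied factorwise and Proposition~\ref{pro:red}, and that $N_L$ interacts correctly with $e$ via Lemma~\ref{lem:n} and Lemma~\ref{lem:restrict}(c); (ii$'$) $e\BBC W_L = e\,\widetilde{f}_c^+\,\BBC N_W(W_L)$, using $N_W(W_L) = N_L Z_{W_L}(c) W_L$ and invertibility of $f_c^+$ on $e$; (iii$'$) the multiplication map $\BBC e\widetilde{f}_c^+ \otimes_{\BBC Z_W(c)} \BBC N_W(W_L) \to e\BBC W_L$ is an isomorphism by a dimension count using Corollary~\ref{cor:dimA=dimE} and $|N_W(W_L):Z_W(c)| = |W_L:Z_{W_L}(c)| = \dim e\BBC W_L$. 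This gives $e_L^L \BBC W_L \cong \Ind_{Z_W(c)}^{N_W(W_L)}(\widetilde{\varphi}_c)$ as $N_W(W_L)$-modules; inducing up to $W$ and invoking Corollary~\ref{cor:conc} yields $E_\lambda \cong \Ind_{Z_W(c)}^W(\widetilde{\varphi}_c)$. The statement for $A_\lambda$ follows either by the same argument applied to Lehrer--Solomon's element $a_\lambda$ (which lives in $A_{X_L}$ and on which $f_c^-$ acts invertibly), tracking the extra twist by $\epsilon_c \alpha_c$ coming from the action of $N_L$ on $A_{X_L}$ exactly as in Theorem~\ref{any}(b), or by combining the $E_\lambda$ result with the comparison between $E_\lambda$ and $A_\lambda$ implicit in the parallel structure.

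**The main obstacle** I expect is the extension step: verifying that $\varphi_c$ extends to a \emph{linear} character of $Z_W(c)$ when $N_L$ acts on the factors through diagram automorphisms and not merely by permutation. In the symmetric-group case (Lemma~\ref{lem:stab}) $N_\lambda$ literally fixes $\varphi_\lambda$ because it only permutes equal cycles; here one genuinely needs the structural input from \cite{konvalinkapfeifferroever:centralizers} and \cite{howlettlehrer:duality} on the centralizer, and the key point that for type-$A$ factors the resulting $Z_W(c)$ is abelian — hence one-dimensional characters of subgroups extend — whereas Lehrer--Solomon's "trivial extension" recipe is no longer available. The warning in the text after Conjecture~\ref{conj} ("$\widetilde{\varphi}_c$ may not be the trivial extension") is precisely flagging this subtlety.
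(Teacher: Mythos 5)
Your high-level plan — reduce to an $N_W(W_L)$-module statement via Corollary~\ref{cor:conc} and Proposition~\ref{pro:induct}, then mirror (i$'$)--(iii$'$) from the proof of Theorem~\ref{any} with a suitable extension $\widetilde{\varphi}_c$ of $\varphi_c$ — matches the paper's strategy. However, the crucial extension step is handled incorrectly, and this is precisely where the paper does the real work.

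Your proposed justification for the extension relies on the claim that $Z_W(c)$ is abelian when $W_L$ has all type-$A$ factors. This is false: take $W = S_6$, $W_L = S_2\times S_2\times S_2$, and $c$ a product of three disjoint transpositions; then $Z_W(c)\cong (\BBZ/2)^3\rtimes S_3$, which is non-abelian. So no general extension principle from abelianness is available, and the abstract cocycle argument you gesture at is not carried out. You also conflate $N_L$ with the complement that actually matters: item (2) of \S\ref{central} gives an abstract isomorphism $Z_W(c)/Z_{W_L}(c)\cong N_L$, but $N_L$ itself need not lie inside $Z_W(c)$ (the $g_i$ act by inversion on $c_i$, hence generally do not centralize $c$), and the paper explicitly flags this complication. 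What the paper uses instead is the \emph{concrete} complement $N_c = \langle r_i, h_j\rangle$ to $Z_{W_L}(c)$ in $Z_W(c)$ from \cite{konvalinkapfeifferroever:centralizers}, where $h_j = g_j w_j$ ($w_j$ the longest element of $W_{L_j}$): the $h_j$ centralize every $c_k$, and the $r_i$ permute factors of equal rank, so $N_c$ centralizes $f_L^+$ and $f_L^-$. The extension $\widetilde{\varphi}_c$ is then \emph{defined} as the character of $Z_W(c)$ on the line $\BBC e_L^L f_L^+$, and its values on generators are computed directly: $\widetilde{\varphi}_c(r_i)=1$ and $\widetilde{\varphi}_c(h_i)=(-1)^{l_i}$, the latter coming from $e_L^L w_i = (-1)^{l_i} e_L^L$ (Lemma~\ref{lem:w0}). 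No appeal to any general extension theorem is made.

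A second, smaller gap: your claim that the $A_\lambda$ statement follows ``exactly as in Theorem~\ref{any}(b)'' glosses over a genuine computation. In Theorem~\ref{any}, $N_\lambda$ acted only by permuting equal factors; here the generators $h_i$ centralize $L$ but are nontrivial elements of $Z_W(c)$, and verifying that $Z_W(c)$ acts on $\BBC f_L^- a_L$ by $\epsilon\alpha_c\widetilde{\varphi}_c$ requires evaluating $\epsilon(h_i)\alpha_c(h_i)$ explicitly. This uses \cite[Lemma~2.1]{douglasspfeifferroehrle:inductive} to compute $\epsilon(n)\alpha_c(n)$ as the sign of the permutation of $L$ induced by $n$, plus $\ell(w_i)=\binom{l_i+1}{2}$, yielding $\epsilon(h_i)\alpha_c(h_i)=(-1)^{l_i}=\widetilde{\varphi}_c(h_i)$ — this coincidence is what makes the twist come out right, and it is not automatic from the $E_\lambda$ side.
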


The proof follows the same outline as in \S\ref{arbitrary}: We find lines in
$e_{L} ^{L} \BBC W_L$ and $A_{X_L}$ such that the analogs of statements (i),
(ii), and (iii) and (i$'$), (ii$'$), and (iii$'$) in the proof of
Theorem~\ref{any} hold, and so
\[
e_{L} ^{L} \BBC W_L \cong \Ind_{Z_W(c)} ^{ N_W(W_L)} (\widetilde{\varphi}_c)
\quad \text{and}\quad A_{X_L} \cong \Ind_{Z_W (c)} ^{ N_W(W_L)} (\epsilon
\alpha_c \widetilde{\varphi}_c).
\]
We then apply $\Ind_{N_W(W_L)}^W$ to both sides of both equations and the
theorem follows from Proposition~\ref{pro:induct} by transitivity of
induction. The argument in this section is complicated by the fact that the
subgroup $N_L$ is not necessarily contained in $Z_W(c)$.

In case $W$ is a symmetric group, it was shown in~\ref{lem:stab} that
$\widetilde{\varphi}_c$ is the trivial extension of $\varphi_c$. In the
general case, this is no longer so. Formulas for $\widetilde{\varphi}_c$ are
given in the proof of Lemma~\ref{lem:i} below. Notice that $c$ is an
involution if and only if $l_1=1$ for $1\leq i\leq p$, and that in this case
$\varphi_c$ is the sign character of $Z_{W_L}(c)$ and
$\widetilde{\varphi}_c$ is the trivial extension of $\varphi_c$ to $Z_W(c)$.

Although $N_L$ is not necessarily contained in $Z_W(c)$, Konvalinka,
Pfeiffer, and R\"over \cite{konvalinkapfeifferroever:centralizers} have
shown that $Z_{W_L}(c)$ does have a complement, $N_c$, in $Z_W(c)$, and
$N_c$ is also a complement to $W_L$ in $N_W(W_L)$.

By \cite{howlett:normalizers}, the group $N_L$ is generated by $\{\, r_i,
g_j\mid 1\leq i\leq p-1,\, 1\leq j\leq p\,\}$, where $r_i$ and $g_i$ act on
$W_L$ as follows:
\begin{itemize}
\item If $n_i\ne n_{i+1}$, then $r_i=1$. If $n_i=n_{i+1}$, then $r_i$ acts
  on $L$ by
  \[
  r_i s_{j,k} r_i\inverse =
  \begin{cases}
    s_{i+1,k} &\text{if $j=i$}\\ s_{i,k} &\text{if $j=i+1$} \\ s_{j,k}
    &\text{otherwise.}
  \end{cases}
  \]
  In particular, $r_i$ is an involution,
  \[
  r_ic_jr_i=
  \begin{cases}
    c_{i+1} &\text{if $j=i$}\\ c_{i} &\text{if $j=i+1$} \\ c_{j}
    &\text{otherwise,}
  \end{cases}
  \]
  and $r_i$ is in $Z_W(c)$.
\item Either $g_i=1$ or $g_i$ acts on $L$ by
  \[
  g_i s_{j,k} g_i\inverse =
  \begin{cases}
    s_{i,n_i-k} &\text{if $j=i$} \\ s_{j,k} &\text{if $j\ne i$.}
  \end{cases}
  \]
  In particular, $g_i$ is an involution and if $g_i\ne1$, then
  \[
  g_ic_jg_i=
  \begin{cases}
    c_{i}\inverse &\text{if $j=i$}\\ c_{j} &\text{if $j\ne i$.}
  \end{cases}
  \]
\end{itemize}
Notice that if $W$ is a symmetric group, then $g_i=1$ for $1\leq i\leq p$.

For $1\leq i\leq p$, let $w_i$ denote the longest element in $W_{L_i}$ and
define
\[
h_i=
\begin{cases}
  1&\text{if $g_i=1$}\\ g_iw_i &\text{if $g_i\ne 1$.}
\end{cases}
\]
Then $h_i$ is in $Z_W(c_j)$ for $1\leq i,j\leq p$. It is shown in
\cite{konvalinkapfeifferroever:centralizers} that
\[
N_c = \langle r_i, h_j \mid 1\leq i\leq p-1,\, 1\leq j\leq p \rangle
\]
is a complement to $Z_{W_L}(c)$ in $Z_W(c)$ and to $W_L$ in $N_W(W_L)$.

As in \S\ref{arbitrary} define
\begin{align*}
  f_{i}^+&= {\textstyle \frac 1{n_i}} \sum_{k=0}^{n_i-1}
  \varphi_{i}(c_{i}^k) c_{i}^{-k} & f_{i}^-&= {\textstyle \frac 1{n_i}}
  \sum_{k=0}^{n_i-1} \epsilon(c_{i}^k) \varphi_i(c_{i}^k) c_{i}^{-k} \\
  f_L^+&= f_{_1}^+ f_{_2}^+ \dotsm f_{p}^+ & f_L^-&= f_{1}^- f_{2}^- \dotsm
  f_{p}^-.
\end{align*}
Then the lines $\BBC f_L^+$ and $\BBC f_L^-$ in $\BBC W$ are stable under
left and right multiplication by $Z_{W_L}(c)$ and afford the characters
$\varphi_c$ and $\epsilon \varphi_c$ of $Z_{W_L}(c)$, respectively. Because
$h_i$ centralizes $c_j$ for $1\leq i, j\leq p$, the proof of the second
statement in Lemma~\ref{lem:stab} applies word-for-word to $N_c$ and proves
the next lemma.

\begin{lemma} %%\label{lem:stab}
  The subgroup $N_c$ of $Z_W(c)$ centralizes the idempotents $f_L^+$ and
  $f_L^-$ in $\BBC Z_{W_L}(c)$.
\end{lemma}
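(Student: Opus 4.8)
The plan is to verify the claim on a generating set of $N_c$, namely on the elements $r_i$ ($1\leq i\leq p-1$) and $h_j$ ($1\leq j\leq p$), using the observation that the factors $f_1^\pm,\dots,f_p^\pm$ pairwise commute: each $f_k^\pm$ lies in $\BBC\langle c_k\rangle$, and distinct $c_k$ commute since they act on disjoint parts of the underlying set. Consequently, to see that conjugation by a given element $n$ of $N_c$ fixes the product $f_L^\pm=f_1^\pm\dotsm f_p^\pm$, it suffices to show that conjugation by $n$ permutes the set $\{f_1^\pm,\dots,f_p^\pm\}$. This is exactly the mechanism used in the second part of the proof of Lemma~\ref{lem:stab}, and the remark preceding the present lemma asserts that it carries over verbatim.

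First I would dispose of the generators $h_j$. By construction $h_j$ lies in $Z_W(c_k)$ for every $k$, so conjugation by $h_j$ fixes each $c_k$ and hence fixes each $f_k^\pm$, since $f_k^+=\frac1{n_k}\sum_{m=0}^{n_k-1}\varphi_k(c_k^m)c_k^{-m}$ and $f_k^-=\frac1{n_k}\sum_{m=0}^{n_k-1}\epsilon(c_k^m)\varphi_k(c_k^m)c_k^{-m}$ are polynomials in $c_k$. Therefore $h_j$ centralizes the product $f_L^\pm$.

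Next the generators $r_i$. If $r_i=1$ there is nothing to prove; otherwise $n_i=n_{i+1}$, and the displayed formula $r_ic_jr_i=c_{i+1},c_i,c_j$ (for $j=i$, $j=i+1$, and $j\neq i,i+1$ respectively) shows that conjugation by $r_i$ transposes $c_i$ and $c_{i+1}$ and fixes the remaining $c_k$. Because $n_i=n_{i+1}$ and $\zeta_{n_i}=\zeta_{n_{i+1}}$ is the single fixed primitive root of unity in play, we have $\varphi_i=\varphi_{i+1}$; likewise $\epsilon(c_i)=\epsilon(c_{i+1})$, both $n_i$-cycles having sign $(-1)^{n_i-1}$. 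Hence conjugation by $r_i$ sends $f_i^\pm\mapsto f_{i+1}^\pm$, $f_{i+1}^\pm\mapsto f_i^\pm$, and fixes the other factors, so it fixes the product $f_L^\pm$. Since $N_c=\langle r_i,h_j\mid 1\leq i\leq p-1,\ 1\leq j\leq p\rangle$ and every generator centralizes both $f_L^+$ and $f_L^-$, so does $N_c$.

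I expect no genuine obstacle here; the lemma is a short bookkeeping argument. The only points that require care are exactly the two structural inputs just cited: that $r_i$ is nontrivial only when the two type-$A$ blocks it swaps have equal rank (so that the defining characters $\varphi_i,\varphi_{i+1}$ and the sign values agree), and that $h_j$ centralizes \emph{every} $c_k$, not merely $c_j$ — the latter being the fact imported from \cite{konvalinkapfeifferroever:centralizers}. Granting these, the argument of Lemma~\ref{lem:stab} transfers word for word.
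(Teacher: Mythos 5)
Your proof is correct and follows essentially the same route as the paper's: the paper simply observes that each $h_j$ centralizes every $c_k$ and then refers to the permutation-of-commuting-factors argument in Lemma~\ref{lem:stab} as applying ``word-for-word''; you have unpacked exactly that argument, checking the two generators separately. The one minor imprecision — writing $\varphi_i = \varphi_{i+1}$ although these are characters of different cyclic groups $\langle c_i\rangle$ and $\langle c_{i+1}\rangle$ — is harmless, since what you actually use is the correct statement $\varphi_i(c_i^k) = \varphi_{i+1}(c_{i+1}^k)$, which forces $r_i f_i^\pm r_i^{-1} = f_{i+1}^\pm$.
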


For $1\leq i\leq p$, define $a_i= a_{s_{i,1}} \dotsm a_{s_{i,l_i}}$. Set
$a_L=a_1\dotsm a_p$. Then $a_L$ is in $A_{X_L}$. The next lemma is the
analog of statements (i) and (i$'$) in the proof of Theorem~\ref{any}.

\begin{lemma}\label{lem:i}
  The lines $\BBC e_L^L f_L^+$ in $e_L^L \BBC W_L$ and $\BBC f_L^-a_L$ in
  $A_{X_L}$ are non-zero and $Z_W(c)$-stable. Let $\widetilde{\varphi}_c$ be
  the character of $Z_W(c)$ acting by right multiplication on the line $\BBC
  e_L^L f_L^+$. Then $\widetilde{\varphi}_c$ is an extension of $\varphi_c$,
  and the character of $Z_W(c)$ acting by left multiplication on the line
  $\BBC f_L^- a_L$ is $\epsilon\alpha_c\widetilde{\varphi}_c$.
\end{lemma}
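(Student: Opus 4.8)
The plan is to mirror the argument for the symmetric group case in Section~\ref{arbitrary}, but now carefully tracking how the extra generators $h_i$ of $N_c$ (which need not lie in $W_LN_\lambda$ as in the split symmetric case) act. First I would factor both lines along the product decomposition $W_L\cong W_{L_1}\times\dots\times W_{L_p}$. Since $e_L^L=e_{L_1}^{L_1}\dotsm e_{L_p}^{L_p}$ by Proposition~\ref{pro:red}, and $f_L^{\pm}=f_1^{\pm}\dotsm f_p^{\pm}$, we have $e_L^L f_L^+ = (e_{L_1}^{L_1}f_1^+)\dotsm(e_{L_p}^{L_p}f_p^+)$ and similarly $f_L^- a_L=(f_1^-a_1)\dotsm(f_p^-a_p)$, each $i$-th factor living in the type-$A$ component $W_{L_i}\cong S_{n_i}$. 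For each such factor, Lemma~\ref{invert} (applied inside $S_{n_i}$) gives that $f_i^+$ acts invertibly by right multiplication on $e_{L_i}^{L_i}$ and $f_i^-$ acts invertibly by left multiplication on $a_i$; multiplying across $i$ shows $f_L^+$ acts invertibly on $e_L^L$ and $f_L^-$ on $a_L$, hence $\BBC e_L^L f_L^+\ne 0$ and $\BBC f_L^- a_L\ne 0$.

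Next I would establish $Z_W(c)$-stability and compute the character. Recall $Z_W(c)=Z_{W_L}(c)\rtimes N_c$ with $N_c=\langle r_i,h_j\rangle$. Stability under $Z_{W_L}(c)$ and the fact that this factor acts through $\varphi_c$ (resp. $\epsilon\varphi_c$, twisted by $\alpha_c$ on $a_L$ — note $a_L$ spans a line in $A_{X_L}$ and $Z_{W_L}(c)$ acts on $A_{X_L}$ via $\epsilon_c\alpha_c$ times the descent-algebra character, exactly as in (i) of Theorem~\ref{any}) is inherited from the symmetric-group computations in Section~\ref{lambda=n}, done factor by factor. For the generators $r_i$: these permute the factors $c_i\leftrightarrow c_{i+1}$ (when $n_i=n_{i+1}$) and hence permute $\{f_i^+\}$ and $\{e_{L_i}^{L_i}\}$ and $\{a_i\}$ compatibly; since $\varphi_i=\varphi_{i+1}$ in that case, the argument of Lemma~\ref{lem:stab} shows $r_i$ centralizes $e_L^L f_L^+$ and fixes the relevant lines, contributing trivially to $\widetilde{\varphi}_c$ (and contributing $\epsilon\alpha_c$ on the $A$-side since $\alpha_c(r_i)=-1$ as in the paragraph before Theorem~\ref{any}). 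The new ingredient is the $h_i$. Here $h_i=g_iw_i$ when $g_i\ne 1$, where $g_ic_ig_i=c_i^{-1}$ and $w_i$ is the longest element of $W_{L_i}$; by Lemma~\ref{lem:w0} (applied inside $W_{L_i}$) $w_i e_{L_i}^{L_i} = (-1)^{l_i} e_{L_i}^{L_i}$ wait — more precisely $e_{L_i}^{L_i}w_i=(-1)^{|L_i|}e_{(L_i)^{w_i}}^{L_i}=(-1)^{l_i}e_{L_i}^{L_i}$ since $L_i$ is $w_i$-stable, and $w_i a_i = \pm a_{w_i\cdot}$. One then computes $e_{L_i}^{L_i}f_i^+\cdot h_i$ explicitly: conjugation by $g_i$ inverts $c_i$, so it sends $f_i^+=\frac1{n_i}\sum\varphi_i(c_i^k)c_i^{-k}$ to $\frac1{n_i}\sum\varphi_i(c_i^k)c_i^{k}=\frac1{n_i}\sum\varphi_i(c_i^{-k})c_i^{-k}=\overline{f_i^+}$, and combining with the $w_i$-factor and the invertibility of $f_i^+$ on $e_{L_i}^{L_i}$ yields $e_{L_i}^{L_i}f_i^+ h_i = \mu_i\, e_{L_i}^{L_i}f_i^+$ for an explicit scalar $\mu_i$ (a sign times a root of unity). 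This scalar is precisely the value $\widetilde{\varphi}_c(h_i)$, giving the promised formula for the extension, and the analogous computation on $\BBC f_L^- a_L$ produces $\epsilon\alpha_c\widetilde{\varphi}_c(h_i)$ once one checks that the extra signs from $\epsilon$ and $\alpha_c$ on $h_i$ combine correctly.

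The main obstacle will be the bookkeeping of signs and roots of unity in the $h_i$ computation: one must pin down $\widetilde{\varphi}_c(h_i)$ exactly, verify it is consistent with the group relations in $N_c$ (so that $\widetilde{\varphi}_c$ is genuinely a well-defined linear character of $Z_W(c)$, not merely defined on generators), and check that $\widetilde{\varphi}_c$ restricted to $Z_{W_L}(c)$ really is $\varphi_c$ and not a twist. Consistency of $\widetilde{\varphi}_c$ as a character follows once we know $Z_W(c)$ acts on the one-dimensional space $\BBC e_L^L f_L^+$ — which it does, because $e_L^L f_L^+$ is nonzero and $Z_W(c)$ normalizes the relevant structure (left multiplication by $e_L^L$, right multiplication by $W_L$, and $N_c$ acting by conjugation all preserving the line), so no separate relation-check is needed. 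Once Lemma~\ref{lem:i} is in hand, statements (ii)–(iii) — namely $e_L^L\BBC W_L = e_L^L f_L^+\BBC N_W(W_L)$ using $N_W(W_L)=W_L N_c = Z_{W_L}(c)W_L N_c$ and the invertibility of $f_L^+$, and the dimension count $\dim e_L^L\BBC W_L = |W_L:Z_{W_L}(c)| = |N_W(W_L):Z_W(c)|$ via Corollary~\ref{cor:dimA=dimE} — go through exactly as (ii$'$),(iii$'$) in Theorem~\ref{any}, and then $\Ind_{N_W(W_L)}^W$ together with Proposition~\ref{pro:induct} and transitivity of induction yields Theorem~\ref{thm:rel}.
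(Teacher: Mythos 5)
Your outline follows the paper's route: factor along $W_L\cong W_{L_1}\times\dots\times W_{L_p}$, use the $\lambda=(n)$ results from \S\ref{lambda=n} in each factor to get nonvanishing of the lines and the $Z_{W_L}(c)$-eigenvalues, handle $r_i$ exactly as in Lemma~\ref{lem:stab}, and single out the $h_i$ as the new ingredient. That much is right. But the two things you explicitly defer --- pinning down $\widetilde{\varphi}_c(h_i)$, and checking that on the $A$-side the signs from $\epsilon$ and $\alpha_c$ are compatible --- are not bookkeeping; they are the actual content of the lemma, and your sketch of the first of them is headed into unnecessary complication. The clean way to compute $\widetilde{\varphi}_c(h_i)$ is not to track $g_if_i^+g_i^{-1}=\overline{f_i^+}$ separately: since $w_ic_iw_i^{-1}=c_i^{-1}$ and $g_ic_ig_i^{-1}=c_i^{-1}$, the product $h_i=g_iw_i$ \emph{centralizes} every $c_j$ and hence centralizes $f_L^+$. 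Combined with $g_ie_L^Lg_i=e_L^L$ (Lemma~\ref{lem:restrict}(c)), this collapses $(e_L^Lf_L^+)\cdot h_i$ to $(e_L^Lw_i)f_L^+$, and Lemma~\ref{lem:w0}(b) applied in the $i$-th factor gives $e_L^Lw_i=(-1)^{l_i}e_L^L$. So $\widetilde{\varphi}_c(h_i)=(-1)^{l_i}$: the ``sign times a root of unity'' you were bracing for never appears --- it is only a sign, and seeing this cleanly is exactly the observation that $h_i$, not $g_i$, is the element to conjugate by.

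The $A$-side check is likewise not automatic. Because $h_i$ centralizes $L$ pointwise, $h_i\cdot f_L^-a_L=f_L^-a_L$, so you must show $\epsilon(h_i)\alpha_c(h_i)\widetilde{\varphi}_c(h_i)=1$, i.e.\ $\epsilon(h_i)\alpha_c(h_i)=(-1)^{l_i}$. Using that $\epsilon(n)\alpha_c(n)$ equals the sign of the permutation of $L$ induced by conjugation by $n$ (for $n\in N_L$), one gets $\epsilon(g_i)\alpha_c(g_i)=(-1)^{\lfloor l_i/2\rfloor}$, while $\alpha_c(w_i)=1$ and $\epsilon(w_i)=(-1)^{\binom{l_i+1}{2}}$; the product equals $(-1)^{l_i}$ only after checking the four residues of $l_i$ modulo $4$. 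Your proposal records neither of these computations, and in particular the remark that ``no separate relation-check is needed'' addresses the well-definedness of $\widetilde{\varphi}_c$ as a character but not the consistency of the two lines' characters, which is where the mod-$4$ analysis lives. So: same strategy as the paper, correct skeleton, but the two sign determinations that make the lemma true are missing, and the one you did sketch is more roundabout than the centralization argument it should be.
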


\begin{proof}
  The argument in statement (i$'$) in \S\ref{arbitrary} applies verbatim to
  $f_L^+$ and shows that $f_L^+$ acts invertibly by right multiplication on
  $e_L^L$ and so $e_L^L f_L^+\ne0$. Similarly, the argument in Lehrer and
  Solomon \cite[\S3]{lehrersolomon:symmetric} shows that $f_i^-$ acts as a
  unit on $a_i$ for $1\leq i\leq p$. It follows that $f_L^-$ acts invertibly
  by left multiplication on $a_L$ and so $f_L^-a_L\ne0$. Therefore,
  $Z_{W_L}(c)$ acts on $\BBC e_L^L f_L^+$ and $\BBC f_L^-a_L$ by $\varphi_c$
  and $\epsilon \varphi_c$, respectively, and $\widetilde{\varphi}_c$
  extends $\varphi_c$.

  If $n_i=n_{i+1}$, then as in \S\ref{arbitrary} we have
  \begin{equation}
    \label{eq:x3}
    (e_L^L f_L^+)\cdot r_i= (e_L^L \cdot r_i) f_L^+ = (r_i e_L^L r_i) f_L^+
    = e_L^L f_L^+  
  \end{equation}
  and
  \begin{multline}
    \label{eq:x4}
    r_i\cdot f_L^-a_L= f_L^-(r_i\cdot a_L) = f_L^-\cdot a_1\dotsm a_{i-1} a_{i+1}
    a_i a_{i+2} \dotsm a_p \\ = (-1)^{l_i^2} f_L^-\cdot a_L = (-1)^{l_i}
    f_L^-\cdot a_L. 
  \end{multline}
  If $h_i\ne 1$, then
  \begin{equation}
    \label{eq:x5}
    e_L^L f_L^+ \cdot h_i= (e_L^L \cdot g_iw_i) f_L^+ = (g_i e_L^L g_iw_i)
    f_L^+ = (e_L^L w_i) f_L^+ = (-1)^{l_i} e_L^L f_L^+,     
  \end{equation}
  (the last equality follows from Lemma~\ref{lem:w0}), and
  \begin{equation}
    \label{eq:x6}
    h_i\cdot f_L^-a_L= f_L^-(h_i\cdot a_L) = f_L^- a_L,
  \end{equation}
  since $h_i$ centralizes $L$. It follows that $Z_W(c)$ acts on the lines
  $\BBC e_L^L f_L^+$ and $\BBC f_L^-a_L$. Moreover, from (\ref{eq:x3}) and
  (\ref{eq:x5}) we see that
  \[
  \widetilde{\varphi}_c(r_i) =1\quad\text{and}\quad
  \widetilde{\varphi}_c(h_i) =(-1)^{l_i}.
  \]

  To complete the proof we need to show that $Z_W(c)$ acts on the line $\BBC
  f_L^- a_L$ as $\epsilon\alpha_c\widetilde{\varphi}_c$.

  For $w$ in $Z_{W_L}(c)$ we have $\alpha_c(w)=1$ and
  $\widetilde{\varphi}_c(w) = \varphi_c(w)$.  Hence
  \[
  w\cdot f_L^-a_L= \epsilon(w) \varphi_c(w) \cdot f_L^- a_L= \bigl(\epsilon
  \alpha_c \widetilde{\varphi}_c\bigr)(w) \, f_L^- a_L.
  \]

  For $n$ in $N_L$ it is shown in
  \cite[Lemma~2.1]{douglasspfeifferroehrle:inductive} that $\epsilon(n)
  \alpha_c(n)$ is the sign of the permutation of $L$ induced by conjugation
  by $n$.

  Suppose next that $1\leq i\leq p-1$ and $n_i=n_{i+1}$. Then the sign of
  the permutation of $L$ induced by conjugation by $r_i$ is $(-1)^{l_i}$ and
  so $\epsilon(r_i) \alpha_c(r_i)= (-1)^{l_i}$.  Therefore,
  using~(\ref{eq:x4}) we see that
  \[
  r_i\cdot f_L^-a_L= (-1)^{l_i} f_L^- a_L = \bigl(\epsilon \alpha_c
  \widetilde{\varphi}_c\bigr)(r_i) \, f_L^- a_L.
  \]

  Finally, suppose that $1\leq i\leq p$ and that $g_i\ne 1$. Then the sign
  of the permutation of $L$ induced by conjugation by $g_i$ is $1$ if
  $l_i\equiv 0,1\pmod 4$ and $-1$ if $l_i\equiv 2,3\pmod 4$. Thus,
  $\epsilon(g_i) \alpha_c(g_i)= 1$ if $l_i\equiv 0,1\pmod 4$ and
  $\epsilon(g_i) \alpha_c(g_i)= -1$ if $l_i\equiv 2,3\pmod 4$.  Since
  $\ell(w_i)=\binom{l_i+1}{2}$, it follows that $\epsilon(w_i)=1$ if
  $l_i\equiv 0,3\pmod 4$ and $\epsilon(w_i)=-1$ if $l_i\equiv 1,2\pmod
  4$. Hence $\epsilon(g_i) \alpha_c(g_i) \epsilon(w_i)= (-1)^{l_i}$. Because
  $\alpha_c(w_i)=1$ we conclude that
  \begin{equation*}
    \label{eq:x2}
    \epsilon(h_i) \alpha_c(h_i)= \epsilon(g_i)\epsilon(w_i) \alpha_c(g_i)
    \alpha_c(w_i) = (-1)^{l_i} = \widetilde{\varphi}_c(h_i).
  \end{equation*}
  Therefore, using ~(\ref{eq:x6}) we see that
  \[
  h_i\cdot f_L^-a_L= f_L^- a_L = \bigl(\epsilon \alpha_c
  \widetilde{\varphi}_c\bigr)(h_i) \, f_L^- a_L.
  \]
  This completes the proof of the lemma.
\end{proof}

Because $f_\lambda^+$ acts invertibly on $e_L^L$, $N_c$ acts on the line
$\BBC e_L^L f_L^+$ by scalars, and $N_W(W_L)=N_cW_L$, we see that
\begin{equation}
  \label{eq:iie}
  e_L^L \BBC W_L= e_L^L f_L^+ \BBC W_L= e_L^L f_L^+ \BBC N_cW_L= e_L^L f_L^+
  \BBC N_W(W_L) .  
\end{equation}
Lehrer and Solomon \cite[Proposition 4.4(ii)]{lehrersolomon:symmetric} have
shown that $A_{X_L}= \BBC W_L a_L$. Thus, because $f_\lambda^-$ acts
invertibly on $a_L$, $N_c$ acts on the line $\BBC f_L^-a_L$ by scalars, and
$N_W(W_L)=W_LN_c$, we see that
\begin{equation}
  \label{eq:iia}
  A_{X_L}= \BBC W_L a_L= \BBC W_L f_L^- a_L= \BBC W_LN_c f_L^- a_L= \BBC
  N_W(W_L) f_L^- a_L.  
\end{equation}
Equations~(\ref{eq:iie}) and~(\ref{eq:iia}) are the analogs of statements
(ii) and (ii$'$) in the proof of Theorem~\ref{any}. The dimension
computation in the proof of statement (iii$'$) now applies to show that the
analogs of statements (iii) and (iii$'$) both hold in the present situation:
The multiplication maps
\begin{equation}
  \label{eq:iii}
  \BBC e_L^Lf_L^+ \otimes_{\BBC Z_W(c)} \BBC N_W(W_L) \to e_L^L\BBC W_L
  \quad\text{and}\quad \BBC N_W(W_L) \otimes_{\BBC Z_W(c)} \BBC f_L^-a_L \to
  A_{X_L}  
\end{equation}
are isomorphisms of $\BBC N_W(W_L)$-modules.

It follows from~(\ref{eq:iii}) and Lemma~\ref{lem:i} that
\[
e_{L} ^{L} \BBC W_L \cong \Ind_{Z_W(c)} ^{ N_W(W_L)} (\widetilde{\varphi}_c)
\quad \text{and}\quad A_{X_L} \cong \Ind_{Z_W (c)} ^{ N_W(W_L)} (\epsilon
\alpha_c \widetilde{\varphi}_c).
\]
Apply $\Ind_{N_W(W_L)}^W$ to both sides of these last two equations and use
transitivity of induction to get
\[
\Ind_{N_W(W_L)}^W \left(e_{L} ^{L} \BBC W_L\right) \cong \Ind_{Z_W(c)} ^{ W}
(\widetilde{\varphi}_c)
\]
and
\[
\Ind_{N_W(W_L)}^W \left(A_{X_L} \right)\cong \Ind_{Z_W (c)} ^{W} (\epsilon
\alpha_c \widetilde{\varphi}_c).
\]
By Proposition~\ref{pro:induct} and Corollary~\ref{cor:conc} we have
\[
E_\lambda \cong \Ind_{Z_W(c)} ^{ W} (\widetilde{\varphi}_c)
\quad\text{and}\quad A_\lambda \cong \Ind_{Z_W (c)} ^{W} (\epsilon \alpha_c
\widetilde{\varphi}_c).
\]
This completes the proof of the theorem.

%%%%%%%%%%%%%%%%%%%%%%%%%%%%%%%%%%%%%%%%%%%%%%%%%%%%%%%%%%%%%%%%%%%%%%
%%%%%%%%%%%%% Acknowledgments
%%%%%%%%%%%%%%%%%%%%%%%%%%%%%%%%%%%%%%%%%%%%%%%%%%%%%%%%%%%%%%%%%%%%%%

\bigskip {\bf Acknowledgments:} The authors would like to acknowledge
support from the DFG-priority programme SPP1489 ``Algorithmic and
Experimental Methods in Algebra, Geometry, and Number Theory.''  Part of the
research for this paper was carried out while the authors were staying at
the Mathematical Research Institute Oberwolfach supported by the ``Research
in Pairs'' programme. The second author wishes to acknowledge support from
Science Foundation Ireland.

%%%%%%%%%%%%%%%%%%%%%%%%%%%%%%%%%%%%%%%%%%%%%%%%%%%%%%%%%%%%%%%%%%%%%%
%%%%%%%%%%%%% bibliography
%%%%%%%%%%%%%%%%%%%%%%%%%%%%%%%%%%%%%%%%%%%%%%%%%%%%%%%%%%%%%%%%%%%%%%

\bigskip

\bibliographystyle{plain}
%%\bibliography{matts} 

%%%%%%%%%%%%%%%%%%%%%%%%%%%%%%%%%%%%%%%%%%%%%%%%%%%%%%%%%%%%%%%%%%%%%%
%%%%%%%%%%%%%%%%%%%%%%%%%%%%%%%%%%%%%%%%%%%%%%%%%%%%%%%%%%%%%%%%%%%%%%

\end{document}